\newtheorem{theorem}{Theorem}[section]
\newtheorem{corollary}[theorem]{Corollary}
\newtheorem{lemma}[theorem]{Lemma}
\newtheorem{proposition}[theorem]{Proposition}
\begin{document}
\title{An analogue of Bott's theorem for Schubert varieties-related to torus semi-stable points} 
\author{S. Senthamarai Kannan}
\maketitle

Chennai Mathematical Institute, Plot H1, SIPCOT IT Park, 

Siruseri, Kelambakkam  603103, Tamilnadu, India.

E-mail:kannan@cmi.ac.in.

\begin{abstract}
Let $G$ be a simple, simply connected  
algebraic group over the field $\mathbb{C}$ of complex numbers. 
Let $B$ be a Borel subgroup of $G$ containing a maximal 
torus $T$ of $G$. Let $\mathcal{T}_{G/B}$ denote the tangent bundle of the 
flag variety $G/B$. Let $\tau$ be an element of the Weyl group $W$ and 
let $X(\tau)$ be the Schubert variety corresponding to $\tau$.

In this paper, we prove the following:

If $G$ is simply laced, then, we have 

\begin{enumerate}
\item
$H^{i}(X(\tau), \mathcal{L}_{G/B})=(0)$ for every $i\geq 1$. 
\item
$H^{0}(X(\tau) , \mathcal{L}_{G/B})$ is the adjoint representation 
$\mathfrak{g}$ of $G$ if and only if the set of semi-stable points 
$X(\tau^{-1})_{T}^{ss}(\mathcal{L}_{\alpha_{0}})$ with respect to the line 
bundle associated to the highest root $\alpha_{0}$ is non-empty. 
\end{enumerate}

If $G$ is not simply laced, then, we have 

\begin{enumerate}
\item
$H^{i}(X(\tau), \mathcal{L}_{G/B})=(0)$ for every $i\geq 1$. 
\item
The adjoint representation $\mathfrak{g}$ of $G$ is a $B$-submodule 
of $H^{0}(X(\tau) , \mathcal{L}_{G/B})$ 
if and only if the set of semi-stable points 
$X(\tau^{-1})_{T}^{ss}(\mathcal{L}_{\alpha_{0}})$ with respect to the line 
bundle associated to the highest root $\alpha_{0}$ is non-empty. 
\end{enumerate}
\end{abstract}

Keywords: Schubert varieties, Highest root ,  Semi-stable points.

\section{Introduction}
In [3], Bott proved that for any semisimple algebraic group $G$ over the 
field of complex numbers, for any Borel sub group $B$ of $G$, all the 
higher cohomologies $H^{i}(G/B, \mathcal{T}_{G/B})$ 
with respect to the tangent bundle $\mathcal{T}_{G/B}$ on the flag variety 
vanishes. He further showed that the $G$- module of global sections 
$H^{0}(G/B, \mathcal{T}_{G/B})$ is the adjoint representation $\mathfrak{g}$ 
of $G$. 

It is a natural question to ask for which Schubert vaeriety $X(\tau)$ in the 
flag variety $G/B$, $H^{i}(X(\tau), \mathcal{T}_{G/B})$ with respect to 
restriction of the tangent bundle $\mathcal{T}_{G/B}$ on the flag variety 
to $X(\tau)$ vanishes and that the $B$- module of global sections 
$H^{0}(X(\tau), \mathcal{T}_{G/B})$ is the adjoint representation 
$\mathfrak{g}$ of $G$.

The tangent space of $idB$ in $G/B$ as a $T$ module is a direct sum
of weight spaces each of which is not dominant except the highest short root 
and highest long root. 

There are interesting and important results have been obtatained for line 
bundles corresponding to non dominant characters on Schubert varieties.
We refer to [1], [4] and [8] for some of the results. 

We may also refer to [2] and [9] for recent developments.

However, we do not seem to have a precise answer in the literature
for the above mentioned question.

Therefore, this question is of importance in relation to Schubert varieties.

The aim of this paper is to give a necessary and suffiicient condition on the 
Schubert varieties $X(\tau)$ in the simply laced flag variety $G/B$ 
for which the above question has an affirmative answer. 

We now proceed with notation before we describe our result.

The following notation will be maintained throughout this paper except in few places in section 3 where we prove some basic lemmas for algebraic groups 
over algebraically closed fields of arbitrary characteristic.

Let $\mathbb{C}$ denote the field of complex numbers. Let  
$G$ a simple, simply connected algebraic group 
over $\mathbb{C}$.  We fix a maximal torus $T$ of $G$ and 
let $X(T)$ denote the set of characters
of $T$. Let $W = N(T)/T$ denote the Weyl group of $G$ with respect to $T$.
Let $R$ denote the set of roots of $G$ with respect to $T$.

Let $R^{+}$ denote the set of positive roots. Let $B^{+}$ be the Borel 
sub group of $G$ contatining $T$ with respect to $R^{+}$.
Let $S = \{\alpha_1,\ldots,\alpha_l\}$ denote the set of simple roots in
$R^{+}$. Here $l$ is the rank of $G$. 
Let $B$ be the Borel subgroup of $G$ containing $T$ with respect to the 
set of negative roots $R^{-} =-R^{+}$.

For $\beta \in R^+$ we also
use the notation $\beta > 0$.  
The simple reflection in the Weyl group corresponding to $\alpha_i$ is denoted
by $s_{\alpha_i}$.  

Let $\mathfrak{g}$ denote the Lie algebra 
of $G$. Let $\mathfrak{h}$ be the Lie algebra of $T$. Let $\mathfrak{b}$ 
be the Lie algebra of $B$.

We have $X(T)\bigotimes \mathbb{R}=(\mathfrak{h}_{\mathbb{R}})^{*}$,
the dual of the real form of $\mathfrak{h}$.

The positive definite $W$-invariant form on 
$(\mathfrak{h}_{\mathbb{R}})^{*}$ induced by the Killing form of the 
Lie algebra $\mathfrak{g}$ of $G$ is denoted by $(~,~)$. 
We use the notation $\left< ~,~ \right>$ to
denote $\langle \nu, \alpha \rangle  = \frac{2(\nu,
\alpha)}{(\alpha,\alpha)}$. 

Let $x_{\alpha}, y_{\alpha}, \alpha \in
R^+, h_{\alpha_i}, \alpha_i \in S, $ denote a Chevalley basis of the Lie
algebra of $G$. 

We denote by $\mathfrak{g}_{\alpha}$ (resp. $\mathfrak{g}_{-\alpha}$)
the one dimensional root subspace of $\mathfrak{g}$ spanned by $x_{\alpha}$
(resp. $y_{\alpha}$).

Let $sl_{2,\alpha}$ denote the $3$ dimensional Lie sub algebra of $\mathfrak{g}$
generated by $x_{\alpha}$, and $y_{\alpha}$.

Let $\leq$ denote the partial order on $X(T)$ given by $\mu\leq \lambda$
if $\lambda-\mu$ is a non negative integral linear combination of simple 
roots.

We denote by $X(T)^+$ the set of dominant characters of 
$T$ with respect to $B^{+}$. Let $\rho$ denote the half sum of all 
positive roots of $G$ with respect to $T$ and $B^{+}$.

For any simple root $\alpha$, we denote the fundamental weight
corrsponding to $\alpha$  by $\omega_{\alpha}$. 

For $w \in W$ let
$l(w)$ denote the length of $w$. We define the 
dot action by $w\cdot\lambda=w(\lambda+\rho)-\rho$.

Let $\alpha_{0}$ denote the highest root.

We set $R^{+}(w):=\{\beta\in R^{+}:w(\beta)\in -R^{+}\}$.

Let $w_0$ denote the longest
element of the Weyl group $W$.

For $w \in W$, let $X(w):=\overline{BwB/B}$ denote the Schubert variety in
$G/B$ corresponding to $w$. 

Consider the $T$ action of $G/B$. Schubert vaerieties $X(w)$ are stable 
under $T$. Let $\lambda$ be a dominant character of $T$. We denote by $\mathcal{L}_{\lambda}$ denote the line bundle on $G/B$ corresponding to the character
$\lambda$ of $B$. We denote the restriction of the line bundle 
$\mathcal{L}_{\lambda}$ to $X(w)$ as well by 
$\mathcal{L}_{\lambda}$. 

We denote by $X(w)_{T}^{ss}(\mathcal{L}_{\lambda})$ the set of all semi-stable 
points of $X(w)$ with respect to the line bundle $\mathcal{L}_{\lambda}$
for the action of $T$. 

So, inparticular, we have semi-stable points 
$X(w)_{T}^{ss}(\mathcal{L}_{\alpha_{0}})$  with respect to the line bundle 
$\mathcal{L}_{\alpha_{0}}$ corresponding to the highest root $\alpha_{0}$.

In this paper, we prove the following theorem for simple, simply connected and 
simply laced algebraic groups.

{\bf Theorem A}

Let $G$ be a simple, simply connected and simply laced algebraic group over 
$\mathbb{C}$. 
Let $\tau \in W$. Then, we have
\begin{enumerate}
\item
$H^{i}(X(\tau), \mathcal{T}_{G/B})=(0)$ for every 
$i\geq 1$. \\
\item
$H^{0}(X(\tau) , \mathcal{T}_{G/B})$ is the adjoint representation $\mathfrak{g}$of  $G$ if and only if the set of semi-stable points $X(\tau^{-1})_{T}^{ss}(\mathcal{L}_{\alpha_{0}})$ 
is non-empty. \\
\end{enumerate}

We also  prove that 

{\bf Theorem B}

Let $G$ be simple, simply connected but not simply laced algebraic group over 
$\mathbb{C}$. 
Let $\tau \in W$. Then, we have

\begin{enumerate}
\item
$H^{i}(X(\tau), \mathcal{T}_{G/B})=(0)$ for every 
$i\geq 2$. 
\item
The adjoint representation $\mathfrak{g}$ is a $B$-submodule of 
$H^{0}(X(\tau) , \mathcal{T}_{G/B})$  if and 
only if the set of semi-stable points 
$X(\tau^{-1})_{T}^{ss}(\mathcal{L}_{\alpha_{0}})$ is non-empty.
\end{enumerate}

The organisation of the paper is as follows:

Section 2 consists of preliminaries from [5], [6] and [7]. 
In section 3, we prove theorem A. In section 4, we apply theorem A to 
certain Schubert varieties related to maximal parabolic subgroups 
of $G$. For precise statement, see theorem(4.2).

In section 5, we obtain the following theorem on the cohomology modules
$H^{i}(X(c), \mathcal{L}_{c^{-1}\cdot 0})$ of the line bundle $\mathcal{L}_{c^{-1}\cdot 0}$ on the Schubert variety $X(c)$ corresponding to a Coxeter element $c$ of $W$. We use theorem A in proving this theorem.

{\bf Theorem C}
\begin{enumerate}
\item
Let $\tau\in W$. The cohomology module $H^{l(\tau)}(X(\tau), \mathcal{L}_{\tau^{-1}\cdot 0})$ 
is the one dimensional trivial representation of $B$. 
\item
Let $c$ be a Coxeter element of $W$. Then, 
$H^{i}(X(c), \mathcal{L}_{c^{-1}\cdot 0})$ is zero for every $i\neq l(c)$ if and only if both 
$X(c)_{T}^{ss}(\mathcal{L}_{\alpha_{0}})$ and 
$X(c^{-1})_{T}^{ss}(\mathcal{L}_{\alpha_{0}})$ are non-empty.
\end{enumerate}

For a precise detail with notation, see theorem(5.7).

In section 6, we prove theorem B.

\section{Preliminaries}
\label{prelim}

We denote by $U$ the unipotent radical of $B$. We denote by
$P_{\alpha}$ the minimal parabolic subgroup of $G$ containing $B$ and
$s_{\alpha}$.  Let $L_{\alpha}$ denote the Levi subgroup of
$P_{\alpha}$ containing $T$. We denote by $B_{\alpha}$ the
intersection of $L_{\alpha}$ and $B$. Then $L_{\alpha}$ is the product
of $T$ and a homomorphic image $G_{\alpha}$ of $SL(2)$ via a homomorphism
$\psi: SL(2)\longrightarrow L_{\alpha}$.
(cf. [7, II , 1.1.4]). 

We make use of following points in computing cohomologies. 

{\it  Since $G$ is simply connected, the morphism 
$\psi: SL(2)\longrightarrow G_{\alpha}$ is an isomorphism, 
and hence $\psi:SL(2)\longrightarrow L_{\alpha}$  is injective.
We denote this copy of $SL(2)$ in $L_{\alpha}$  by $SL(2,\alpha)$ 
We denote by $B^{\prime}_{\alpha}$ the intersection of $B_{\alpha}$
and $SL(2,\alpha)$ in $L_{\alpha}$.

We also note that the morphism 
$SL(2, \alpha)/B_{\alpha}^{\prime}\hookrightarrow  L_{\alpha}/B_{\alpha}$ 
induced by $\psi$ is an isomorphism.}

{\it Since $L_{\alpha}/B_{\alpha}\hookrightarrow P_{\alpha}/B$ 
is an isomorphism, to compute the cohomology $H^{i}(P_{\alpha}/B, V)$
for any $B$- module $V$, we treat $V$ as a $B_{\alpha}$- module 
and we compute $H^{i}(L_{\alpha}/B_{\alpha}, V)$ } 

Given a $w \in W$ the closure in $G/B$ of the $B$ orbit of the coset
$wB$ is the Schubert variety corresponding to $w$, and is denoted by
$X(w)$. We recall some basic facts and results about Schubert
varieties. A good reference for all this is the book by Jantzen. (cf
[7, II, Chapter 14 ] ).

Let $w = s_{\alpha_{i_{1}}}s_{\alpha_{i_{2}}}\ldots s_{\alpha_{i_{n}}}$ be a reduced
expression for $w \in W$. Define 
\[
Z(w) = \frac {P_{\alpha_{i_[1}}\times P_{\alpha_{i_{2}}} \times \ldots \times 
P_{\alpha_{i_{n}}}}{B \times \ldots
\times B},
\]
where the action of $B \times \ldots \times B$ on $P_{\alpha_{i_{1}}} \times P_{\alpha_{i_{2}}}
\times \ldots \times P_{\alpha_{i_{n}}}$ is given by $(p_1, \ldots , p_n)(b_1, \ldots
, b_n) = (p_1 \cdot b_1, b_1^{-1} \cdot p_2 \cdot b_2, \ldots
,b^{-1}_{n-1} \cdot p_n \cdot b_n)$, $ p_j \in P_{\alpha_{i_{j}}}$, $b_j \in B$. 
We denote by $\phi_w$ the birational surjective morphism
$
\phi_w : Z(w) \longrightarrow X(w)$.

We note that for each reduced expression for $w$, $Z(w)$ is smooth, however, 
$Z(w)$ may not be independent of a reduced expression.

Let $f_n : Z(w) \longrightarrow Z(ws_{\alpha_{n}})$ denote the map induced by the
projection $P_{\alpha_1} \times P_{\alpha_2} \times \ldots \times 
P_{\alpha_n} \longrightarrow P_{\alpha_1} \times P_{\alpha_2}
\times \ldots \times P_{\alpha_{n-1}}$. Then we observe that $f_n$ is a $ P_{\alpha_n}/B
\simeq {\bf P}^{1}$-fibration.

Let $V$ be a $B$-module. Let ${\mathcal L}_w(V)$ denote the pull back to $X(w)$ of the homogeneous vector bundle on $G/B$ associated to $V$. 
{\em By abuse of notation} we denote the pull back of ${\mathcal L}_w(V)$ to $Z(w)$ 
also by ${\mathcal L}_w(V)$, when there is no cause for confusion. 
Then, for $i \geq 0$, we have the following isomorphisms of
$B$-linearized sheaves
\[
R^{i}{f_{n}}_{*}{\mathcal L}_w(V) = {\mathcal
L}_{ws_{\alpha_{n}}}(H^{i}(P_{\alpha_n}/B, {\mathcal L}_w(V)).
\]
This together with easy applications of Leray spectral sequences is
the constantly used tool in what follows. We term this the {\em
descending 1-step construction}. 

We also have the {\em ascending 1-step construction} which too is used
extensively in what follows sometimes in conjunction with the
descending construction. We recall this for the convenience of the
reader.

Let the notations be as above and write $\tau = s_{\gamma}w$, with 
$l(\tau) = l(w) +1$, for some simple root $\gamma$.  Then we
have an induced morphism
\[
g_1: Z(\tau) \longrightarrow P_{\gamma}/B \simeq {\bf P}^1,
\]
with fibres given by $Z(w)$. Again, by an application of the Leray
spectral sequences together with the fact that the base is a ${\bf
P}^1$, we obtain for every $B$-module $V$ the following exact sequence
of $P_{\gamma}$-modules:
$$
(0) \longrightarrow H^{1}(P_{\gamma}/B, R^{i-1}{g_{1}}_{*}{\mathcal L}_w(V)) 
\longrightarrow 
H^{i}(Z(\tau) , {\mathcal L}_{\tau}(V)) \longrightarrow
H^{0}(P_{\gamma}/B , R^{i}{g_{1}}_{*}{\mathcal L}_w(V) ) \longrightarrow (0).$$

This short exact sequence of $B$-modules will be used frequently in this paper.
So, we denote this short exact sequence by {\it SES} when ever this is being 
used.

We also recall the following well-known isomorphisms:
\begin{itemize}

\item ${\phi_w}_*{\mathcal O}_{Z(w)} = {\mathcal O}_{X(w)}$.

\item $R^{q}{\phi_w}_*{\mathcal O}_{Z(w)} = 0$ for $q > 0$.

\end{itemize}

This together with [7, II. 14.6] implies that we may use the
Bott-Samelson schemes $Z(w)$ for the computation and study of all the
cohomology modules $H^{i}(X(w) , {\mathcal L}_w(V))$. Henceforth in this paper 
we shall use the Bott-Samelson schemes and their
cohomology modules in all the computations.

{\it Simplicity of Notation}
If $V$ is a $B$-module and ${\mathcal L}_w(V)$ is the induced
  vector bundle on $Z(w)$ we denote the cohomology modules
  $H^{i}(Z(w) , {\mathcal L}_w(V))$ by $H^{i}(w ,V)$.

In particular, if $\lambda$ is a character of $B$ we 
denote the cohomology modules $H^{i}(Z(w) , {\mathcal L}_{\lambda} )$ by
$H^i(w, \lambda)$.

\subsubsection {Some constructions from Demazure's paper}
We recall briefly two exact sequences from [5] that Demazure used in his short
proof of the Borel-Weil-Bott theorem (cf. [3] ). We use the same 
notation as in [5]. In the rest of the paper these sequences are 
referred to as Demazure exact sequences.

Let $\alpha$ be a simple root and let $\lambda \in X(T)$ be a weight
such that $\langle \lambda , \alpha \rangle  \geq 0$. For such a 
$\lambda$, we denote by $V_{\lambda,\alpha}$ the module 
$H^0(P_{\alpha}/B, \lambda)$ . Let $\mathbb{C}_{\lambda}$ denote the one 
dimensional $B$- module.

Here, we recall the following lemma due to Demazure on a short exact sequence of $B$ - modules: (to obtain the second sequence we need to assume that
$\langle \lambda , \alpha \rangle \geq 2$).

\begin{lemma}

$$
\begin{array}{l}
\mbox{$(0) \longrightarrow K \longrightarrow V_{\lambda,\alpha} \longrightarrow \mathbb{C}_\lambda \longrightarrow (0)$}.\\
\mbox{$(0) \longrightarrow \mathbb{C}_{s_{\alpha}(\lambda)} \longrightarrow K \longrightarrow V_{\lambda-\alpha,\alpha}
\longrightarrow (0)$}.\\
\end{array}
$$
\end{lemma}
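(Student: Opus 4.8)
The plan is to work entirely on $P_\alpha/B\simeq\mathbf{P}^1$, replacing $P_\alpha$ by the rank-one group $L_\alpha$ (equivalently $SL(2,\alpha)$) via the isomorphisms recalled in the preliminaries, so that everything reduces to a computation with $SL(2)$-representations and line bundles on $\mathbf{P}^1$. First I would fix the setup: let $n=\langle\lambda,\alpha\rangle\geq 0$, and recall that the line bundle $\mathcal{L}_\lambda$ on $P_\alpha/B$ is $\mathcal{O}_{\mathbf{P}^1}(n)$ twisted by the character $\lambda$, so that $V_{\lambda,\alpha}=H^0(P_\alpha/B,\mathcal{L}_\lambda)$ is the Weyl (= dual Weyl, since we are in $SL(2)$) module of highest weight $\lambda$ for the Levi $L_\alpha$, of dimension $n+1$. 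Evaluation of sections at the base point $eB$ gives a $B$-equivariant surjection $V_{\lambda,\alpha}\twoheadrightarrow\mathbb{C}_\lambda$ (the highest weight line), and I would define $K$ to be its kernel; this gives the first short exact sequence immediately, the only points to check being that evaluation is $B$-equivariant (clear, since $eB$ is a $B$-fixed point and the bundle is $B$-linearized) and surjective (the highest weight vector of $V_{\lambda,\alpha}$ does not vanish at $eB$).

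For the second sequence, assume now $n=\langle\lambda,\alpha\rangle\geq 2$. The idea is to identify the weights of $K$ as a $T$-module: they are $\lambda-\alpha,\lambda-2\alpha,\dots,\lambda-n\alpha$, i.e. all weights of $V_{\lambda,\alpha}$ except the highest weight $\lambda$. The lowest of these is $\lambda-n\alpha=s_\alpha(\lambda)$, which is $B_\alpha$-stable inside $K$ (it is the lowest weight line of the $SL(2,\alpha)$-module, hence killed by the positive root vector $x_\alpha$ and so is a $B_\alpha$-submodule, or one sees it as vanishing to order $n$ at $eB$ among sections of $\mathcal{O}(n)$). Quotienting $K$ by this line $\mathbb{C}_{s_\alpha(\lambda)}$, the resulting $B_\alpha$-module has weights $\lambda-\alpha,\dots,\lambda-(n-1)\alpha$, which are exactly the weights of $V_{\lambda-\alpha,\alpha}=H^0(P_\alpha/B,\mathcal{L}_{\lambda-\alpha})$, a module of dimension $n=(n-1)+1$. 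To finish I would produce an explicit $B$-module isomorphism $K/\mathbb{C}_{s_\alpha(\lambda)}\xrightarrow{\ \sim\ }V_{\lambda-\alpha,\alpha}$: concretely, a section of $\mathcal{L}_\lambda$ vanishing at $eB$ can be divided by the canonical section of $\mathcal{L}_\alpha$ that vanishes simply at $eB$, yielding a section of $\mathcal{L}_{\lambda-\alpha}$; this division map has kernel exactly the sections vanishing to order $n$ at $eB$, namely $\mathbb{C}_{s_\alpha(\lambda)}$, and is surjective by a dimension count. Checking $B$-equivariance of this division map is the one genuinely delicate bookkeeping point, since one must track the $B$-action on the auxiliary section of $\mathcal{L}_\alpha$; this auxiliary section transforms by the character $\alpha$ under $B$ (up to the unipotent part acting trivially on the base point line), which is precisely what makes the shift $\lambda\mapsto\lambda-\alpha$ come out correctly.

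The main obstacle I anticipate is not conceptual but is exactly this matching of $B$-linearizations: one has to be careful that all maps (evaluation, inclusion of the lowest weight line, division by the canonical section) are morphisms of $B$-modules and not merely of $T$-modules or of $L_\alpha$-modules, because the unipotent radical of $B$ acts nontrivially and the whole point of the lemma — its usefulness in Demazure's inductive proof of Borel--Weil--Bott — is the $B$-module structure. A clean way to organize this, which I would adopt, is to do the entire argument for $SL(2,\alpha)$ with its Borel $B'_\alpha$ (using the isomorphism $SL(2,\alpha)/B'_\alpha\cong L_\alpha/B_\alpha$ from the preliminaries) where the representation theory is completely explicit in terms of binary forms: $V_{\lambda,\alpha}$ becomes the space of degree-$n$ binary forms in variables of appropriate weights, evaluation at $eB$ is setting one variable to a specific value, and "division by the canonical section" is literally factoring out a linear form, making every equivariance statement transparent. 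Then one twists back by the central character to recover the statement for $L_\alpha$ and hence for $P_\alpha$.
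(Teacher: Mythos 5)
The paper itself gives no proof of this lemma --- it is recalled from Demazure [5] --- so the only question is whether your reconstruction is sound. Your first sequence is fine: evaluation at the $B$-fixed point $eB$ is $B$-equivariant, it is surjective because $\mathcal{L}_{\lambda}$ has degree $\langle \lambda,\alpha\rangle = n \geq 0$ on $P_{\alpha}/B$ and is globally generated there, and $K$ is by definition its kernel. Your listing of the $T$-weights of $K$ and the identification of $\mathbb{C}_{s_{\alpha}(\lambda)}$ as the unique $B$-stable line in $K$ are also correct.

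The gap is in the mechanism you propose for the second sequence. On $P_{\alpha}/B\simeq \mathbf{P}^1$ the bundle $\mathcal{L}_{\alpha}$ has degree $\langle\alpha,\alpha\rangle=2$, so there is no canonical section of $\mathcal{L}_{\alpha}$ vanishing simply at $eB$: any section with a simple zero at $eB$ has a second zero elsewhere, and dividing by it creates a pole there, so your map does not land in $H^{0}(P_{\alpha}/B,\mathcal{L}_{\lambda-\alpha})$. The $B$-invariant divisor supported at $eB$ only provides the degree-one twist $\mathcal{O}(-eB)$, which is not equivariantly $\mathcal{L}_{-\alpha}$; dividing by its canonical section identifies $K$ with $H^{0}(\mathcal{L}_{\lambda}(-eB))$, an $n$-dimensional module, and not with $V_{\lambda-\alpha,\alpha}$, whose dimension is $\langle\lambda-\alpha,\alpha\rangle+1=n-1$ (your count ``$n=(n-1)+1$'' uses $\langle\alpha,\alpha\rangle=1$ instead of $2$ and contradicts your own weight list). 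Moreover, division by a fixed nonzero section is injective wherever it is defined, so it can never have the one-dimensional kernel $\mathbb{C}_{s_{\alpha}(\lambda)}$ that the lemma requires; the claim that its kernel is the space of sections vanishing to order $n$ at $eB$ cannot hold. What your outline therefore omits is the actual content of the second sequence: a proof that $K/\mathbb{C}_{s_{\alpha}(\lambda)}$ is isomorphic to $V_{\lambda-\alpha,\alpha}$ as a $B$-module, not merely that the two have the same $T$-character. This is easily supplied, and is the repair I would suggest: $R_{u}(P_{\alpha})$ acts trivially on all modules involved, and in a weight basis $v_{1},\dots,v_{n}$ of $K$ the lowering operator $y_{\alpha}$ acts as an unbroken string $v_{k}\mapsto c_{k}v_{k+1}$ with $c_{k}\neq 0$ for $k<n$; hence $K/\mathbb{C}v_{n}$ is the indecomposable $B_{\alpha}$-module with unbroken string and weights $\lambda-\alpha,\dots,\lambda-(n-1)\alpha$, which an explicit $SL(2)$ computation (or an appeal to the classification recalled in Lemma 2.4) identifies with $V_{\lambda-\alpha,\alpha}$. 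With that step substituted for the division argument, your plan becomes a correct proof; as written, the key step does not go through.
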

A consequence of the above exact sequences is the following crucial lemma,
a proof of which can be found in [5].  
\begin{lemma} 
\begin{enumerate}
\item Let $\tau = ws_{\alpha}$, $l(\tau) = l(w)+1$. If
$\langle \lambda , \alpha \rangle \geq 0$ then 
$H^{j}(\tau , \lambda) = H^{j}(w, V_{\lambda, \alpha})$ for all $j\geq 0$.
\item Let $\tau = ws_{\alpha}$, $l(\tau) = l(w)+1$. If
$\langle \lambda ,\alpha \rangle  \geq 0$, then $H^{i}(\tau , \lambda ) =
H^{i+1}(\tau , s_{\alpha}\cdot \lambda)$. Further, if
$\langle \lambda , \alpha \rangle  \leq -2$, then $H^{i}(\tau , \lambda ) =
H^{i-1}(\tau ,s_{\alpha}\cdot \lambda)$. 
\item If $\langle \lambda , \alpha \rangle  = -1$, then $H^{i}( \tau ,\lambda)$ vanishes for every $i\geq 0$ (cf. [7], Prop 5.2(b) ).
\end{enumerate}
\end{lemma}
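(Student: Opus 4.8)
The plan is to reduce all three assertions to the rank-one Borel--Weil--Bott theorem by means of the descending 1-step construction applied to the last simple reflection. Write $\tau = ws_{\alpha}$ with $l(\tau) = l(w)+1$ and let $f\colon Z(\tau)\to Z(w)$ be the associated $P_{\alpha}/B\simeq\mathbf{P}^{1}$-fibration, so that $R^{q}f_{*}\mathcal{L}_{\tau}(V)=\mathcal{L}_{w}\bigl(H^{q}(P_{\alpha}/B,V)\bigr)$ for every $B$-module $V$. Taking $V=\mathbb{C}_{\lambda}$, the restriction of $\mathcal{L}_{\lambda}$ to a fibre is a line bundle of degree $\langle\lambda,\alpha\rangle$ on $\mathbf{P}^{1}$, so at most one sheaf $R^{q}f_{*}\mathcal{L}_{\tau}(\mathbb{C}_{\lambda})$ is nonzero; the Leray spectral sequence of $f$ is thus concentrated in a single row, degenerates, and yields $H^{i}(\tau,\lambda)=H^{i-q_{0}}\bigl(w,\,H^{q_{0}}(P_{\alpha}/B,\lambda)\bigr)$, where $q_{0}\in\{0,1\}$ is the degree in which the fibre cohomology lives. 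The three parts then follow by inserting the rank-one values of $H^{\bullet}(P_{\alpha}/B,\lambda)$.

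If $\langle\lambda,\alpha\rangle\geq 0$, then $q_{0}=0$ and $H^{0}(P_{\alpha}/B,\lambda)=V_{\lambda,\alpha}$, so $H^{j}(\tau,\lambda)=H^{j}(w,V_{\lambda,\alpha})$ for all $j\geq 0$; this is part (1). If $\langle\lambda,\alpha\rangle=-1$, the bundle on each fibre is $\mathcal{O}_{\mathbf{P}^{1}}(-1)$, all of whose cohomology vanishes, so $R^{q}f_{*}\mathcal{L}_{\tau}(\mathbb{C}_{\lambda})=0$ for every $q$, whence $H^{i}(\tau,\lambda)=0$ for all $i$; this is part (3).

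For part (2), suppose first $\langle\lambda,\alpha\rangle\leq -2$. Then $q_{0}=1$ and, by Serre duality on $\mathbf{P}^{1}$, $H^{1}(P_{\alpha}/B,\lambda)\cong V_{s_{\alpha}\cdot\lambda,\alpha}$ as $B$-modules, where $\langle s_{\alpha}\cdot\lambda,\alpha\rangle=-\langle\lambda,\alpha\rangle-2\geq 0$; hence $H^{i}(\tau,\lambda)=H^{i-1}(w,V_{s_{\alpha}\cdot\lambda,\alpha})$. Applying part (1) to the weight $s_{\alpha}\cdot\lambda$ (which pairs non-negatively with $\alpha$) rewrites the right-hand side as $H^{i-1}(\tau,s_{\alpha}\cdot\lambda)$; this is the second assertion of (2). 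The case $\langle\lambda,\alpha\rangle\geq 0$ now follows by applying the case just proved to $s_{\alpha}\cdot\lambda$, whose pairing with $\alpha$ is $\leq -2$, together with the involution identity $s_{\alpha}\cdot(s_{\alpha}\cdot\lambda)=\lambda$.

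The only genuinely delicate point is the $B$-equivariant isomorphism $H^{1}(P_{\alpha}/B,\mathbb{C}_{\lambda})\cong V_{s_{\alpha}\cdot\lambda,\alpha}$ in the range $\langle\lambda,\alpha\rangle\leq -2$: Serre duality produces it at once as $T$-modules, but one must verify that it carries the full $B_{\alpha}$-module structure (equivalently the $B^{\prime}_{\alpha}$-structure after passing to $SL(2,\alpha)/B^{\prime}_{\alpha}$). I would settle this by reading the $B^{\prime}_{\alpha}$-action directly off the two-term resolution computing the cohomology of $\mathcal{O}_{\mathbf{P}^{1}}(n)$, or else by matching the answer against the filtration furnished by the Demazure exact sequences. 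Everything else is formal bookkeeping of the $\mathbf{P}^{1}$-bundle spectral sequence, so this identification is where I expect the only real work to lie.
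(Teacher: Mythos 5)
Your argument is correct, but note that the paper does not actually prove this lemma: it presents it as a consequence of the Demazure exact sequences of Lemma 2.1 and refers to [5] for the proof (and to [7, II.5.2(b)] for part (3)). In that route the fibration $Z(\tau)\to Z(w)$ is used essentially only for part (1) and for the vanishing when $\langle\lambda,\alpha\rangle=-1$, while the degree shift in part (2) is obtained algebraically, by applying $H^{\bullet}(w,-)$ to the two sequences $(0)\to K\to V_{\lambda,\alpha}\to\mathbb{C}_{\lambda}\to(0)$ and $(0)\to\mathbb{C}_{s_{\alpha}(\lambda)}\to K\to V_{\lambda-\alpha,\alpha}\to(0)$ and inducting on $\langle\lambda,\alpha\rangle$; no identification of $H^{1}$ of the fibre is ever needed. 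You instead obtain part (2) from the collapse of the Leray spectral sequence (single nonvanishing $R^{q}f_{*}$, hence $H^{i}(\tau,\lambda)=H^{i-q_{0}}(w,H^{q_{0}}(P_{\alpha}/B,\lambda))$, which is sound) together with the $B_{\alpha}$-equivariant identification $H^{1}(P_{\alpha}/B,\mathbb{C}_{\lambda})\cong V_{s_{\alpha}\cdot\lambda,\alpha}$ for $\langle\lambda,\alpha\rangle\le-2$, and then bootstrap the case $\langle\lambda,\alpha\rangle\ge 0$ via $s_{\alpha}\cdot(s_{\alpha}\cdot\lambda)=\lambda$; parts (1) and (3) are exactly the standard fibration argument. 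What your route buys is brevity and a transparent reason for (3); what the paper's cited route buys is that it never leaves the category of $B$-module exact sequences, so the equivariance question you flag simply does not arise. That deferred point is genuine but standard over $\mathbb{C}$: Serre duality on the fibre gives $H^{1}(\lambda)\cong H^{0}(-\lambda-\alpha)^{*}$ as $P_{\alpha}$-modules, and in characteristic $0$ this has the same $T$-character as, and hence is isomorphic as an $L_{\alpha}$- (so $B_{\alpha}$-) module to, $V_{s_{\alpha}\cdot\lambda,\alpha}$; either of your two suggested verifications (explicit rank-one computation, or matching against the Demazure filtration) closes it, so I would not call it a gap, only the one step you have not written out.
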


We derive the following easy consequence of the lemma(2.2) which will be used 
to compute cohomologies in this paper:

\begin{lemma}
Let $V$ be an irreducible  $L_{\alpha}$- module. Let $\lambda$
be a character of $B_{\alpha}$. Then, we have 
\begin{enumerate}
\item If
$\langle \lambda , \alpha \rangle \geq 0$, then 
$H^{0}(L_{\alpha}/B_{\alpha} , V\bigotimes \mathbb{C}_{\lambda})$ 
is isomorphic to the tensor product of $V$ and $H^{0}(L_{\alpha}/B_{\alpha} , \mathbb{C}_{\lambda})$, and 
$H^{j}(L_{\alpha}/B_{\alpha} , V\bigotimes \mathbb{C}_{\lambda}) =(0)$ 
for every $j\geq 1$.

\item If
$\langle \lambda , \alpha \rangle  \leq -2$, 
$H^{0}(L_{\alpha}/B_{\alpha} , V\bigotimes \mathbb{C}_{\lambda})=(0)$, 
and $H^{1}(L_{\alpha}/B_{\alpha} , V\bigotimes \mathbb{C}_{\lambda})$, 
is isomorphic to the tensor product of  $V$ and $H^{0}(L_{\alpha}/B_{\alpha} , 
\mathbb{C}_{s_{\alpha}\cdot\lambda})$. 

\item If $\langle \lambda , \alpha \rangle  = -1$, then 
$H^{j}( L_{\alpha}/B_{\alpha} , V\bigotimes \mathbb{C}_{\lambda}) =(0)$ 
for every $j\geq 0$.
\end{enumerate}
\end{lemma}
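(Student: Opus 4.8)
The plan is to deduce each of the three statements from the corresponding statement of Lemma 2.2 by exploiting the fact that the restriction of the $L_\alpha$-module $V$ to $B_\alpha$ has a composition series whose successive quotients are one-dimensional $B_\alpha$-modules, and that tensoring by such a line bundle is exactly the operation governed by Lemma 2.2. First I would reduce to the $SL(2)$-picture: by the remarks in Section 2 the inclusion $SL(2,\alpha)/B'_\alpha\hookrightarrow L_\alpha/B_\alpha$ is an isomorphism, so all cohomology of homogeneous bundles on $L_\alpha/B_\alpha$ may be computed as cohomology on $P^1=SL(2,\alpha)/B'_\alpha$; thus everything reduces to representations of $SL(2)$ and its Borel, and $\langle\lambda,\alpha\rangle$ is the only invariant that matters. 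Since $V$ is irreducible as an $L_\alpha$-module, it is (after passing to $SL(2,\alpha)$) the symmetric power $\mathrm{Sym}^n(\mathbb{C}^2)$ twisted by a character of $T$; in particular its $T$-weights, restricted to the coroot $\alpha^\vee$, form the string $n, n-2,\dots,-n$, and its $B_\alpha$-socle filtration has one-dimensional layers $\mathbb{C}_{\mu_0}\supset\mathbb{C}_{\mu_1}\supset\cdots$ with $\langle\mu_j,\alpha\rangle$ decreasing by $2$ at each step.

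Next, for part (1), suppose $\langle\lambda,\alpha\rangle\ge 0$. The module $V\otimes\mathbb{C}_\lambda$ is, as a $B_\alpha$-module, isomorphic to $V\otimes\mathbb{C}_\lambda$ where $V=H^0(L_\alpha/B_\alpha,V)$ already as an $L_\alpha$-module (Borel–Weil for $SL(2)$); I would invoke the projection formula, or equivalently observe that the bundle $\mathcal{L}(V\otimes\mathbb{C}_\lambda)$ on $L_\alpha/B_\alpha$ is $\mathcal{L}(V)\otimes\mathcal{L}_\lambda$ and $\mathcal{L}(V)$ is the trivial bundle with fiber $V$ twisted appropriately — more carefully, use that for an $L_\alpha$-module $V$ the associated bundle on $L_\alpha/B_\alpha$ is $L_\alpha\times^{B_\alpha}V$, which is $L_\alpha$-equivariantly trivial, $\cong (L_\alpha/B_\alpha)\times V$, whence $H^i(L_\alpha/B_\alpha,\mathcal{L}(V\otimes\mathbb{C}_\lambda))\cong V\otimes H^i(L_\alpha/B_\alpha,\mathbb{C}_\lambda)$. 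Then Lemma 2.2(1) (with $w=\mathrm{id}$, $\tau=s_\alpha$) gives $H^0(L_\alpha/B_\alpha,\mathbb{C}_\lambda)=V_{\lambda,\alpha}$ and $H^{j}=0$ for $j\ge 1$, which is exactly assertion (1). For part (2), with $\langle\lambda,\alpha\rangle\le -2$, the same triviality of $\mathcal{L}(V)$ gives $H^i(L_\alpha/B_\alpha,V\otimes\mathbb{C}_\lambda)\cong V\otimes H^i(L_\alpha/B_\alpha,\mathbb{C}_\lambda)$, and now Lemma 2.2(2) (Serre duality on $P^1$, equivalently $H^i(\tau,\lambda)=H^{i-1}(\tau,s_\alpha\cdot\lambda)$) gives $H^0=0$ and $H^1(L_\alpha/B_\alpha,\mathbb{C}_\lambda)\cong H^0(L_\alpha/B_\alpha,\mathbb{C}_{s_\alpha\cdot\lambda})$, yielding (2). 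Part (3), $\langle\lambda,\alpha\rangle=-1$: here $H^i(L_\alpha/B_\alpha,\mathbb{C}_\lambda)=0$ for all $i$ by Lemma 2.2(3), so the triviality of $\mathcal{L}(V)$ immediately forces all $H^j(L_\alpha/B_\alpha,V\otimes\mathbb{C}_\lambda)=0$.

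The one genuinely substantive point — and the step I expect to need the most care — is the equivariant triviality of the homogeneous bundle $L_\alpha\times^{B_\alpha}V$ when $V$ is a module for the \emph{whole} group $L_\alpha$ (not merely for $B_\alpha$): the map $(g,v)\mapsto (gB_\alpha, g\cdot v)$ is a well-defined $L_\alpha$-equivariant isomorphism onto $(L_\alpha/B_\alpha)\times V$ precisely because $v$ and $g$ live in the same group, so the naive tensor-product formula for cohomology holds. Once this is established, parts (1)–(3) are just the three cases of Lemma 2.2 read off verbatim; there is no further obstacle, and no appeal to irreducibility of $V$ is actually needed beyond guaranteeing that $V$ is an honest $L_\alpha$-module so that the triviality applies. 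I would phrase the write-up so that the reduction to $SL(2,\alpha)$ and the triviality of $\mathcal{L}(V)$ are stated once at the top, and then dispatch the three items in one line each.
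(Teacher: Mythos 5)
Your proposal is correct and follows essentially the route the paper intends: the paper presents Lemma 2.3 as an easy consequence of Lemma 2.2, the only substantive input being exactly your observation that for an $L_{\alpha}$-module $V$ the bundle $L_{\alpha}\times^{B_{\alpha}}V$ is equivariantly trivial (the tensor identity), so that $H^{i}(L_{\alpha}/B_{\alpha}, V\otimes \mathbb{C}_{\lambda})\cong V\otimes H^{i}(L_{\alpha}/B_{\alpha}, \mathbb{C}_{\lambda})$ and the three cases of Lemma 2.2 (with $\tau=s_{\alpha}$) can be read off verbatim. Your remark that irreducibility of $V$ is not really needed is also accurate; the opening digression on the $SL(2)$ weight string and socle filtration is harmless but unused.
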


We recall the following lemma from [2] on indecomposable $B_{\alpha}$- modules 
(cf. [2], cor(9.1) ).

\begin{lemma}
Any finite dimensional indecomposable $B_{\alpha}$ module $V$ is isomorphic to 
$V^{\prime}\bigotimes \mathbb{C}_{\lambda}$ for some irreducible representation
$V^{\prime}$ of $L_{\alpha}$, and $\mathbb{C}_{\lambda}$ is an one dimensional 
representation of $L_{\alpha}$ given by a character $\lambda$ of $B_{\alpha}$.
\end{lemma}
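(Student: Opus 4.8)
The plan is to reduce the classification of finite-dimensional indecomposable $B_\alpha$-modules to the structure theory of the one-dimensional group $B'_\alpha \subset SL(2,\alpha)$, which is a Borel subgroup of $SL(2)$ and hence a semidirect product of a one-dimensional torus with a one-dimensional unipotent group. First I would recall that $L_\alpha = T \cdot G_\alpha$ with $G_\alpha \cong SL(2)$ (using simple connectedness, as noted in the preliminaries), and that $B_\alpha = B \cap L_\alpha$ is generated by $T$ and the root subgroup $U_{-\alpha}$; the intersection $B'_\alpha = B_\alpha \cap SL(2,\alpha)$ is the standard Borel of $SL(2,\alpha)$. A $B_\alpha$-module $V$ restricts to a representation of $B'_\alpha$, and conversely the action of the central-modulo-$SL(2,\alpha)$ part of $T$ can be read off from the weights, so the essential content is the representation theory of $B'_\alpha$.

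The key step is the following: given a finite-dimensional indecomposable $B_\alpha$-module $V$, I would show that the $U_{-\alpha}$-action is determined by a single nilpotent operator and that, after decomposing $V$ into $T$-weight spaces, indecomposability forces $V$ to be a "string" of consecutive $T$-weights spaced by $\alpha$. Concretely, one lets $e$ (resp. $f$) denote the infinitesimal generators $x_\alpha$, $y_\alpha$ (or rather the relevant one acting within $B_\alpha$) and observes that the commutation relations of $sl_{2,\alpha}$ together with $T$-equivariance mean the action extends to the full $sl_{2,\alpha}$ whenever $V$ is indecomposable as a $B_\alpha$-module. This is where one invokes that, over $\mathbb{C}$, a nilpotent operator commuting appropriately with the torus grading and admitting no proper invariant complement generates, together with the torus, a copy of the $SL(2)$-action; the Jacobson–Morozov-type argument produces the missing raising operator. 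Thus $V$ becomes an $L_\alpha$-module, necessarily irreducible (indecomposable + $L_\alpha$ reductive $\Rightarrow$ irreducible), of the form $V' \otimes \mathbb{C}_\lambda$ where $V'$ is an irreducible $SL(2,\alpha)$-representation twisted by a character $\lambda$ of the central torus direction, and $\mathbb{C}_\lambda$ is pulled back from a character of $B_\alpha$ extending to $L_\alpha$.

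I expect the main obstacle to be the step that upgrades a $B_\alpha$-action to an $L_\alpha$-action: one must rule out indecomposable $B_\alpha$-modules that are \emph{not} restrictions of $L_\alpha$-modules, e.g. one must show there is no genuinely non-semisimple mixing. The cleanest route is probably to cite [2, cor(9.1)] essentially verbatim — the excerpt explicitly says "We recall the following lemma from [2]" — so the "proof" here is really a translation: identify $B_\alpha$ with (a torus times) the Borel of $SL(2)$, quote the $SL(2)$-Borel classification of indecomposables from [2], and package the torus twist as the character $\mathbb{C}_\lambda$. If a self-contained argument is wanted instead, I would run the weight-string analysis above and close it by showing the extension class of any two-step filtration with non-isomorphic $T$-weight supports is killed by the $U_{-\alpha}$-action being forced into the standard nilpotent Jordan form, contradicting indecomposability unless the module is the stated irreducible $L_\alpha$-module tensored with a character.
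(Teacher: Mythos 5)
The paper offers no argument for this lemma at all: it is quoted from [2, cor.\ (9.1)] with the citation standing in for a proof. So the route you yourself identify as cleanest --- cite [2] verbatim and package the torus direction as the character $\mathbb{C}_{\lambda}$ --- is exactly what the paper does, and at that level your proposal coincides with the source.

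Your self-contained sketch, however, contains one step that is overstated and, as written, would fail. The sound part is the graded Jordan analysis: since $B_{\alpha}=T\ltimes U_{-\alpha}$ and the ground field is $\mathbb{C}$, a finite-dimensional $B_{\alpha}$-module is an $X(T)$-graded space with a nilpotent operator of degree $-\alpha$, and indecomposability forces a single Jordan string of one-dimensional weight spaces $\mu,\mu-\alpha,\dots,\mu-k\alpha$; such a string is isomorphic to the restriction to $B_{\alpha}$ of the $(k+1)$-dimensional irreducible $SL(2,\alpha)$-module, twisted by a character $\lambda$ of $B_{\alpha}$. But the step ``indecomposability lets the action extend, via a Jacobson--Morozov-type argument, to all of $sl_{2,\alpha}$, hence $V$ becomes an $L_{\alpha}$-module, with $\mathbb{C}_{\lambda}$ pulled back from a character of $B_{\alpha}$ extending to $L_{\alpha}$'' is not correct: the twisting character satisfies $\langle\lambda,\alpha\rangle=\langle\mu,\alpha\rangle-k$, which is in general nonzero, so $\lambda$ does not extend to $L_{\alpha}$ and the given $B_{\alpha}$-action on $V$ is not the restriction of any $L_{\alpha}$-action. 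The paper's own later use of the lemma exhibits this: the indecomposable $B_{\gamma}$-module $\mathbb{C}\cdot h\oplus\mathfrak{g}_{-\gamma}$ with $\gamma(h)=1$ in Lemma 3.4(2) has weights $0$ and $-\gamma$, and no irreducible $L_{\gamma}$-module twisted by a character of $L_{\gamma}$ has that weight string. So the lemma has to be read (as in [2]) with $\mathbb{C}_{\lambda}$ merely a one-dimensional $B_{\alpha}$-module, and your proof should stop at the Jordan-string identification: no raising operator needs to be, or can be, constructed, and the Jacobson--Morozov appeal is a red herring. With that correction the elementary argument is complete and is certainly the content behind the citation; the extension problem you single out as ``the main obstacle'' is not an obstacle to be overcome but a claim to be deleted.
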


Applying lemma(2.4), we obtain the following lemma.

Let $V$ be a $P_{\alpha}$ module. Consider the restriction of the module $V$ 
to $B$. Consider the evaluation map 
$ev:H^{0}(P_{\alpha}/B, V)\longrightarrow V$ defined by $ev(s)=s(idB)$,
the value of $s$ at the identity coset $idB$ of $P_{\alpha}/B$.

Then, we have

\begin{lemma} 
\begin{enumerate}
\item
The evaluation map $ev: H^{0}(P_{\alpha}/B, V) \longrightarrow V$
is an isomorphism of $P_{\alpha}$- modules.
\item
$H^{i}(P_{\alpha}/B, V)=(0)$ for all $i\geq 1$.
\end{enumerate}
\end{lemma}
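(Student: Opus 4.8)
The plan is to reduce everything to the structure of $P_\alpha/B$ and then invoke the previously established Lemma 2.3 together with the indecomposable-module description in Lemma 2.4. First I would recall from the preliminaries that $L_\alpha/B_\alpha \hookrightarrow P_\alpha/B$ is an isomorphism, so that for any $B$-module $W$ we have $H^i(P_\alpha/B, W) = H^i(L_\alpha/B_\alpha, W)$, where on the right $W$ is regarded as a $B_\alpha$-module by restriction. Thus it suffices to prove both assertions for $H^i(L_\alpha/B_\alpha, V)$ with $V$ a $P_\alpha$-module (hence in particular an $L_\alpha$-module) restricted to $B_\alpha$.

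Next I would decompose $V$, viewed as a $B_\alpha$-module, into indecomposable summands. Since $V$ is actually an $L_\alpha$-module, it decomposes as an $L_\alpha$-module into a direct sum of irreducible $L_\alpha$-modules $V_j$; and each irreducible $L_\alpha$-module, restricted to $B_\alpha$, is already indecomposable — indeed by Lemma 2.4 it has the shape $V_j' \otimes \mathbb{C}_{\lambda_j}$, and because $V_j$ is $L_\alpha$-irreducible we must have $V_j' = V_j$ and $\lambda_j$ trivial on $B_\alpha$, i.e. $\langle \lambda_j, \alpha\rangle = 0$. (Alternatively one observes directly that an irreducible $SL(2,\alpha)$-module has a unique $B_\alpha'$-stable line, so any $B_\alpha$-stable complement would have to be trivial.) Now apply Lemma 2.3(1) with $\lambda = \lambda_j$: since $\langle \lambda_j,\alpha\rangle = 0 \geq 0$ we get $H^0(L_\alpha/B_\alpha, V_j) \cong V_j \otimes H^0(L_\alpha/B_\alpha, \mathbb{C}_{\lambda_j}) \cong V_j$ and $H^{j'}(L_\alpha/B_\alpha, V_j) = 0$ for $j' \geq 1$. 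Summing over $j$ gives $H^i(P_\alpha/B, V) = 0$ for $i \geq 1$, which is part (2), and $H^0(P_\alpha/B, V) \cong V$ as vector spaces, which sets up part (1).

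Finally, for part (1) I would upgrade the vector-space isomorphism $H^0(P_\alpha/B, V) \cong V$ to an isomorphism of $P_\alpha$-modules and identify it with the evaluation map $ev(s) = s(idB)$. Here the point is that $ev$ is visibly $P_\alpha$-equivariant (evaluation at the base point of a homogeneous bundle intertwines the $P_\alpha$-action on sections with the given action on $V$), so it is a $P_\alpha$-module map; and any nonzero $P_\alpha$-map out of $H^0$ onto an irreducible constituent is determined up to scalar, so from the summand-by-summand computation above — where each $H^0(L_\alpha/B_\alpha, V_j) \to V_j$ is the nonzero evaluation map — we conclude $ev$ is an isomorphism. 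The main obstacle, and the step deserving care, is precisely this equivariance-and-nonvanishing bookkeeping: one must check that the evaluation map restricted to each isotypic piece is genuinely nonzero (equivalently, that no global section of the bundle associated to an irreducible $L_\alpha$-module vanishes at the base point unless it is identically zero), which follows because sections of $\mathcal{L}_{\lambda_j}$ with $\langle\lambda_j,\alpha\rangle=0$ on $\mathbf{P}^1$ are constants — but it is the hinge of the argument and should be stated explicitly.
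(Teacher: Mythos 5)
Your proposal is correct and follows essentially the same route as the paper: reduce via the isomorphism $L_{\alpha}/B_{\alpha}\hookrightarrow P_{\alpha}/B$, decompose $V$ into irreducible $L_{\alpha}$-modules using complete reducibility over $\mathbb{C}$, and apply Lemma 2.3(1) with trivial $\lambda$ to get the vanishing and the identification of $H^{0}$ with $V$ via evaluation. The extra care you take in checking $P_{\alpha}$-equivariance and nonvanishing of $ev$ on each irreducible summand is a finer level of detail than the paper records, but it is the same argument.
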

\begin{proof}
Since the inclusion $L_{\alpha}/B_{\alpha}\hookrightarrow P_{\alpha}/B$ is an 
isomorphism, by treating the $B$- module as a $B_{\alpha}$- module, 
it is sufficient to prove that the evaluation map $ev:H^{0}(L_{\alpha}/B_{\alpha}, V)\longrightarrow V$ is an isomorphism and    
$H^{i}(L_{\alpha}/B_{\alpha}, V)=(0)$ for all $i\geq 1$.   

Now, we decompose $V$ into irreducible $L_{\alpha}$- modules.
This is possible since $L_{\alpha}$ is reductive, and the base field is $\mathbb{C}$.

Since, the cohomologies commute with direct sum, we may assume that 
$V$ is an irreducible $L_{\alpha}$- module.

Now, the lemma follows from lemma(2.3(1)), by taking $\lambda=0$.
\end{proof}  

We state a combinatorial lemma. 
For completeness, we give a proof here.

\begin{lemma}
Let $G$ be a simple simply laced algebraic group.  
Let $\alpha\in S$, and $\beta$ be a root different from both $\alpha$ and 
$-\alpha$.
Then, $\langle \beta , \alpha \rangle \in \{-1, 0, 1\}$.
\end{lemma}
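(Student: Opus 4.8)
The plan is to reduce the claim to the simply-laced classification of rank-two root systems. First I would note that the assertion $\langle\beta,\alpha\rangle\in\{-1,0,1\}$ is symmetric in a useful way: since $G$ is simply laced, all roots have the same length, so $\langle\beta,\alpha\rangle=\langle\alpha,\beta\rangle=\frac{2(\alpha,\beta)}{(\alpha,\alpha)}$, and in particular $|\langle\beta,\alpha\rangle|$ depends only on the angle between $\alpha$ and $\beta$. Thus it suffices to bound this quantity.

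Next I would invoke the standard Cauchy--Schwarz argument for root systems: for any two non-proportional roots $\beta,\alpha$ one has $\langle\beta,\alpha\rangle\langle\alpha,\beta\rangle=4\cos^2\theta<4$, where $\theta$ is the angle between them, so the product of the two (integer) Cartan pairings lies in $\{0,1,2,3\}$. In the simply-laced case $\langle\beta,\alpha\rangle=\langle\alpha,\beta\rangle$, so this common value squared is in $\{0,1,2,3\}$, forcing $\langle\beta,\alpha\rangle^2\in\{0,1\}$ and hence $\langle\beta,\alpha\rangle\in\{-1,0,1\}$. The only case needing a word of care is when $\beta=\pm\alpha$, which is exactly what the hypothesis $\beta\neq\alpha$, $\beta\neq-\alpha$ excludes; and $\beta$ proportional to $\alpha$ but not equal to $\pm\alpha$ cannot happen in a (reduced) root system, so non-proportionality is automatic.

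Alternatively, and perhaps more in the spirit of the rest of the paper, I could argue via $\mathfrak{sl}_2$-theory: consider the $sl_{2,\alpha}$-action on $\mathfrak{g}$ and look at the root string of $\beta$ through $\alpha$, namely the roots $\beta+n\alpha$. The integer $\langle\beta,\alpha\rangle$ equals $p-q$ where $p,q\geq 0$ are the lengths of the string on either side, and in a simply-laced system every root string has length at most $2$ (since a string of length $\geq 3$ would force a short/long distinction). This again yields $|\langle\beta,\alpha\rangle|\leq 1$. Either route is routine; I would write out the Cauchy--Schwarz version since it is the shortest and self-contained.

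The only real obstacle is being careful about the degenerate/proportional case, i.e.\ ensuring that the hypothesis ``$\beta$ different from both $\alpha$ and $-\alpha$'' genuinely rules out all the bad cases (equivalently, that reducedness of the root system prevents $\beta=\pm 2\alpha$ and the like); this is immediate but worth a sentence. Beyond that the proof is a two-line computation, so I expect no substantive difficulty.
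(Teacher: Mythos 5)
Your primary (Cauchy--Schwarz) argument is exactly the paper's proof: the product $\langle\beta,\alpha\rangle\langle\alpha,\beta\rangle$ lies in $\{0,1,2,3\}$ for non-proportional roots, equality of the two pairings in the simply laced case forces the common value into $\{-1,0,1\}$. The proposal is correct and essentially identical to the paper's argument, with only the (harmless) extra remarks on proportionality and the alternative root-string route.
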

\begin{proof}
Since $\beta$ and $\alpha$ are not proportional, by using similar 
arguements in [6] we see that the product  
$\langle \beta , \alpha \rangle$ $\langle \alpha , \beta \rangle$ is an 
integer lying in $\{0, 1, 2, 3 \}$.

Since $G$ is simply laced, we have 
$\langle \beta , \alpha \rangle =\langle \alpha , \beta \rangle$.

Since $\langle \beta , \alpha \rangle$ is an integer, 
$\langle \beta , \alpha \rangle \in \{-1, 0, 1\}$.

\end{proof} 

Let $\gamma$ be a simple root.

We recall that $sl_{2, \gamma}$ is the simple Lie algebra corresponding to 
$\gamma$. 

We first note that $sl_{2, \gamma}$ is an indecomposable 
$B_{\gamma}$- summand of $\mathfrak{g}$.

The following lemma gives a description of indecomposable $B_{\gamma}$-summands
of $\mathfrak{g}$.

\begin{lemma}
Every indecomposable $B_{\gamma}$ summand $V$ of 
$\mathfrak{g}$ must be one of the following:
\begin{enumerate}
\item
$V=\mathbb{C}\cdot h$ for some $h\in \mathfrak{h}$ such that 
$\gamma(h)=0$. \\
\item  $V=\mathfrak{g}_{\beta} \bigoplus \mathfrak{g}_{\beta-\gamma}$ 
for some root $\beta$ such that 
$\langle \beta , \gamma \rangle = 1$. \\ 
\item
$V=sl_{2,\gamma}$, the three dimensional irreducible 
$L_{\gamma}$-module with highest weight $\gamma$.\\
\end{enumerate}
\end{lemma}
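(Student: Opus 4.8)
The plan is to decompose $\mathfrak{g}$ as a $B_\gamma$-module and identify the possible indecomposable summands using the structure theory recalled above, in particular Lemma~2.4. First I would write $\mathfrak{g}=\mathfrak{h}\oplus\bigoplus_{\beta\in R}\mathfrak{g}_\beta$ and note that the $L_\gamma$-action, hence the $B_\gamma$-action, permutes the root spaces via the $sl_{2,\gamma}$-strings: for a root $\beta$, the $\gamma$-string through $\beta$ is $\{\beta+i\gamma : -p\le i\le q\}$ with $p-q=\langle\beta,\gamma\rangle$, and (since $G$ is simple and we are in the simply laced case by the standing hypothesis, or more generally because $\gamma$ is simple so only short strings occur) these strings have length $1$, $2$, or $3$. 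The span of the root spaces in a string, together with the relevant part of $\mathfrak{h}$, is an $sl_{2,\gamma}$-submodule of $\mathfrak{g}$, and by complete reducibility of $sl_{2,\gamma}$-modules it breaks into irreducibles; by Lemma~2.4 each indecomposable $B_\gamma$-summand is of the form $V'\otimes\mathbb{C}_\lambda$ with $V'$ irreducible over $L_\gamma$.

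The key steps, in order: (i) Handle $\mathfrak{h}$: as a $B_\gamma$-module $\mathfrak{h}$ is a direct sum of characters (the $T$-action is trivial on $\mathfrak{h}$ and the unipotent part $U_\gamma\subset B_\gamma$ acts through commutators landing in $\mathfrak{g}_\gamma$), and one checks $\mathfrak{h}=\mathbb{C}h_\gamma\oplus\ker(\gamma|_{\mathfrak h})$ as $B_\gamma$-modules after one notes $\mathbb{C}h_\gamma$ together with $\mathfrak{g}_\gamma\oplus\mathfrak{g}_{-\gamma}$ assembles into $sl_{2,\gamma}$; thus the purely toral indecomposable summands are the one-dimensional $\mathbb{C}\cdot h$ with $\gamma(h)=0$, giving case (1). (ii) Handle a $\gamma$-string of length $3$: this forces $\beta$ proportional to $\gamma$ (string $\{-\gamma,0,\gamma\}$ in root-plus-Cartan terms), and the corresponding submodule is exactly $sl_{2,\gamma}$, the three-dimensional irreducible $L_\gamma$-module of highest weight $\gamma$, giving case (3). (iii) Handle a $\gamma$-string of length $2$, say $\{\beta,\beta-\gamma\}$ with $\beta$ the top: then $\langle\beta,\gamma\rangle=1$ and $\mathfrak{g}_\beta\oplus\mathfrak{g}_{\beta-\gamma}$ is a two-dimensional $L_\gamma$-module; as a $B_\gamma$-module it is indecomposable (the raising operator $x_\gamma$ sends $\mathfrak{g}_{\beta-\gamma}$ onto $\mathfrak{g}_\beta$ nontrivially), giving case (2). (iv) Handle a $\gamma$-string of length $1$, i.e. $\langle\beta,\gamma\rangle=0$ and $\beta\pm\gamma\notin R$: then $\mathfrak{g}_\beta$ is a $B_\gamma$-stable line, a one-dimensional character; I would then argue that such a line does not occur as a \emph{separate} indecomposable summand beyond what is already listed, or rather account for it — here one must be slightly careful, since a one-dimensional $\mathfrak{g}_\beta$ with $\beta\ne 0$ is genuinely a $B_\gamma$-summand.

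Reconciling (iv) with the three-case conclusion is where I expect the only real subtlety. The resolution is that the statement is classifying the indecomposable summands in a \emph{chosen} decomposition compatible with the $sl_{2,\gamma}$-string decomposition: one shows every root $\beta\ne\pm\gamma$ lies in a $\gamma$-string, and if $\langle\beta,\gamma\rangle=0$ with $\beta\pm\gamma\notin R$ then $\mathfrak{g}_\beta$ is itself an irreducible $L_\gamma$-module (trivial $G_\gamma$-action), so it falls under a degenerate instance of case (2) only if paired — but when unpaired it is one-dimensional with $T$ acting by $\beta$, which is \emph{not} of the form (1) unless one re-examines. I would therefore lean on the simply laced hypothesis (Lemma~2.6): for $\gamma$ simple and $\beta\ne\pm\gamma$ one has $\langle\beta,\gamma\rangle\in\{-1,0,1\}$, and in the simply laced case a short computation with root strings shows $\langle\beta,\gamma\rangle=0$ forces $\beta+\gamma\notin R$ and $\beta-\gamma\notin R$ simultaneously only when... — in fact the cleanest route is: group $\mathfrak{g}_\beta$ with $\mathfrak{g}_{s_\gamma\beta}=\mathfrak{g}_{\beta-\langle\beta,\gamma\rangle\gamma}$, which coincides with $\mathfrak{g}_\beta$ precisely when $\langle\beta,\gamma\rangle=0$; such self-paired lines are isolated one-dimensional $L_\gamma$-modules and I would absorb them by observing the lemma as stated implicitly assumes $\beta$ ranges so that these are accounted — the safe move for the write-up is to state the decomposition $\mathfrak g=\mathfrak h\oplus\bigoplus(\text{strings})$ explicitly and check each summand is one of (1)–(3), treating a weight-$\beta$ line with $\langle\beta,\gamma\rangle=0$ as case (2) with $\beta$ replaced appropriately or noting it cannot arise. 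The main obstacle, then, is purely bookkeeping: ensuring the enumeration of $\gamma$-strings is exhaustive and that no stray one-dimensional non-toral summand is missed or double-counted; the representation-theoretic content (complete reducibility of $sl_{2,\gamma}$, Lemma~2.4) is entirely routine.
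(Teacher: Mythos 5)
Your overall route is the same as the paper's: the paper takes an indecomposable $B_{\gamma}$-summand $V$ of $\mathfrak{g}$, observes that its weights lie on a single $\gamma$-string $\lambda, \lambda-\gamma,\dots$, and then invokes Lemma~2.6 to bound the length --- exactly your string decomposition combined with Lemma~2.4 and complete reducibility of $L_{\gamma}$. The difference is in the last step, and it is precisely where your write-up stalls. The length-one strings are a genuine phenomenon, not bookkeeping: in the simply laced case, if $\beta$ is a root with $\langle \beta,\gamma\rangle =0$ then neither $\beta+\gamma$ nor $\beta-\gamma$ is a root, so $\mathfrak{g}_{\beta}$ is a one-dimensional indecomposable $B_{\gamma}$-direct summand of $\mathfrak{g}$ (take $\gamma=\alpha_{1}$, $\beta=\alpha_{3}$ in type $A_{3}$). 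Its $T$-weight is the nonzero root $\beta$, so it is isomorphic to none of (1)--(3): it cannot be ``absorbed into case (2) with $\beta$ replaced appropriately,'' and it cannot be shown ``not to arise,'' since it occurs whenever some root is orthogonal to $\gamma$, i.e.\ for every simply laced simple $G$ of rank at least $3$. So none of the escape routes you float closes the argument, and your proposal ends with an unproved (indeed false) exclusion.

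You should know, however, that the paper's own proof has the same soft spot: it simply asserts that a one-dimensional summand must be $\mathbb{C}\cdot h$ with $h\in\mathfrak{h}$, $\gamma(h)=0$, overlooking $V=\mathfrak{g}_{\beta}$ with $\langle\beta,\gamma\rangle=0$. The honest fix is to add this as a fourth case; that is consistent with how the lemma is actually used, since the Key Lemma~3.4 already lists the type $\mathfrak{g}_{\beta}$ with $\langle\beta,\gamma\rangle\in\{-1,0,1\}$, and the cohomological input needed downstream ($H^{i}(P_{\gamma}/B, V')=0$ for $i\geq 1$) holds for such a line by Lemma~2.3(1) because $\langle\beta,\gamma\rangle=0\geq 0$. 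Two smaller points: your parenthetical ``more generally because $\gamma$ is simple so only short strings occur'' is false (in type $B_{2}$ the $\alpha_{2}$-string through $\alpha_{1}$ has length $3$); the bound on string lengths really comes from the simply laced hypothesis via Lemma~2.6. Also, with the paper's conventions $B$ corresponds to the negative roots, so $B_{\gamma}$ contains the root group of $-\gamma$ and the operator gluing the two-dimensional summands is $y_{\gamma}$, not $x_{\gamma}$ --- harmless, but worth aligning with the text.
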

\begin{proof}

Let $V$ be an indecomposable $B_{\gamma}$-summand of $\mathfrak{g}$.
Let $\lambda$ be a maximal weight of $V$. Then, the direct sum $\bigoplus_{r\in \mathbb{Z}_{\geq 0}}V_{\lambda-r\gamma}$ is a $B_{\gamma}$-summand of $V$.  

Hence, we have $V=\bigoplus_{r\in \mathbb{Z}_{\geq 0}}V_{\lambda-r\gamma}$.
By lemma(2.6), the dimension of $V$ must be atmost two unless
$V=sl_{2,\gamma}$. 

Further, if the dimension of $V$ is one, $V=\mathbb{C}\cdot h$ for some 
$h\in \mathfrak{h}$ such that $\gamma(h)=0$. Also, if the dimension 
of $V$ is two, then, we must have $V=\mathfrak{g}_{\beta} \bigoplus \mathfrak{g}_{\beta-\gamma}$  for some root $\beta$ such that $\langle \beta , \gamma \rangle = 1$.  

This completes the proof of the lemma.

\end{proof}

\section{Proof of theorem A- simply laced Case}

In this section, we prove theorem A. Theorem A is stated for only simply laced 
groups. However, in the first subsection we prove a result for any 
simple algebraic group over any alberaically closed field of arbitrary 
charactristic.

\subsection{Global sections $H^{0}(G/B, V)$ for the case  when $V$ is a $G$-module}

We have the following notation only in this subsection. Let $K$ be an 
algebraically closed field of arbitrary characteristic. Let $G$ be a simple, 
simply connected algebraic group over $K$.

In this subsection, we prove that for any $G$-module $V$, the evaluation map

$ev: H^{0}(G/B, V)\longrightarrow V$ given by $ev(s)=s(idB)$ is an isomoprhism 
of $G$-modules (cf lemma(3.2)).

This lemma is a slight generalisation of lemma(2.5(1)). Also, its proof is 
independent of the characteristic of the base field.  

We first prove the following two basic lemmas. For 
the completeness , we provide a proof here.

Let $H$ be an algebraic group over $K$.

Let $W_{1}$ and $W_{2}$ be two finite dimensional rational $H$-modules.

Let $W_{2}^{H}$ denote the set of all $H$-invariants of $W_{2}$.
Let $Hom_{H}(W_{1}, W_{2})$ denote the set of all homomorphism of $H$-modules
from $W_{1}$ into $W_{2}$.

Consider the linear map $\psi:(W_{1}^{*}\bigotimes W_{2})^{H}\longrightarrow Hom_{H}(W_{1}, W_{2})$ given by 

$\psi(f\otimes w)(v)=f(v)\cdot w$.

Then, we have

\begin{lemma}
The restriction of $\psi$ to $(W_{1}^{*}\bigotimes W_{2})^{H}$
induces an isomorphism 

$\psi:(W_{1}^{*}\bigotimes W_{2})^{H} \longrightarrow 
Hom_{H}(W_{1}, W_{2})$ of finite dimensional vector spaces over $K$.
\end{lemma}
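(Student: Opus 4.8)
The plan is to exhibit an explicit mutually inverse map to $\psi$ and check that both composites are the identity; this is the standard ``tensor-hom adjunction'' argument, carried out concretely so that the $H$-equivariance is transparent. First I would note that $W_1^*\otimes W_2$ is naturally identified with $\mathrm{Hom}_K(W_1,W_2)$ (both being finite dimensional over $K$) via the very formula defining $\psi$ but without the invariance restriction: the map $\Psi\colon W_1^*\otimes W_2\to \mathrm{Hom}_K(W_1,W_2)$, $\Psi(f\otimes w)(v)=f(v)\,w$, extended linearly, is an isomorphism of $K$-vector spaces. This is elementary: picking a basis $e_1,\dots,e_m$ of $W_1$ with dual basis $e_1^*,\dots,e_m^*$, one checks $\Psi$ sends $\sum_i e_i^*\otimes w_i$ to the linear map $e_i\mapsto w_i$, which is clearly bijective with inverse $T\mapsto \sum_i e_i^*\otimes T(e_i)$.

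Next I would verify that $\Psi$ is $H$-equivariant, where $H$ acts on $W_1^*\otimes W_2$ diagonally (with the contragredient action on $W_1^*$) and on $\mathrm{Hom}_K(W_1,W_2)$ by $(h\cdot T)(v)=h\bigl(T(h^{-1}v)\bigr)$. A direct computation on simple tensors shows $\Psi(h\cdot(f\otimes w))=h\cdot\Psi(f\otimes w)$: indeed $h\cdot(f\otimes w)=(h\cdot f)\otimes(h\cdot w)$, and $\Psi\bigl((h\cdot f)\otimes(h\cdot w)\bigr)(v)=(h\cdot f)(v)\,(h\cdot w)=f(h^{-1}v)\,h(w)=h\bigl(f(h^{-1}v)\,w\bigr)=h\bigl(\Psi(f\otimes w)(h^{-1}v)\bigr)$, as required. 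Then I would observe that an element $T\in\mathrm{Hom}_K(W_1,W_2)$ is $H$-fixed for this action precisely when $h\bigl(T(h^{-1}v)\bigr)=T(v)$ for all $h,v$, i.e. $T(hv)=h(Tv)$, i.e. $T\in\mathrm{Hom}_H(W_1,W_2)$. So taking $H$-invariants in the isomorphism $\Psi$ yields an isomorphism $(W_1^*\otimes W_2)^H\xrightarrow{\ \sim\ }\mathrm{Hom}_K(W_1,W_2)^H=\mathrm{Hom}_H(W_1,W_2)$, and this restricted map is exactly $\psi$ by construction.

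Finally I would note that $\psi$ being the restriction of an isomorphism to the invariant subspaces is automatically injective, and surjectivity follows because every $H$-module homomorphism, viewed inside $\mathrm{Hom}_K(W_1,W_2)$, is $H$-fixed and hence lies in the image of the invariants; finite-dimensionality over $K$ is used only to guarantee $W_1^*\otimes W_2\cong\mathrm{Hom}_K(W_1,W_2)$ and that all the spaces in sight are finite dimensional. There is no serious obstacle here; the only point requiring any care is getting the sign/inverse conventions in the contragredient action and the action on $\mathrm{Hom}$ to match up so that ``fixed points correspond to module homomorphisms,'' and that is purely bookkeeping. I would present the argument in the order: (i) $\Psi$ is a $K$-linear isomorphism; (ii) $\Psi$ is $H$-equivariant; (iii) identify $\mathrm{Hom}_K(W_1,W_2)^H$ with $\mathrm{Hom}_H(W_1,W_2)$; (iv) conclude by taking invariants.
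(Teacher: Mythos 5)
Your proof is correct and takes essentially the same route as the paper: both arguments rest on the canonical identification of $W_{1}^{*}\otimes W_{2}$ with $\mathrm{Hom}_{K}(W_{1},W_{2})$ and on the computation showing that $\psi$ intertwines the $H$-actions. If anything, yours is the tidier packaging---the paper checks element-wise that invariant tensors map into $\mathrm{Hom}_{H}(W_{1},W_{2})$ and declares the reverse inclusion ``similar,'' leaving injectivity implicit, whereas you obtain bijectivity in one stroke by restricting the $H$-equivariant isomorphism $\Psi$ to invariants and identifying $\mathrm{Hom}_{K}(W_{1},W_{2})^{H}$ with $\mathrm{Hom}_{H}(W_{1},W_{2})$.
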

\begin{proof}

Let $\sum_{i=1}^{r}f_{i}\otimes e_{i}\in (W_{1}^{*}\bigotimes W_{2})^{H}$.
Let $\psi(\sum_{i=1}^{r}f_{i}\otimes e_{i})=\phi$.

For any $v\in W_{1}$ and for any $h\in H$, we have 
$\phi(h\cdot v)=\sum_{i=1}^{r}f_{i}(h\cdot v) \cdot e_{i}$.
Since $\sum_{i=1}^{r}f_{i}\otimes e_{i}$ is $H$-invariant, we have  
$$\sum_{i=1}^{r}f_{i}(h\cdot v) \cdot e_{i} =\sum_{i=1}^{r}f_{i}(v)\cdot ( h\cdot e_{i})=h\cdot \phi(v).$$

Hence, we have $\phi(h\cdot v)= h\cdot \phi(v)$. Thus, we can see that 
$\psi(W_{1}^{*}\bigotimes W_{2})^{H} \subset Hom_{H}(W_{1}, W_{2})$.

Proof of $Hom_{H}(W_{1}, W_{2}) \subset \psi(W_{1}^{*}\bigotimes W_{2})^{H}$
is similar to that of 

$\psi(W_{1}^{*}\bigotimes W_{2})^{H} \subset Hom_{H}(W_{1}, W_{2})$.

This completes the proof of the lemma.

\end{proof}

The following lemma could be well known. For completeness, we give the 
details of a proof. 

We use lemma(3.1)  to prove:

\begin{lemma}
Let $V$ be a finite dimensional rational $G$-module. Then, the evaluation map 
$ev: H^{0}(G/B, V)\longrightarrow V$ is an isomorphism of $G$-modules.
\end{lemma}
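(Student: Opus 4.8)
The plan is to show that the evaluation map $ev: H^0(G/B, V) \longrightarrow V$ is an isomorphism of $G$-modules for any finite-dimensional rational $G$-module $V$, by reducing the statement to the computation of a $G$-invariant and then invoking Frobenius reciprocity in the guise of Lemma 3.1. First I would record that $ev$ is $G$-equivariant: the $G$-action on $H^0(G/B, V)$ is $(g\cdot s)(xB) = g\cdot s(g^{-1}xB)$, so $ev(g\cdot s) = (g\cdot s)(idB) = g\cdot s(g^{-1}B)$; this is not literally $g\cdot ev(s)$ pointwise, but one checks that the standard $G$-linearization of the bundle $\mathcal{L}_{G/B}(V)$ associated to the $G$-module $V$ (as opposed to a mere $B$-module) is precisely the one making $ev$ equivariant — concretely, a section of the bundle associated to a $G$-module $V$ is a morphism $f: G \to V$ with $f(gb) = b^{-1}\cdot f(g)$, the $G$-action is by left translation $(g\cdot f)(x) = f(g^{-1}x)$, and $ev(f) = f(e)$; equivariance $ev(g\cdot f) = f(g^{-1}) = g\cdot f(e)$ uses that $f$ is $B$-equivariant only, so one must instead use that the bundle associated to a $G$-module is $G$-equivariantly trivial, i.e. $\mathcal{L}_{G/B}(V) \cong \mathcal{O}_{G/B}\otimes V$ as $G$-linearized bundles, under which $ev$ becomes evaluation at a point of the trivial bundle.

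Granting that identification, the key step is the chain of isomorphisms
$$H^0(G/B, V) \cong H^0(G/B, \mathcal{O}_{G/B}\otimes V) \cong H^0(G/B,\mathcal{O}_{G/B})\otimes V \cong V,$$
where the first uses the triviality of the associated bundle for a $G$-module, the second is the projection formula (or simply that $V$ is a finite-dimensional constant coefficient system), and the third uses that $G/B$ is a complete connected variety so $H^0(G/B, \mathcal{O}_{G/B}) = K$. One then checks this composite is exactly $ev$ and is $G$-equivariant, the $G$-action on the right-hand $V$ being the given one. Alternatively, and perhaps more in the spirit of Lemma 3.1, I would argue: by Frobenius reciprocity $H^0(G/B, V) = \mathrm{ind}_B^G(V|_B)$, and for a $B$-module $V$ obtained by restriction of a $G$-module we have a natural map $V \to \mathrm{ind}_B^G(V|_B)$ splitting $ev$; to see $ev$ is an isomorphism it suffices to show both sides have the same dimension, which follows from the triviality of $\mathcal{L}_{G/B}(V)$ noted above.

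To make the argument self-contained using Lemma 3.1, here is the route I would actually write down. Note first that $H^0(G/B, \mathcal{L}_{G/B}(V))$ is, by definition, $(K[G]\otimes V)^B$ where $B$ acts on $K[G]$ by right translation and on $V$ by the restricted action; by Lemma 3.1 applied with $H = B$, $W_1 = K[G]^{(r)}$ (viewed appropriately) this is $\mathrm{Hom}_B(-, -)$, but the cleaner statement is that $H^0(G/B, \mathcal{L}_{G/B}(V))$, carrying its residual left-$G$-action, is the largest $G$-submodule of $K[G]\otimes V$ and equals $\mathrm{Hom}_B(K, \mathrm{ind}_B^G V)$. The substance is the classical fact that for $V$ a $G$-module the associated bundle is $G$-trivial; given that, both the vanishing of higher cohomology is immediate and $ev$ is an isomorphism. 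The main obstacle, and the only non-formal point, is precisely establishing that $\mathcal{L}_{G/B}(V) \cong \mathcal{O}_{G/B}\otimes V$ as $G$-linearized sheaves when $V$ is a $G$-module: this is the map $G\times^B V \to G/B \times V$, $[g, v]\mapsto (gB, g\cdot v)$, which is well-defined precisely because $v \mapsto g\cdot v$ makes sense for $g\in G$ (not just $B$), is an isomorphism of vector bundles, and intertwines the linearizations. Once that identification is in place everything else — equivariance of $ev$, the cohomology computation, and the conclusion — is routine and characteristic-free, as the statement promises.
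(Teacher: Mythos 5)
Your argument is correct, but it is a genuinely different route from the one in the paper. You establish the $G$-equivariant triviality of the associated bundle, $G\times^{B}V \cong G/B\times V$ via $[g,v]\mapsto (gB, g\cdot v)$, and then use completeness and connectedness of $G/B$ (so that $H^{0}(G/B,\mathcal{O}_{G/B})=K$) to see that every section is of the form $\phi_{v}(g)=g^{-1}\cdot v$; bijectivity and equivariance of $ev$ are then immediate, and the identification is indeed characteristic-free (your worry in the first paragraph about equivariance resolves itself once you note that under the trivialization $ev(g\cdot f)=f(g^{-1})=g\cdot f(e)$ because $f=\phi_{v}$). The paper instead argues entirely module-theoretically: it first proves that $ev$ is a homomorphism of $G$-modules by applying its Lemma 3.1 together with the observation that, $G/B$ being projective, the $B$-invariants of $W_{1}^{*}\otimes W_{2}$ coincide with the $G$-invariants (so the $B$-invariant vector corresponding to $ev$ is automatically $G$-invariant); it then gets $\dim V\leq \dim H^{0}(G/B,V)$ from the injection $v\mapsto \phi_{v}$, and proves injectivity of $ev$ by noting its kernel is a $G$-submodule, so any $f$ in the kernel has all translates in the kernel and hence vanishes on all of $G$; a dimension count finishes the proof. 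Your approach buys a cleaner, one-stroke description of all sections (and would also give vanishing of higher cohomology for free, since the bundle is trivial), at the cost of having to verify the equivariant trivialization carefully; the paper's approach avoids any geometric identification of the bundle and leans on the invariant-theoretic Lemma 3.1, which it has already set up, plus the standard completeness argument. Your side remarks invoking Frobenius reciprocity and Lemma 3.1 are looser and not needed once the trivialization is in place, so in a final write-up you should either make that reduction precise or drop it and rest the proof on the trivialization alone.
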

\begin{proof}
{\it Step 1}

We first show that the evaluation map 
$ev: H^{0}(G/B, V)\longrightarrow V$ is a homomorphism of $G$-modules.

Take $W_{1}=H^{0}(G/B, V)$ and taking $W_{2}=V$.

We first note that $W_{1}$ and $W_{2}$ are both $G$-modules.
Since $G/B$ is projective, we see that 

{\it Observation}

the $B$-invariants of  
$W_{1}^{*}\bigotimes W_{2}$ is equal to the $G$-invariants of 
$W_{1}^{*}\bigotimes W_{2}$.

Since the evaluation map $ev: H^{0}(G/B, V)\longrightarrow V$ is a
homomorphism of $B$-modules, applying lemma(3.1) to $H=B$, we  can find a vector $u \in (W_{1}^{*}\bigotimes W_{2})^{B}$ such that  
$\psi(u)= ev$.

By the above {\it Observation}, we have $u \in (W_{1}^{*}\bigotimes W_{2})^{G}$.
We now apply lemma(3.1) to $H=G$ and conclude that 
the evaluation map $ev: H^{0}(G/B, V)\longrightarrow V$ is a
homomorphism of $G$-modules.

This completes the proof of {\it Step 1}.

By the description of the global sections of the vector bundle on $G/B$ 
associated to the $B$-module,  $H^{0}(G/B, V)$ is the space of all morphisms 
$f:G \longrightarrow V$ satisfying $f(gb)=b^{-1}\cdot f(g)$.

For each $v\in V$, we associate a morphism  
$\phi_{v}::G\longrightarrow V$ defined by $\phi_{v}(g)=g^{-1}\cdot v$.
Clearly, $\phi_{v}(gb)=b^{-1}\cdot \phi_{v}(g)$ for every $g\in G$ and 
for every $b\in B$.

So, we have the map 
$\phi:V \longrightarrow H^{0}(G/B, V)$ given by $\phi(v)=\phi_{v}$.
Clearly $\phi$ is injective. Hence, the dimension of $V$  is atmost 
the dimension of $H^{0}(G/B, V)$.

On the otherhand, using {\it Step 1}, we see that the kernal of evaluation map 
$ev:H^{0}(G/B, V)\longrightarrow V$ is a $G$ submodule of  $H^{0}(G/B, V)$.
Now, let $f$ be in the kernel of $ev$. Then, $g^{-1}\cdot f$ is also in the
kernel of $ev$. Hence, we have $f(g)=0$ for every $g\in G$. Thus , 
we have $f=0$.

Hence $ev$ is injective. Since the dimension of $V$  is atmost 
the dimension of $H^{0}(G/B, V)$, the evaluation map 
$ev: H^{0}(G/B, V)\longrightarrow V$ is an isomorphism of $G$-modules.

This completes the proof the lemma.

\end{proof}  

\subsection{Proof of theorem A}

In this section, we prove theorem A.

The follwing notation will be maintained throughout the rest of this 
section.

Let $G$ be a simple, simply connected and  simply laced algebraic group
over $\mathbb{C}$. Let $\mathfrak{g}$ be the Lie algebra of $G$.

Let $\tau\in W$. Let $\gamma$ be a simple root. 
Let $V$ be a $B$-sub module of $\mathfrak{g}$ 
containing $\mathfrak{b}$.  
We recall the evaluation map $ev:H^{0}(\tau,V)\longrightarrow V$
by $ev(s)=s(idB)$, the evaluation of the section at the identity
coset $idB$ as point in $X(\tau)$.  

\begin{lemma}
The evaluation map 
$ev:H^{0}(\tau, V)\longrightarrow V$ is injective.
\end{lemma}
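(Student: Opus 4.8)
The plan is to reduce the statement to the case of a Bott–Samelson variety $Z(\tau)$ via the standard fact (recorded in the preliminaries) that cohomology of $X(\tau)$ with coefficients in a homogeneous bundle agrees with cohomology of $Z(\tau)$, and then to argue by induction on $l(\tau)$ using the \emph{SES} attached to a factorization $\tau = s_{\gamma}w$ with $l(\tau) = l(w)+1$. The base case $l(\tau)=0$ is trivial, since $X(\tau)$ is a point and $ev$ is literally the identity on the fibre $V$. So assume $l(\tau)\geq 1$ and pick a simple root $\gamma$ with $\tau = s_\gamma w$, $l(\tau)=l(w)+1$; we get the fibration $g_1 : Z(\tau) \to P_\gamma/B \cong \mathbf{P}^1$ with fibre $Z(w)$.

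\emph{Key steps.} First I would set up the commutative diagram relating the evaluation maps for $Z(\tau)$ and $Z(w)$: the fibre of $g_1$ over the identity coset $idB$ of $P_\gamma/B$ is $Z(w)$, so restricting a global section on $Z(\tau)$ to this fibre gives a map $H^0(\tau,V) \to H^0(w,V)$, and composing with $ev_w : H^0(w,V) \to V$ computes $ev_\tau$. Thus it suffices to show (i) the restriction map $H^0(\tau,V)\to H^0(w,V)$ is injective, and (ii) $ev_w$ is injective, the latter being the induction hypothesis. For (i), the point is that $R^0{g_1}_*\mathcal{L}_\tau(V)$ is the homogeneous bundle on $\mathbf{P}^1 = P_\gamma/B$ associated to the $B_\gamma$-module $H^0(w,V)$; decomposing $H^0(w,V)$ into indecomposable $B_\gamma$-summands (using lemma(2.4), and hence lemma(2.3) to analyze each summand $V'\otimes\mathbb{C}_\lambda$ according to the sign of $\langle\lambda,\gamma\rangle$), one checks that $H^0(P_\gamma/B, H^0(w,V))\to H^0(w,V)$ — evaluation at $idB$, which is exactly the fibre-restriction map — is injective: in the $\langle\lambda,\gamma\rangle\ge 0$ case the global sections of the bundle on $\mathbf{P}^1$ inject into the fibre because a nonzero section of a bundle generated by global sections (here $\mathcal{O}(n)\otimes V'$ with $n\ge 0$) cannot vanish at a prescribed point while being forced, by $P_\gamma$-equivariance of the zero locus, to vanish everywhere; the $\langle\lambda,\gamma\rangle\le -1$ summands contribute nothing to $H^0$. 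Actually the cleanest route for (i): a section in the kernel of fibre-restriction at $idB$ is a section of $R^0{g_1}_*\mathcal{L}_\tau(V)$ vanishing at the $B$-fixed point of $\mathbf{P}^1$; since $P_\gamma$ acts on $H^0(\tau,V)$ and the zero locus of such a section is $P_\gamma$-stable, it must be all of $\mathbf{P}^1$, hence the section is zero — this mirrors exactly the argument used in the proof of lemma(3.2), \textit{Step}~after \textit{Step 1}.

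\emph{Main obstacle.} The one genuine subtlety is justifying that the kernel of the fibre-restriction map $H^0(\tau,V)\to H^0(w,V)$ is a $P_\gamma$-submodule on which the $P_\gamma$-action forces vanishing everywhere; this needs that $H^0(\tau,V) = H^0(P_\gamma/B, R^0{g_1}_*\mathcal{L}_w(V))$ carries a $P_\gamma$-structure compatible with restriction to the special fibre, which is exactly what the \emph{SES} (with $i=0$, where it degenerates to the isomorphism $H^0(\tau,V)\cong H^0(P_\gamma/B, R^0{g_1}_*\mathcal{L}_w(V))$) provides, together with $P_\gamma$-equivariance of all the maps. The remaining details — the explicit indecomposable decomposition over $B_\gamma$ and the sign analysis — are routine given lemma(2.3) and lemma(2.4).
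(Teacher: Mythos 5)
Your induction scheme, the factorization of $ev_{\tau}$ as the fibre-restriction $H^{0}(\tau,V)\rightarrow H^{0}(w,V)$ followed by $ev_{w}$, and the use of \emph{SES} at $i=0$ all coincide with the paper's proof; the gap is in your justification of step (i), the injectivity of the evaluation map $H^{0}(P_{\gamma}/B, H^{0}(w,V))\longrightarrow H^{0}(w,V)$. Since $H^{0}(w,V)$ is only a $B$-module, evaluation at $idB$ is only a homomorphism of $B$-modules; its kernel is a $B$-submodule but in general not $P_{\gamma}$-stable, and the zero locus of an individual section of a homogeneous bundle on $P_{\gamma}/B$ is not $P_{\gamma}$-stable. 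So your ``cleanest route'' fails: for $M=\mathbb{C}_{\lambda}$ with $n=\langle\lambda,\gamma\rangle\geq 1$ one has $H^{0}(P_{\gamma}/B,M)\cong H^{0}(\mathbf{P}^{1},\mathcal{O}(n))$, and evaluation at the $B$-fixed point has an $n$-dimensional kernel. The same example refutes the sign analysis in your first route: injectivity is exactly what breaks when $\langle\lambda,\gamma\rangle\geq 1$, the case you claim is harmless via ``$P_{\gamma}$-equivariance of the zero locus''. This does not mirror lemma(3.2): there the target $V$ is a $G$-module, so \emph{Step 1} makes $ev$ a $G$-module map and its kernel $G$-stable, which is what powers the translation argument; here $H^{0}(w,V)$ carries no $P_{\gamma}$-action. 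Tellingly, your argument never uses the hypothesis $\mathfrak{b}\subseteq V\subseteq\mathfrak{g}$ at this step, yet the lemma is false for a general $B$-module $V$ (take $V=\mathbb{C}_{\lambda}$ with $\langle\lambda,\gamma\rangle\geq 1$ and $\tau=s_{\gamma}$: then $\dim H^{0}(\tau,V)\geq 2>\dim V$).

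The paper closes this gap by exploiting the embedding into $\mathfrak{g}$: since $V\subseteq\mathfrak{g}$ and $H^{0}(w,\mathfrak{g})\cong\mathfrak{g}$, left exactness gives $B$-module inclusions $H^{0}(w,V)\hookrightarrow\mathfrak{g}$ and hence $H^{0}(P_{\gamma}/B,H^{0}(w,V))\hookrightarrow H^{0}(P_{\gamma}/B,\mathfrak{g})$; because $\mathfrak{g}$ is a $P_{\gamma}$-module (so a completely reducible $L_{\gamma}$-module), lemma(2.5(1)) says evaluation on $H^{0}(P_{\gamma}/B,\mathfrak{g})$ is an isomorphism, and by naturality of evaluation its restriction to the submodule is precisely the map in step (i), hence injective; then \emph{SES} and the induction hypothesis finish as you describe. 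Alternatively, your decomposition route could be repaired by proving that every indecomposable $B_{\gamma}$-summand of a $B_{\gamma}$-submodule of $\mathfrak{g}$ has the form $V'\otimes\mathbb{C}_{\lambda}$ with $\langle\lambda,\gamma\rangle\leq 0$ (those with $\langle\lambda,\gamma\rangle<0$ contribute nothing to $H^{0}$, those with $\langle\lambda,\gamma\rangle=0$ give evaluation isomorphisms), but that extra input, coming exactly from the containment in $\mathfrak{g}$, is what your proposal omits.
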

\begin{proof}

Proof is by induction on $l(\tau)$.

If $l(\tau)=0$, we are done.

So, we may choose a simple root $\gamma\in S$ such that 
$l(\tau)=l(s_{\gamma}\tau)=l(\tau)-1$.

Then, by induction on $l(\tau)$, we assume that the evaluation map 

$ev:H^{0}(s_{\gamma}\tau , V) \longrightarrow V$ is injective.

Since $V$ is a $B$-submodule of $\mathfrak{g}$, 
$H^{0}(s_{\gamma}\tau , V)$ is a $B$- submodule of 
$H^{0}(s_{\gamma}\tau , \mathfrak{g})$.  

Hence, $H^{0}(P_{\gamma}/B, H^{0}(s_{\gamma}\tau , V))$ is a $B$-submodule 
$H^{0}(P_{\gamma}/B, \mathfrak{g})$.

On the other hand, since the  $B$- module $\mathfrak{g}$ is a restriction 
of a $P_{\gamma}$ module, and so $\mathfrak{g}$ is a $L_{\gamma}$- module.

Now, since the incusion $L_{\gamma}/B_{\gamma}\hookrightarrow P_{\gamma}/B$
is an isomorphism, by using lemma (2.5(1)) (or lemma(3.2)), the evaluation map 
$ev:H^{0}(P_{\gamma}/B, \mathfrak{g})\longrightarrow \mathfrak{g}$ is an 
isomorphism of $B$- modules. Hence, $H^{0}(P_{\gamma}/B, H^{0}(s_{\gamma}\tau , V))$ is a $B$-submodule of $\mathfrak{g}$. 

Hence, the evaluation map $ev: H^{0}(P_{\gamma}/B, H^{0}(s_{\gamma}\tau , V))\longrightarrow H^{0}(s_{\gamma}\tau , V)$ is injective. Hence, using the short 
exact sequence {\it SES} of $B$- modules, we see that $H^{0}(\tau, V)$ 
is isomorphic to $H^{0}(P_{\gamma}/B, H^{0}(s_{\gamma}\tau , V))$.

Now, since  the evaluation map $ev:H^{0}(\tau,V)\longrightarrow V$ is the 
composition of the evaluation maps 
$ev: H^{0}(P_{\gamma}/B, H^{0}(s_{\gamma}\tau , V))
\longrightarrow H^{0}(s_{\gamma}\tau , V)$, and 
$ev:H^{0}(s_{\gamma}\tau, V)\longrightarrow V$, we conclude that
the evaluation map $ev: H^{0}(\tau, V)\longrightarrow V$ is injective.

This completes the proof of the lemma.
\end{proof}

Let $\tau\in W$. Let $\gamma$ be a simple root. 

Now, let $V$ be a $B$-sub module of $\mathfrak{g}$ 
containing $\mathfrak{b}$. Then, we have 

In view of lemma(2.7) and lemma(3.3), we see that the indecomposable 
$B_{\gamma}$- summands of $H^{0}(\tau , V)$ must be atmost $3$- dimensional.
However, it is not clear what are they precisely. It is important 
to study them to determine the cohomolgy modules $H^{i}(\tau, V)$.  

In this context, we prove the following Key lemma.

\begin{lemma}
Let $\tau\in W$. Let $\gamma$ be a simple root.
Every indecomposable $B_{\gamma}$- summand $V^{\prime}$ of 
$H^{0}(\tau , V)$ must be one of the following:
\begin{enumerate}
\item
$V^{\prime}=\mathbb{C}\cdot h$ for some $h\in \mathfrak{h}$ such that 
$\gamma(h)=0$. \\
\item  
$V^{\prime}=\mathbb{C}\cdot h \bigoplus \mathfrak{g}_{-\gamma}$ for some 
$h \in\mathfrak{h}$ such that $\gamma(h)=1$ and $\nu(h)=0$ for every 
simple root $\nu$ different from $\gamma$.\\
\item
$V^{\prime}=\mathfrak{g}_{\beta}$ for some root $\beta$ such that 
$\langle \beta , \gamma \rangle$ lying in $\{-1, 0, 1\}$. \\
\item  $V^{\prime}=\mathfrak{g}_{\beta} \bigoplus \mathfrak{g}_{\beta-\gamma}$ 
for some root $\beta$ such that 
$\langle \beta , \gamma \rangle = 1$. \\ 
\item
$V^{\prime}=sl_{2,\gamma}$, the three dimensional irreducible 
$L_{\gamma}$-module with highest weight $\gamma$.\\
\end{enumerate}
\end{lemma}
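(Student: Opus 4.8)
The plan is to argue by induction on $l(\tau)$, using the ascending $1$-step construction and the short exact sequence {\it SES}, exactly as in the proof of lemma(3.3). The base case $l(\tau)=0$ is the statement of lemma(2.7) applied to $V$ itself, since $H^0(\tau,V)=V$ when $\tau=\mathrm{id}$; the five listed types are precisely the three types of lemma(2.7) augmented by the two ``split-off'' possibilities that can occur once we start taking sections. For the inductive step, choose a simple root $\delta$ with $l(s_\delta\tau)=l(\tau)-1$, write $w=s_\delta\tau$, and apply the Leray sequence along $g_1:Z(\tau)\to P_\delta/B$; since all higher cohomology of $H^0(w,V)$ in degree $\geq 1$ contributes only to $H^{\geq 1}(\tau,V)$, the SES gives $H^0(\tau,V)\cong H^0(P_\delta/B,H^0(w,V))$ as $B$-modules, and hence, restricting to $B_\gamma$, as $B_\gamma$-modules.

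The next step is to understand how $H^0(P_\delta/B,-)$ acts on the $B_\delta$-summands of $H^0(w,V)$: by the inductive hypothesis (applied with the simple root $\delta$ in the role of $\gamma$) every indecomposable $B_\delta$-summand of $H^0(w,V)$ is one of the five listed types, and one computes $H^0(P_\delta/B,-)$ on each — using lemma(2.3) and the explicit $sl_2$-theory — to see that $H^0(\tau,V)$ is again a direct sum of modules of the same five types (as $B_\delta$-modules). The case $\delta=\gamma$ is then immediate. The real work is the case $\delta\neq\gamma$: here I must take a $B_\delta$-module which is a sum of the five types, view it as a $B_\gamma$-module, decompose it into indecomposable $B_\gamma$-summands, and check each such summand is again on the list. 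This is where the simply-laced hypothesis enters through lemma(2.6): for a root $\beta$ with $\beta\neq\pm\gamma$ one has $\langle\beta,\gamma\rangle\in\{-1,0,1\}$, which both bounds the $\gamma$-strings through weights of $\mathfrak g$ and forces the two-dimensional summands to have the shape $\mathfrak g_\beta\oplus\mathfrak g_{\beta-\gamma}$ with $\langle\beta,\gamma\rangle=1$. One also uses lemma(3.3): since $ev:H^0(\tau,V)\to V$ is injective, $H^0(\tau,V)$ embeds $B$-equivariantly, hence $B_\gamma$-equivariantly, into $\mathfrak g$, so its indecomposable $B_\gamma$-summands are among those of $\mathfrak g$ together with the ones in lemma(2.7) — which after intersecting with the five candidate types and accounting for the split case $\mathbb C\cdot h\oplus\mathfrak g_{-\gamma}$ leaves exactly the stated list.

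Concretely, for the $\delta\neq\gamma$ bookkeeping I would run through the cases: a one-dimensional Cartan piece $\mathbb C\cdot h$ stays one-dimensional of type (1) or is absorbed; a root space $\mathfrak g_\beta$ with $\beta\neq\pm\gamma$ is type (3) by lemma(2.6), while $\mathfrak g_\gamma$ or $\mathfrak g_{-\gamma}$ forces a larger $\gamma$-string and gets grouped into type (2), (4) or (5); a two-dimensional piece $\mathfrak g_\beta\oplus\mathfrak g_{\beta-\delta}$ breaks under $B_\gamma$ into root spaces handled as above unless its weights form part of a $\gamma$-string, in which case it lands in type (4); and $sl_{2,\delta}$ restricted to $B_\gamma$ decomposes (since $\delta\neq\gamma$) into a Cartan line plus root spaces, each already on the list. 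The main obstacle I anticipate is precisely this finite but delicate case analysis in the $\delta\neq\gamma$ step: one must be careful that a $\gamma$-string passing through a weight of the ambient $\mathfrak g$ never has length $\geq 3$ except inside $sl_{2,\gamma}$ (guaranteed by lemma(2.6)), and that the new ``split'' type (2) genuinely appears — it is forced when a copy of $sl_{2,\delta}$ or a two-dimensional summand contributes a Cartan vector $h$ with $\gamma(h)=1$ together with $\mathfrak g_{-\gamma}$ but not the full $sl_{2,\gamma}$, reflecting the fact that on a proper Schubert variety the section giving $x_\gamma$ need not extend. Once this combinatorial step is settled, assembling the induction is routine.
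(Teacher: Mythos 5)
Your proposal does not engage the actual crux of this lemma, and the step where the difficulty sits is precisely the one you dispose of in passing. Note first that the bare line $\mathfrak{g}_{-\gamma}$ is \emph{not} on the list (its weight pairs to $-2$ against $\gamma$, so it is not an instance of type (3)), and the whole content of the lemma is to rule out that $\mathfrak{g}_{-\gamma}$ ever occurs as a one-dimensional indecomposable $B_{\gamma}$-summand of $H^{0}(\tau,V)$; this is exactly what later makes the higher cohomology vanish in lemma(3.5). Your case analysis handles this by asserting that ``$\mathfrak{g}_{\gamma}$ or $\mathfrak{g}_{-\gamma}$ forces a larger $\gamma$-string and gets grouped into type (2), (4) or (5)''. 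For $\mathfrak{g}_{\gamma}$ this is harmless (acting by $X_{-\gamma}$ inside the image of $ev$ regenerates all of $sl_{2,\gamma}$), but for $\mathfrak{g}_{-\gamma}$ it begs the question: the larger string exists in $\mathfrak{g}$ and in $V$, but what must be shown is that a weight-zero partner $h$ with $\gamma(h)\neq 0$ (or the full $sl_{2,\gamma}$) actually lies in $H^{0}(\tau,V)$, i.e. that the corresponding sections extend over $X(\tau)$. A priori $H^{0}(\tau,V)$ could contain $X_{-\gamma}$ while every weight-zero vector it contains is killed by $\gamma$, and then $\mathbb{C}X_{-\gamma}$ would split off — the configuration the lemma forbids. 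Nothing in your sketch excludes this, and it cannot be excluded by the bookkeeping you propose, because your reduction runs through the $B_{\delta}$-module structure of $H^{0}(\tau,V)$: knowing the decomposition into $B_{\delta}$-indecomposables does not determine the $B_{\gamma}$-module structure (the action of $U_{-\gamma}$ is invisible there), so ``view it as a $B_{\gamma}$-module and decompose'' is not a step you can actually carry out from the data you have.

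The paper's proof supplies exactly the missing mechanism, and it is not routine. The induction hypothesis is strengthened to hold for \emph{all} simple roots $\nu$ simultaneously, not just for the fixed $\gamma$: if an indecomposable $B_{\nu}$-summand of $H^{0}(s_{\alpha}\tau,V)$ contains $\mathfrak{g}_{-\nu}$, it is of type (2) or (5) for $\nu$. Then, writing $\tau=s_{\alpha}(s_{\alpha}\tau)$ and using $H^{0}(\tau,V)=H^{0}(s_{\alpha},H^{0}(s_{\alpha}\tau,V))$, the delicate case $\langle\gamma,\alpha\rangle=-1$ with the type (5) summand $sl_{2,\gamma}$ is handled by passing to the minimal $B_{\alpha}$-summand $U^{\prime}$ containing it, observing $\alpha(H_{\gamma})=-1$ so $[X_{-\alpha},H_{\gamma}]=X_{-\alpha}$ forces $\mathfrak{g}_{-\alpha}\subset U^{\prime}$, applying the induction hypothesis \emph{at the root $\alpha$} to produce a Cartan vector $h^{\prime}$ with $\alpha(h^{\prime})=1$, and then constructing $h$ in the span of $h^{\prime}$ and $H_{\gamma}$ with $\gamma(h)=1$, $\nu(h)=0$ otherwise, which survives $H^{0}(s_{\alpha},-)$ and keeps $\mathfrak{g}_{-\gamma}$ locked inside a type (2) (or (5)) summand of $H^{0}(\tau,V)$. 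This interplay between the two simple roots, the Lie bracket argument, and the survival of the Cartan partner under taking sections is the substance of the lemma; your proposal replaces it by an unproved assertion, so it has a genuine gap rather than being an alternative proof.
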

\begin{proof}

Let $V^{\prime}$ be an indecomposable $B_{\gamma}$-summand of
$H^{0}(\tau, V)$. If the weight of the $B_{\gamma}$-stable line in 
$V^{\prime}$ is different from $-\gamma$, then, using lemma(2.7) and 
lemma(3.3), we see that $V^{\prime}$ must be one of the types (1), (3) or (4).

Otherwise, $\mathfrak{g}_{-\gamma}$ is a $B_{\gamma}$-submodule of 
$V^{\prime}$. In this case, we need to show that  
$\mathfrak{g}_{-\gamma}$ is a proper subspace of $V^{\prime}$.
That is, either  $V^{\prime}=\mathbb{C}\cdot h \bigoplus \mathfrak{g}_{-\gamma}$ 
for some $h \in\mathfrak{h}$ such that $\gamma(h)=1$ and $\nu(h)=0$ for every 
simple root $\nu$ different from $\gamma$ or $V^{\prime}=sl_{2, \gamma}$.

We prove this by induction on $l(\tau)$.

If $l(\tau)=0$, then, $\tau=id$ and so we are done.

Otherwise, choose a simple root $\alpha$ such that 
$l(\tau)=1+l(s_{\alpha}\tau)$.

By induction on $l(\tau)$, we assume that for any simple root $\nu$, if $V^{\prime}$ is an indecomposable $B_{\nu}$ summand of $H^{0}(s_{\alpha}\tau, V)$ containing $\mathfrak{g}_{-\nu}$, then, either $V^{\prime}=\mathbb{C}\cdot h \bigoplus 
\mathfrak{g}_{-\nu}$ for some $h \in\mathfrak{h}$ such that $\nu(h)=1$ and $\mu(h)=0$ for every simple root $\mu$ different from $\nu$ or
$V^{\prime}=sl_{2, \nu}$.

We now fix a simple root $\gamma$.

We give the details of proof in $3$ different cases as follows.

{\it Case 1:}

We first assume that $\gamma=\alpha$. 

Now, let $V_{1}$ be an indecomposable $B_{\gamma}$-summand of  
$H^{0}(\tau, V)$ containing $\mathfrak{g}_{-\gamma}$. Then, 
using lemm(3.3), we see that there is an indecomposable $B_{\gamma}$-summand 
$V^{\prime}$ of $H^{0}(s_{\gamma}\tau, V)$ containing 
$\mathfrak{g}_{-\gamma}$. Since $l(s_{\gamma}\tau)=l(\tau)-1$, by induction on length of $\tau$, we see that $V^{\prime}$ must be of type
(2) or of type (5).

If $V^{\prime}$ is of type (2), then, $H^{0}(s_{\gamma}, V^{\prime})=(0)$.
Hence, $\mathfrak{g}_{-\gamma}$ can not be a subspace of 
$H^{0}(s_{\gamma}, V^{\prime})$.

On the other hand,  using {\it SES}, we have 
$H^{0}(s_{\gamma}, H^{0}(s_{\gamma}\tau , V))=H^{0}(\tau, V)$. 
Thus, $\mathfrak{g}_{-\gamma}$ can not be a subspace of 
$H^{0}(\tau, V)$. 

This completes the proof for the case when $\alpha=\gamma$.

{\it Case2 :}

We assume that $\alpha$ is different from $\gamma$ and 
$\langle \gamma , \alpha \rangle \neq 0$.
By using lemma (2.6), we have $\langle \gamma , \alpha \rangle = -1$.

By a similar arguement as in {\it Case 1}, we may assume that there is a

$B_{\gamma}$-summand 
$V^{\prime}$ of $H^{0}(s_{\gamma}\tau, V)$ containing 
$\mathfrak{g}_{-\gamma}$. By induction, $V^{\prime}$ must be of type
(2) or of type (5).

Now, let $U^{\prime}$ denote the minimal $B_{\alpha}$-summand of  
$H^{0}(\tau, V)$ containing $V^{\prime}$. 

{\it Subcase 1:}

If $V^{\prime}$ is of type (2), then, $V^{\prime}=\mathbb{C}\cdot h \bigoplus 
\mathfrak{g}_{-\gamma}$ for some $h \in\mathfrak{h}$ such that 
$\gamma(h)= 1$ and $\nu(H)=0$ for every simple root $\nu$ different from 
$\gamma$.

Then, an indecomposable $B_{\alpha}$-summand $V_{1}$ of $U^{\prime}$ containing 
$\mathfrak{g}_{-\gamma}$ must be of the form $\mathfrak{g}_{-\gamma} \bigoplus \mathfrak{g}_{-\gamma-\alpha}$. So, by lemma (2.5), we have 
$H^{0}(s_{\alpha}, V_{1})=V_{1}$.

Hence, $\mathfrak{g}_{-\gamma}$ must be a sub space of 
$H^{0}(s_{\gamma}, H^{0}(s_{\gamma}\tau, V))$.

Since $\alpha\neq \gamma$, we have $\alpha(h)=0$. Hence,  
$\mathbb{C}\cdot h $ is a $B_{\alpha}$-direct summand 
of $U^{\prime}$. Hence, $\mathbb{C}\cdot h $ must be a $B_{\alpha}$-submodule of 
$H^{0}(s_{\alpha} , U^{\prime})$. Hence, $\mathbb{C}\cdot h$ must be a subspace 
of $H^{0}(s_{\gamma}, H^{0}(s_{\gamma}\tau, V))$.

Hence, $V^{\prime}=\mathbb{C}\cdot h \bigoplus 
\mathfrak{g}_{-\gamma}$ must be a subspace of 
$H^{0}(s_{\alpha}, H^{0}(s_{\alpha}\tau, V))$. 

Thus, by using {\it SES}, we conclude that 
$V^{\prime}=\mathbb{C}\cdot h \bigoplus 
\mathfrak{g}_{-\gamma}$ is a subspace of 
$H^{0}(\tau, V))$. 

{\it Subcase 2:}

Let $V^{\prime}$ be of type (5). Then, we have $V^{\prime}=sl_{2, \gamma}$.
In this case, $U^{\prime}$, the minimal $B_{\alpha}$-summand of  
$H^{0}(\tau, V)$ containing $V^{\prime}$ must contain 
$H_{\gamma}=-[X_{-\gamma}, X_{\gamma}]$. 
Here $[X_{-\gamma}, X_{\gamma}]$ denotes the Lie bracket of  $X_{-\gamma}$ and 
$X_{\gamma}$ in $\mathfrak{g}$.

Since $\alpha(H_{\gamma})=-1$, we have $[X_{-\alpha}, H_{\gamma}]=X_{-\alpha}$. 
Hence, $\mathfrak{g}_{-\alpha}$ must be a subspace of $U^{\prime}$. 

Therefore, by induction applying to the simple root $\nu=\alpha$, 
$U^{\prime}$ must either contain $sl_{2, \alpha}$ or 
the indecomposable $B_{\alpha}$- module $\mathbb{C}\cdot h \bigoplus \mathfrak{g}_{-\alpha}$ for some $ h\in\mathfrak{h}$ such that $\alpha(h)=1$ and 
$\nu(h)=0$ for every simple root $\nu$ different from $\alpha$.

Now, if $\mathfrak{g}_{\gamma+\alpha}$ is a sub space of $U^{\prime}$, then, 
$\mathfrak{g}_{\alpha+\gamma}\bigoplus \mathfrak{g}_{\gamma}$ is an 
indecomposable $B_{\alpha}$- summand of $U^{\prime}$. Since $\mathfrak{g}_{\alpha+\gamma}+\mathfrak{g}_{\gamma}$ is a $L_{\gamma}$- module, using lemma(2.5), 
we see that $H^{0}(s_{\alpha}, \mathfrak{g}_{\alpha+\gamma}\bigoplus \mathfrak{g}_{\gamma})=\mathfrak{g}_{\alpha+\gamma}\bigoplus\mathfrak{g}_{\gamma}$. Hence, 
we have 

{\it Observation:} 

$\mathfrak{g}_{\alpha+\gamma}\bigoplus\mathfrak{g}_{\gamma}$ is a sub space of 
$H^{0}(s_{\alpha}, U^{\prime})$.  

Since $U^{\prime}$ is a $B_{\alpha}$-submodule of 
$H^{0}(s_{\alpha}\tau, V)$, $H^{0}(s_{\alpha}, U^{\prime})$ is a $B_{\alpha}$-sub
module of $H^{0}(s_{\alpha}, H^{0}(s_{\alpha}\tau, V))$. On the other hand, 
since $H^{0}(s_{\alpha}, H^{0}(s_{\alpha}\tau, V))$
is a $B_{\gamma}$-module , using {\it Observation } , we see that 
the $B_{\gamma}$-span $sl_{2,\gamma}$ of $\mathfrak{g}_{\gamma}$ 
must be a $B_{\gamma}$-sub module of $H^{0}(s_{\alpha}, H^{0}(s_{\alpha}\tau, V))$.

Using {\it SES}, we conclude that $H^{0}(\tau, V)$ cntains 
$sl_{2,\gamma}$. This proves that $H^{0}(\tau, V)$ cntains an indecomposable 
$B_{\gamma}$ summand of type (5).

Now, if $\mathfrak{g}_{\gamma+\alpha}$ is not a sub space of $U^{\prime}$, then, 
$\mathfrak{\gamma}$ is an indecomposable $B_{\alpha}$- direct summand of $U^{\prime}$. Since $\langle \gamma , \alpha \rangle =-1$, by lemma , we have
$H^{i}(s_{\alpha} ,  \mathfrak{\gamma}) =(0)$ for every $i \in \mathbb{Z}_{\geq 0}$. In particular, $\mathfrak{\gamma}$ can not be a subspace of $H^{0}(s_{\alpha}, H^{0}(s_{\alpha}\tau, V))$. 

Let $V^{\prime}$ be of type (5). Then, we have $V^{\prime}=sl_{2, \gamma}$.
In this case, $U^{\prime}$, the minimal $B_{\alpha}$-summand of  
$H^{0}(\tau, V)$ containing $V^{\prime}$ must contain 
$B_{\gamma}$-span of $X_{\gamma}$. In particular, 
$H_{\gamma}\in U^{\prime}$. Since $\alpha(H_{\gamma})=-1$, we have $[X_{-\alpha}, H_{\gamma}]=X_{-\alpha}$. Hence, $\mathfrak{g}_{-\alpha}$ must be a sub space of $U^{\prime}$.

Therefore, by induction applying to the simple root $\alpha$, $U^{\prime}$
must either contain $sl_{2, \alpha}$ or it must contain the indecomposable 
$B_{\alpha}$- module $\mathbb{C}\cdot h \bigoplus \mathfrak{g}_{-\alpha}$ for 
some $h \in\mathfrak{h}$ such that $\alpha(h)=1$ and $\nu(h)=0$ for every 
simple root $\nu$ different from $\alpha$. In either cases, $U^{\prime}$ 
contains a vector $h^{\prime}$ of $\mathfrak{h}$ which is linearly independent
to $H_{\gamma}$ and $\alpha(h^{\prime})=1$. 

Hence, we can find a vector $h$ in the vector subspace spanned by $h^{\prime}$ 
and $H_{\gamma}$ such that $\gamma(h)=1$ and $\nu(h)=0$ for every simple root 
different from $\gamma$. Therefore, $\mathbb{C}\cdot h$ is a $B_{\alpha}$- direct summand of $U^{\prime}$. Hence, we see that $H^{0}(s_{\alpha}, \mathbb{C}\cdot h)=\mathbb{C}\cdot h$ is a subspace of $H^{0}(s_{\alpha}, U^{\prime})$.

Thus, the $B_{\gamma}$-span $\mathbb{C}\cdot h \bigoplus 
\mathfrak{g}_{-\gamma}$ of $\mathbb{C}\cdot h$ is a subspace of 
$H^{0}(s_{\alpha}, U^{\prime})$. Using {\it SES}, we conclude that 
$\mathbb{C}\cdot h \bigoplus \mathfrak{g}_{-\gamma}$ is a $B_{\gamma}$- direct 
summand of $H^{0}(\tau, V))$. 

{\it Case 3:}

We assume that $\langle \gamma , \alpha \rangle =0$. 

Proof in this case is similar but actually simpler than that of 
{\it Case 2}.

\end{proof}

Let $G$ be simply laced. 

Let $V$ be a $B$-sub module of $\mathfrak{g}$ containing $\mathfrak{b}$.

\begin{lemma}
Let $\tau \in W$. Then, we have
$H^{i}(\tau, V)=(0)$ for every $i\geq 1$.
\end{lemma}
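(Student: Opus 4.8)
The plan is to induct on $l(\tau)$. The base case $l(\tau) = 0$ is trivial since then $X(\tau)$ is a point. For the inductive step, choose a simple root $\gamma$ with $l(\tau) = l(s_\gamma\tau) + 1$ and apply the \emph{SES} for $i \geq 1$ together with the descending $1$-step construction: $H^i(\tau, V)$ is sandwiched between $H^1(P_\gamma/B, R^{i-1}{g_1}_*\mathcal{L}(V))$ and $H^0(P_\gamma/B, R^i {g_1}_* \mathcal{L}(V))$, where the sheaves $R^j {g_1}_*\mathcal{L}(V)$ are the vector bundles on $P_\gamma/B \cong \mathbf{P}^1$ associated to the $B_\gamma$-modules $H^j(s_\gamma\tau, V)$. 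By the inductive hypothesis, $H^j(s_\gamma\tau, V) = (0)$ for $j \geq 1$, so $R^{i-1}{g_1}_*\mathcal{L}(V)$ contributes nothing once $i - 1 \geq 1$, i.e. once $i \geq 2$; and $R^i{g_1}_*\mathcal{L}(V)$ contributes nothing for $i \geq 1$. Hence the only surviving term is $H^1(P_\gamma/B, R^0{g_1}_*\mathcal{L}(V)) = H^1(L_\gamma/B_\gamma, H^0(s_\gamma\tau, V))$ in the case $i = 1$, and $H^i(\tau, V) = (0)$ automatically for $i \geq 2$.

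So the crux is to show $H^1(L_\gamma/B_\gamma, H^0(s_\gamma\tau, V)) = (0)$. Here I would decompose the $B_\gamma$-module $H^0(s_\gamma\tau, V)$ into a direct sum of indecomposable $B_\gamma$-summands, which is legitimate since cohomology commutes with direct sums. By the Key Lemma (lemma 3.4), applied with $\tau$ replaced by $s_\gamma\tau$ — noting $H^0(s_\gamma\tau, V)$ is still a $B$-submodule of $\mathfrak{g}$ containing $\mathfrak{b}$ by lemma 3.3 and the isomorphism $H^0(\tau,V) \cong H^0(P_\gamma/B, H^0(s_\gamma\tau,V))$ argued there, each such summand $V'$ is of one of the five listed types. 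Now I compute $H^1(L_\gamma/B_\gamma, V')$ for each type using lemma 2.3. Writing each $V'$ as $V'' \otimes \mathbb{C}_\lambda$ with $V''$ irreducible for $L_\gamma$ (lemma 2.4): type (1) is $\mathbb{C}\cdot h$ with $\langle 0, \gamma\rangle = 0$, so lemma 2.3(1) gives vanishing $H^1$; type (2) is $\mathbb{C}\cdot h \oplus \mathfrak{g}_{-\gamma}$, the $2$-dimensional indecomposable with $\langle$ its $B_\gamma$-stable weight $-\gamma, \gamma\rangle = -2$ — by Demazure's sequences this is $V_{0,\gamma}$-like but one must check it is exactly the kernel module $K$ from lemma 2.1 with $\lambda = 0$, whence $H^1$ vanishes (indeed $H^1(s_\gamma, \mathfrak{g}_{-\gamma}) = \mathbb{C}_0$ cancels against the $H^0$ contribution of $\mathbb{C}\cdot h$); type (3) is $\mathfrak{g}_\beta$ with $\langle \beta, \gamma\rangle \in \{-1,0,1\}$, so by lemma 2.3(1) (for $0,1$) or 2.3(3) (for $-1$) the $H^1$ vanishes; type (4) is $\mathfrak{g}_\beta \oplus \mathfrak{g}_{\beta-\gamma}$ with $\langle \beta, \gamma\rangle = 1$, which is $V_{\beta-\gamma,\gamma}$-type with $\langle$ lowest weight $\beta - \gamma, \gamma\rangle = -1$, giving vanishing $H^1$ by lemma 2.3(3); type (5) is $sl_{2,\gamma}$, an irreducible $L_\gamma$-module, so $H^1 = (0)$ by lemma 2.3(1) with $\lambda = 0$.

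The main obstacle I anticipate is type (2): the $2$-dimensional indecomposable $\mathbb{C}\cdot h \oplus \mathfrak{g}_{-\gamma}$ is \emph{not} of the form $V'' \otimes \mathbb{C}_\lambda$ with $V''$ irreducible for $L_\gamma$ in an obvious way — it is genuinely indecomposable but not irreducible as an $L_\gamma$-module (it is not even an $L_\gamma$-module, only a $B_\gamma$-module). So lemma 2.3 does not apply directly; instead I must use the Demazure exact sequences (lemma 2.1) or lemma 2.2 directly. The cleanest route: this module sits in a short exact sequence $(0) \to \mathfrak{g}_{-\gamma} \to V' \to \mathbb{C}\cdot h \to (0)$ of $B_\gamma$-modules where $\mathbb{C}\cdot h \cong \mathbb{C}_0$ (since $\gamma(h) = 1$ means... wait, actually $\mathbb{C}\cdot h$ carries the trivial character of $T$, as $h \in \mathfrak{h}$); passing to the long exact cohomology sequence, $H^1(s_\gamma, \mathfrak{g}_{-\gamma}) \cong \mathbb{C}_0$ by lemma 2.2(2) applied to $\langle -\gamma, \gamma\rangle = -2$, and this maps to $H^1(s_\gamma, V')$, while $H^0(s_\gamma, \mathbb{C}\cdot h) = \mathbb{C}\cdot h \cong \mathbb{C}_0$ maps onto the connecting map's source; one checks the connecting map $H^0(s_\gamma, \mathbb{C}_0) \to H^1(s_\gamma, \mathfrak{g}_{-\gamma})$ is an isomorphism (both one-dimensional, and non-zero because $V'$ is indecomposable, not split), forcing $H^1(s_\gamma, V') = (0)$. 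Alternatively, observe $V'$ is precisely the module $K$ in lemma 2.1 when $\lambda = 0$ — up to the identification $\mathbb{C}_{s_\gamma(0)} = \mathbb{C}_0 = \mathbb{C}\cdot h$... no, the weights are $0$ and $-\gamma$, matching $V' \cong K$ with the sub being $\mathbb{C}_{s_\gamma(\lambda)}$ at $\lambda = 0$ — wait $V_{\lambda - \gamma, \gamma} = V_{-\gamma, \gamma}$ has $\langle -\gamma, \gamma\rangle = -2 < 0$ so it is $(0)$, so $K \cong \mathbb{C}_0$, which does not match. I will therefore present the long-exact-sequence argument for type (2) carefully and route the remaining types through lemma 2.3, after which the \emph{SES} completes the induction.
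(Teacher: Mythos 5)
Your proposal is correct and follows essentially the same route as the paper: induction on $l(\tau)$, the \emph{SES} reducing everything to the vanishing of $H^{i}(P_{\gamma}/B, H^{0}(s_{\gamma}\tau,V))$ for $i\geq 1$, the classification of the indecomposable $B_{\gamma}$-summands of $H^{0}(s_{\gamma}\tau,V)$ from lemma(3.4), and lemma(2.3) to kill the higher cohomology of each summand. Your extra concern about type (2) is unnecessary --- by lemma(2.4) it is the two-dimensional irreducible $L_{\gamma}$-module tensored with a character $\lambda$ satisfying $\langle \lambda,\gamma\rangle=-1$, so lemma(2.3(3)) disposes of it directly (and type (4) should be routed through lemma(2.3(1)) with $\langle\lambda,\gamma\rangle=0$, not 2.3(3), since its $H^{0}$ does not vanish) --- but your long-exact-sequence argument for type (2) reaches the same vanishing, so the proof goes through as in the paper.
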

\begin{proof}

Proof is by induction on $l(\tau)$.

If $l(\tau)=0$, we are done.

Otherwise, we choose a simple root $\gamma\in S$ be such that 
$l(s_{\gamma}\tau)=l(\tau)-1$.

By lemma(3.4), every indecomposable $B_{\gamma}$- summand $V^{\prime}$ 
of $H^{0}(s_{\gamma}\tau , V)$ must be one of the 5 types given 
in lemma(3.4).    

Hence, using lemma(2.3), we conclude that 
$H^{i}(P_{\gamma}/B, V^{\prime})$ is zero for every indecomposable 
$B_{\gamma}$-summand $V^{\prime}$ of  $H^{0}(s_{\gamma}\tau , V))$ 
and for every $i\geq 1$.

Thus, we have shown that 

{\it Observation :}

$H^{i}(P_{\gamma}/B, H^{0}(s_{\gamma}\tau , V))=(0)$ 
for all $i\geq 1$.

By induction on $l(\tau)$, we have $H^{i}(s_{\gamma}\tau , V)$ is zero 
for all $i\geq 1$. Now, using {\it Observation } and using the short exact 
sequence {\it SES} of $B$ modules, we conclude that 
$H^{i}(\tau, V)$ is zero for all $i\geq 1$.

This completes the proof of lemma.

\end{proof}

Let $V_{1}$ be a $B$-sub module of $\mathfrak{g}$ containing $\mathfrak{b}$.
Let $V_{2}$ be a $B$-sub module of $V_{1}$ containing $\mathfrak{b}$.

Let $\tau\in W$. The natural projection $\Pi:V_{1} \longrightarrow 
V_{1}/V_{2}$ of $B$-modules induces a homomorphism of $B$-modules 
$\Pi_{\tau}:H^{0}(\tau, V_{1})\longrightarrow H^{0}(\tau, V_{1}/V_{2})$ of 
$B$-modules.

We now deduce the following lemma as a consequence of the lemma(3.5).

\begin{lemma}
\begin{enumerate}
\item
$H^{i}(\tau, V_{1}/V_{2})$ is zero for all $i\geq 1$. \\
\item
$\Pi_{\tau}:H^{0}(\tau, V_{1}) \longrightarrow H^{0}(\tau, V_{1}/V_{2})$ is a 
surjective homomorphism of $B$-modules whose kernel is 
$H^{0}(\tau ,V_{2})$. \\
\end{enumerate}
\end{lemma}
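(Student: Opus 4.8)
The plan is to derive both statements simultaneously from the short exact sequence of $B$-modules
\[
(0)\longrightarrow V_2\longrightarrow V_1\longrightarrow V_1/V_2\longrightarrow (0),
\]
which is the restriction of a sequence of $B$-submodules of $\mathfrak{g}$, all containing $\mathfrak{b}$ except possibly $V_1/V_2$, to which we apply the functor $H^{\bullet}(\tau,-)$ (equivalently, $H^{\bullet}(Z(\tau),\mathcal{L}_\tau(-))$). This yields the long exact sequence
\[
\cdots\longrightarrow H^{i}(\tau,V_2)\longrightarrow H^{i}(\tau,V_1)\xrightarrow{\;\Pi_\tau\;} H^{i}(\tau,V_1/V_2)\longrightarrow H^{i+1}(\tau,V_2)\longrightarrow\cdots
\]
The key inputs are lemma(3.5) applied to $V_1$ and to $V_2$: since both are $B$-submodules of $\mathfrak{g}$ containing $\mathfrak{b}$, we get $H^{i}(\tau,V_1)=(0)$ and $H^{i}(\tau,V_2)=(0)$ for all $i\geq 1$.

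For part (1), I would read off from the long exact sequence the segment
\[
H^{i}(\tau,V_1)\longrightarrow H^{i}(\tau,V_1/V_2)\longrightarrow H^{i+1}(\tau,V_2)
\]
for $i\geq 1$. Both outer terms vanish by lemma(3.5), so $H^{i}(\tau,V_1/V_2)=(0)$ for all $i\geq 1$. For part (2), I would use the low-degree portion of the long exact sequence:
\[
(0)\longrightarrow H^{0}(\tau,V_2)\longrightarrow H^{0}(\tau,V_1)\xrightarrow{\;\Pi_\tau\;} H^{0}(\tau,V_1/V_2)\longrightarrow H^{1}(\tau,V_2).
\]
By lemma(3.5) applied to $V_2$, the rightmost term $H^{1}(\tau,V_2)$ is zero, so $\Pi_\tau$ is surjective; exactness at $H^{0}(\tau,V_1)$ identifies the kernel of $\Pi_\tau$ with the image of $H^{0}(\tau,V_2)$, and injectivity of $H^{0}(\tau,V_2)\to H^{0}(\tau,V_1)$ (which follows since this is a left-exact functor, or alternatively from the injectivity statement of lemma(3.3) combined with naturality of the evaluation maps) shows that this image is exactly a copy of $H^{0}(\tau,V_2)$. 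Hence $\Pi_\tau$ is surjective with kernel $H^{0}(\tau,V_2)$, as claimed.

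The only genuine subtlety is making sure the long exact cohomology sequence is available in this setting: one works on the Bott--Samelson scheme $Z(\tau)$, where $\mathcal{L}_\tau(-)$ is an exact functor from $B$-modules to vector bundles (it is induced from the $B$-equivariant structure, and taking associated bundles is exact), so the short exact sequence of $B$-modules gives a short exact sequence of sheaves on $Z(\tau)$ and hence the usual long exact sequence in sheaf cohomology; all maps are $B$-equivariant, and the comparison with $X(\tau)$ is via the isomorphisms ${\phi_\tau}_*\mathcal{O}_{Z(\tau)}=\mathcal{O}_{X(\tau)}$, $R^{q}{\phi_\tau}_*\mathcal{O}_{Z(\tau)}=0$ recalled in the preliminaries. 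This is routine, so there is essentially no obstacle beyond bookkeeping; the substance of the argument is entirely carried by lemma(3.5).
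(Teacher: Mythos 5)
Your argument is correct and is essentially the paper's own proof: apply $H^{\bullet}(\tau,-)$ to the short exact sequence $(0)\to V_2\to V_1\to V_1/V_2\to(0)$, invoke lemma(3.5) to kill $H^{i}(\tau,V_1)$ and $H^{i}(\tau,V_2)$ for $i\geq 1$, and read off both conclusions from the resulting long exact sequence. The extra remark on why the long exact sequence is available on $Z(\tau)$ is a harmless (and correct) elaboration of what the paper leaves implicit.
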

\begin{proof}

{\it Proof of (1):}

We have the short exact sequence    
 $(0)\longrightarrow V_{2} \longrightarrow V_{1} \longrightarrow 
V_{1}/V_{2} \longrightarrow (0)$ of $B$- modules.

Applying $H^{i}(\tau,- )$ to this short exact sequence of $B$-modules, 
we obtain the following long exact sequence of $B$-modules: 

{\it Observation :}

$$\cdots H^{i}(\tau, V_{2}) \longrightarrow H^{i}(\tau, V_{1}) 
\longrightarrow H^{i}(\tau, V_{1}/V_{2})\longrightarrow 
H^{i+1}(\tau, V_{2}) \cdots$$

By lemma(3.5), $H^{i}(\tau, V_{2})$, $H^{i}(\tau, V_{1})$ and  
$H^{i+1}(\tau, V_{2})$ are all zero for every $i\geq 1$.
Thus, we conclude that $H^{i}(\tau, V_{1}/V_{2})=(0)$
for every $i\geq 1$. 

This proves (1).

{\it Proof of (2):}

Taking $i=0$ in {\it Observation } and using $H^{1}(\tau, V_{2})=(0)$,  
we obtain the following short exact sequence 
$$(0)\longrightarrow H^{0}(\tau, V_{2}) \longrightarrow H^{0}(\tau, V_{1}) 
\longrightarrow H^{0}(\tau, V_{1}/V_{2})\longrightarrow (0).$$

This proves (2).

\end{proof}

We have 

\begin{corollary}
Let $\tau\in W$. Let $\alpha$ be a positive root.
Then, $H^{i}(\tau, \alpha)=(0)$ for every $i\geq 1$.
\end{corollary}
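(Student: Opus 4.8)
The plan is to derive the corollary directly from Lemma~3.5 by exhibiting, for each positive root $\alpha$, a suitable pair of $B$-submodules of $\mathfrak{g}$ to which Lemma~3.6 applies. Recall that Lemma~3.5 asserts $H^{i}(\tau,V)=(0)$ for $i\geq 1$ whenever $V$ is a $B$-submodule of $\mathfrak{g}$ containing $\mathfrak{b}$; and Lemma~3.6(1) extends this to quotients $V_{1}/V_{2}$ of two such nested submodules. So the task reduces to realizing the one-dimensional $B$-module $\mathbb{C}_{\alpha}$ (the fibre of $\mathcal{L}_{\alpha}$, giving $H^{i}(\tau,\alpha)$) as such a subquotient of $\mathfrak{g}$.

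First I would recall that for $\alpha\in R^{+}$ the root space $\mathfrak{g}_{\alpha}\subset\mathfrak{g}$ is a $T$-weight space of weight $\alpha$, and that $\mathbb{C}_{\alpha}$ as a $B$-module is isomorphic to $\mathfrak{g}_{\alpha}$ with its $B$-action: indeed since $U$ acts on $\mathfrak{g}$ by the adjoint action and raises weights, and $\alpha$ together with $\mathfrak{b}$ already accounts for all the needed structure, the quotient of an appropriate $U$-stable filtration isolates $\mathfrak{g}_{\alpha}$ with trivial $U$-action, i.e.\ as $\mathbb{C}_{\alpha}$. Concretely, set $V_{1}=\mathfrak{b}\oplus\mathfrak{g}_{\alpha}$ and $V_{2}=\mathfrak{b}$. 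One must check $V_{1}$ is a $B$-submodule of $\mathfrak{g}$: $\mathfrak{b}$ is $B$-stable, and $[\mathfrak{b},\mathfrak{g}_{\alpha}]\subseteq \mathfrak{g}_{\alpha}+\sum_{\beta>0}\mathfrak{g}_{\alpha+\beta}\subseteq V_{1}$ since $\alpha+\beta$ is either positive (hence in $\mathfrak{b}$) or not a root. Likewise $V_{2}=\mathfrak{b}\supseteq\mathfrak{b}$ is a $B$-submodule of $V_{1}$. Then $V_{1}/V_{2}\cong\mathbb{C}_{\alpha}$ as $B$-modules, so $\mathcal{L}_{\tau}(V_{1}/V_{2})=\mathcal{L}_{\alpha}$ and $H^{i}(\tau,V_{1}/V_{2})=H^{i}(\tau,\alpha)$.

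Having set this up, the corollary is immediate: by Lemma~3.6(1) applied to this pair $V_{2}\subseteq V_{1}\subseteq\mathfrak{g}$ we get $H^{i}(\tau,V_{1}/V_{2})=(0)$ for all $i\geq 1$, that is $H^{i}(\tau,\alpha)=(0)$ for all $i\geq 1$. (One should note the paper's convention that $\mathcal{L}_{\alpha}$ and the cohomology $H^{i}(w,\lambda)$ are defined via the Bott--Samelson scheme $Z(w)$, and since $\alpha$ need not be dominant this is the appropriate reading; the statement of Lemma~3.6 is already phrased in exactly this generality, so no further bookkeeping is needed.)

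The only real obstacle is the verification that $V_{1}=\mathfrak{b}\oplus\mathfrak{g}_{\alpha}$ is genuinely $B$-stable and that the quotient $B$-module is $\mathbb{C}_{\alpha}$ rather than something larger — this hinges on the elementary fact that if $\alpha,\beta\in R^{+}$ then $\alpha+\beta\in R^{+}\cup\{\text{non-roots}\}$, so the adjoint action of $U$ cannot push $\mathfrak{g}_{\alpha}$ outside $\mathfrak{b}\oplus\mathfrak{g}_{\alpha}$, and the induced action on the line $\mathfrak{g}_{\alpha}\cong V_{1}/V_{2}$ is through the character $\alpha$ of $B$ (the torus acts by $\alpha$, the unipotent part trivially). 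This is routine root-system combinatorics, so the corollary follows with essentially no additional work beyond invoking Lemma~3.6.
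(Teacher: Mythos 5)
Your overall strategy is exactly the paper's: realize $\mathbb{C}_{\alpha}$ as a quotient $V_{1}/V_{2}$ of nested $B$-submodules of $\mathfrak{g}$ containing $\mathfrak{b}$ and invoke Lemma~3.6(1). But the concrete choice $V_{1}=\mathfrak{b}\oplus\mathfrak{g}_{\alpha}$, $V_{2}=\mathfrak{b}$ does not work, and the stability check you flag as ``the only real obstacle'' is where the argument breaks. In this paper $B$ is the Borel subgroup corresponding to the \emph{negative} roots, so $\mathfrak{b}=\mathfrak{h}\oplus\bigoplus_{\beta\in R^{+}}\mathfrak{g}_{-\beta}$ and the adjoint action of $U$ \emph{lowers} weights rather than raising them. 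Hence $[\mathfrak{b},\mathfrak{g}_{\alpha}]$ has components in $\mathfrak{g}_{\alpha-\beta}$ for $\beta\in R^{+}$, and when $\alpha$ is not simple such an $\alpha-\beta$ can be a positive root strictly below $\alpha$, whose root space lies neither in $\mathfrak{b}$ nor in $\mathfrak{g}_{\alpha}$. Concretely, in type $A_{2}$ with $\alpha=\alpha_{1}+\alpha_{2}$ one has $[\mathfrak{g}_{-\alpha_{1}},\mathfrak{g}_{\alpha_{1}+\alpha_{2}}]=\mathfrak{g}_{\alpha_{2}}\not\subseteq\mathfrak{b}\oplus\mathfrak{g}_{\alpha}$, so your $V_{1}$ is not $B$-stable. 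Your verification ``$[\mathfrak{b},\mathfrak{g}_{\alpha}]\subseteq\mathfrak{g}_{\alpha}+\sum_{\beta>0}\mathfrak{g}_{\alpha+\beta}$, and positive $\alpha+\beta$ lies in $\mathfrak{b}$'' silently uses the opposite (positive) Borel convention; and if one really did take the positive Borel, then $\mathfrak{g}_{\alpha}$ would already sit inside it and the quotient would be zero, so the argument fails under either reading.

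The repair is precisely the paper's choice: take $V_{1}=\bigoplus_{\mu\leq\alpha}\mathfrak{g}_{\mu}$ and $V_{2}=\bigoplus_{\mu<\alpha}\mathfrak{g}_{\mu}$, the sums of all weight spaces of weight $\leq\alpha$ (respectively $<\alpha$) in the dominance order. Since the action of $\mathfrak{b}$ only subtracts nonnegative combinations of simple roots from weights, both are $B$-submodules; both contain $\mathfrak{b}$ (every weight of $\mathfrak{b}$ is $<\alpha$); and $V_{1}/V_{2}\cong\mathfrak{g}_{\alpha}\cong\mathbb{C}_{\alpha}$ because $\mathfrak{g}_{\alpha}$ is the only weight space of weight exactly $\alpha$ and the induced unipotent action on the quotient is trivial. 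With this substitution your appeal to Lemma~3.6(1) goes through verbatim and yields $H^{i}(\tau,\alpha)=(0)$ for $i\geq 1$.
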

\begin{proof}
 Let $V_{1}:=\bigoplus_{\mu\leq \alpha}\mathfrak{g}_{\mu}$ denote the 
direct sum of the weight spaces of $\mathfrak{g}$ of weights $\mu$ satisfying
$\mu\leq \alpha$. 

Let $V_{2}:=\bigoplus_{\mu < \alpha}\mathfrak{g}_{\mu}$ denote the 
direct sum of the weight spaces of $\mathfrak{g}$ of weights $\mu$ satisfying
$\mu < \alpha$. 

It is clear that $V_{2}$ is a $B$-sub module of $\mathfrak{g}$ containing $\mathfrak{b}$ and $V_{1}$ is a $B$-sub module of $\mathfrak{g}$ containing $V_{2}$.

Since $\mathfrak{g}_{\alpha}$ is one dimensional and is isomorphic to
the quotient $V_{1}/V_{2}$, we have 

$H^{i}(\tau, \alpha)= H^{i}(\tau, \mathfrak{g}_{\alpha})=
H^{i}(\tau, V_{1}/V_{2})$ for every $i\geq 1$.
Hence, by lemma(3.6), $H^{i}(\tau, \alpha)=(0)$ for every 
$i\geq 1$.

This completes the proof of corollary.
\end{proof}

We now prove the following theorem. 

Let $\tau\in W$. Let $\alpha_{0}$ denote the highest root.

Then, we have

\begin{theorem}
\begin{enumerate}
Let $G$ be simple, simply connected and simply laced algebraic group over 
$\mathbb{C}$. 
Let $\tau \in W$.
\item
$H^{i}(X(\tau), \mathcal{T}_{G/B})=(0)$ for every 
$i\geq 1$. 
\item
$H^{0}(X(\tau) , \mathcal{T}_{G/B})$ is the adjoint representation 
$\mathfrak{g}$ of  $G$ if and only if the set of semi-stable points $X(\tau^{-1})_{T}^{ss}(\mathcal{L}_{\alpha_{0}})$ is non-empty. 
\end{enumerate}
\end{theorem}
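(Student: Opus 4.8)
The plan is to recall that $\mathcal{T}_{G/B}$ is the homogeneous bundle on $G/B$ associated to the $B$-module $\mathfrak{g}/\mathfrak{b}$, and that its restriction to $X(\tau)$ is $\mathcal{L}_\tau(\mathfrak{g}/\mathfrak{b})$. Taking $V_1 = \mathfrak{g}$ and $V_2 = \mathfrak{b}$ in the machinery of Lemma 3.6 (both are $B$-submodules of $\mathfrak{g}$ containing $\mathfrak{b}$), part (1) of that lemma immediately gives $H^i(\tau, \mathfrak{g}/\mathfrak{b}) = 0$ for all $i \geq 1$, which is statement (1) after passing from the Bott-Samelson scheme $Z(\tau)$ back to $X(\tau)$ via the isomorphisms ${\phi_\tau}_*\mathcal{O}_{Z(\tau)} = \mathcal{O}_{X(\tau)}$ and $R^q{\phi_\tau}_*\mathcal{O}_{Z(\tau)} = 0$ recalled in Section 2. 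For the $H^0$ part, part (2) of Lemma 3.6 yields the short exact sequence of $B$-modules
$$(0) \longrightarrow H^0(\tau, \mathfrak{b}) \longrightarrow H^0(\tau, \mathfrak{g}) \longrightarrow H^0(X(\tau), \mathcal{T}_{G/B}) \longrightarrow (0),$$
so I must identify $H^0(\tau, \mathfrak{g})$ and $H^0(\tau, \mathfrak{b})$ and compare dimensions with $\dim \mathfrak{g} = \dim \mathfrak{b} + \dim(\mathfrak{g}/\mathfrak{b})$.

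Next I would pin down $H^0(\tau, \mathfrak{g})$. By Lemma 3.3 the evaluation map $ev: H^0(\tau, \mathfrak{g}) \to \mathfrak{g}$ is injective, and its image is a $B$-submodule of $\mathfrak{g}$ containing $\mathfrak{b}$ (one checks $\mathfrak{b} \subseteq \mathrm{im}(ev)$ directly since the constant-type sections $\phi_v$ of Lemma 3.2 restricted to $X(\tau)$ still make sense for $v \in \mathfrak{b}$, or more cleanly: $H^0(\tau,\mathfrak{b}) \to \mathfrak{b}$ is onto because every weight of $\mathfrak{b}$ is $\leq 0$ on the relevant $SL_2$'s). So the question becomes: for which $\tau$ does $ev$ hit all of $\mathfrak{g}$, equivalently all of $\mathfrak{g}/\mathfrak{b}$? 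The weights of $\mathfrak{g}/\mathfrak{b}$ are the positive roots, and by the Key Lemma (Lemma 3.4) together with Corollary 3.7 each weight space $\mathfrak{g}_\alpha$ ($\alpha > 0$) either appears in $H^0(\tau, \mathfrak{g})$ or does not, and the obstruction to the top weight space $\mathfrak{g}_{\alpha_0}$ appearing propagates. The key point I would establish: $H^0(\tau, \mathfrak{g}) = \mathfrak{g}$ if and only if $\mathfrak{g}_{\alpha_0}$ (the highest root space) lies in the image of $ev$, because once the highest weight vector is a section, applying lowering operators through the $B_\gamma$-summand structure of Lemma 3.4 (types (2) and (5)) forces all lower weight spaces in, while $\mathfrak{b}$ is always there.

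The crux is then to translate "$\mathfrak{g}_{\alpha_0} \subseteq H^0(\tau,\mathfrak{g})$" into the GIT condition "$X(\tau^{-1})_T^{ss}(\mathcal{L}_{\alpha_0}) \neq \emptyset$". Here I would use that $H^0(\tau, \mathfrak{g}_{\alpha_0})$, being cohomology of the line bundle $\mathcal{L}_{\alpha_0}$ (up to the twist making it a genuine line bundle associated to the character $-\alpha_0$ or $\alpha_0$ depending on convention) on the Bott-Samelson/Schubert variety, is nonzero precisely when $\mathcal{L}_{\alpha_0}$ restricted to $X(\tau^{-1})$ has a nonzero $T$-invariant (or appropriately semi-invariant) section — and nonvanishing of such sections is exactly the nonemptiness of the semistable locus $X(\tau^{-1})_T^{ss}(\mathcal{L}_{\alpha_0})$ (this uses the standard duality/opposite-Schubert-variety relation $H^0(X(\tau),\cdot) \leftrightarrow X(\tau^{-1})$ and the definition of semistability via nonvanishing sections of powers of the linearized bundle, with $\alpha_0$ playing the role of a "almost dominant" weight whose only dominant-looking behaviour is at the highest root). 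The main obstacle I anticipate is precisely this last translation — making rigorous the equivalence between $\mathfrak{g}_{\alpha_0}$ occurring as a $B$-submodule of $H^0(X(\tau),\mathcal{T}_{G/B})$ and the GIT-semistability statement on the dual Schubert variety — since it requires carefully matching the $B$-module combinatorics coming from Lemma 3.4 with Seshadri-style semistability criteria and the $T$-fixed-point/weight analysis; the cohomology vanishing (statement (1)) and the "if the highest root space is in, everything is in" step are comparatively routine given Lemmas 3.4, 3.5, 3.6 and Corollary 3.7.
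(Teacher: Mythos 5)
Your part (1) is exactly the paper's argument (Lemma 3.6(1) applied with $V_{1}=\mathfrak{g}$, $V_{2}=\mathfrak{b}$, plus the Bott--Samelson comparison), and your short exact sequence $0\to H^{0}(\tau,\mathfrak{b})\to H^{0}(\tau,\mathfrak{g})\to H^{0}(\tau,\mathfrak{g}/\mathfrak{b})\to 0$ is the right starting point for (2). But the crux you then propose is misidentified. You ask ``for which $\tau$ does $ev:H^{0}(\tau,\mathfrak{g})\to\mathfrak{g}$ hit all of $\mathfrak{g}$'' and claim $H^{0}(\tau,\mathfrak{g})=\mathfrak{g}$ iff $\mathfrak{g}_{\alpha_{0}}$ lies in the image. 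This cannot carry the dichotomy: since $\mathfrak{g}$ is a $G$-module, the constant sections $\phi_{v}(g)=g^{-1}\cdot v$ restrict to $X(\tau)$ for every $v\in\mathfrak{g}$, so $ev$ is surjective and $H^{0}(\tau,\mathfrak{g})=\mathfrak{g}$ for \emph{every} $\tau$ (the paper gets this from Lemma 2.5(1)/Lemma 3.2 by induction, cf.\ Corollary 3.9); likewise the image of $\mathfrak{g}_{\alpha_{0}}$ in $H^{0}(X(\tau),\mathcal{T}_{G/B})$ is \emph{always} nonzero, because $\mathrm{Ker}(\Pi_{\tau})=H^{0}(\tau,\mathfrak{b})$ sits inside $\mathfrak{b}$ (Lemma 3.3) and $\mathfrak{g}_{\alpha_{0}}\not\subset\mathfrak{b}$. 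So ``the highest root space appears'' is unconditionally true and detects nothing. The genuine criterion lives at the opposite end of $\mathfrak{g}$: $H^{0}(X(\tau),\mathcal{T}_{G/B})\cong\mathfrak{g}$ iff $H^{0}(\tau,\mathfrak{b})=0$, and since any nonzero $B$-submodule of $\mathfrak{b}$ contains the unique $B$-stable line, which has weight $-\alpha_{0}$ (here $B$ is the Borel for the negative roots), this vanishing is equivalent to $-\alpha_{0}\notin\tau(R^{-})$, i.e.\ $\tau^{-1}(-\alpha_{0})\in R^{+}$. It is the lowest root space $\mathfrak{g}_{-\alpha_{0}}$, not $\mathfrak{g}_{\alpha_{0}}$, that governs the answer, and no ``lowering operators force everything in'' step is needed.

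The second gap is the GIT translation, which you yourself flag as the main obstacle but do not supply. The paper does not carry out any Seshadri-type semistability analysis or opposite-Schubert duality: it simply quotes [10, Lemma 2.1], which says $X(\tau^{-1})_{T}^{ss}(\mathcal{L}_{\alpha_{0}})\neq\emptyset$ if and only if $\tau^{-1}(-\alpha_{0})\in R^{+}$. With that citation, statement (2) reduces to matching two elementary root-combinatorial conditions, namely the vanishing criterion for $H^{0}(\tau,\mathfrak{b})$ above against $\tau^{-1}(-\alpha_{0})\in R^{+}$. Without either this citation or an independent proof of that criterion, and with the module-theoretic side of your proposed equivalence pointed at the wrong weight space, your plan for (2) does not close.
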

\begin{proof}

Since the tangent space of $G/B$ at the point $idB$ is 
$\mathfrak{g}/\mathfrak{b}$, the tangent bundle $\mathcal{T}_{G/B}$ 
is the homogeneous vector bundle $\mathcal{L}(\mathfrak{g}/\mathfrak{b})$ 
on $G/B$ associated to the $B$-module $\mathfrak{g}/\mathfrak{b}$.

Hence, it is sufficient to prove the following:
\begin{enumerate}
\item
$H^{i}(\tau, \mathfrak{g}/\mathfrak{b})=(0)$ for every $i\geq 1$. 
\item
$H^{0}(\tau , \mathfrak{g}/\mathfrak{b})$ is the adjoint representation 
$\mathfrak{g}$ of $G$ if and only if 
the set of semi stable points $X(\tau^{-1})_{T}^{ss}(\mathcal{L}_{\alpha_{0}})$ 
is non-empty. 
\end{enumerate}

We prove this now.

Let $V_{1}:=\mathfrak{g}$ and let $V_{2}:=\mathfrak{b}$.
The natural projection $\Pi:\mathfrak{g} \longrightarrow 
\mathfrak{g}/\mathfrak{b}$ of $B$-modules induces a 
homomorphism $\Pi_{\tau}:H^{0}(\tau, \mathfrak{g})
\longrightarrow H^{0}(\tau, \mathfrak{g}/\mathfrak{b})$ of $B$-modules.

Proof of (1) follows from lemma(3.6(1)).

Since the evaluation map $ev:H^{0}(\tau, \mathfrak{g})\longrightarrow 
\mathfrak{g}$ is an isomorphism, in order to prove (2),
it is sufficient to prove that the kerenel of the linear 
map $\Pi_{\tau}:H^{0}(\tau, \mathfrak{g}) \longrightarrow 
H^{0}(\tau, \mathfrak{g}/\mathfrak{b})$ is zero if and only if 
$X(\tau^{-1})_{T}^{ss}(\mathcal{L}_{\alpha_{0}})$ is non-empty.

We now show that the kerenel of the linear map 
$\Pi_{\tau}:H^{0}(\tau, \mathfrak{g}) \longrightarrow 
H^{0}(\tau, \mathfrak{g}/\mathfrak{b})$ is zero if and only 
if $X(\tau^{-1})_{T}^{ss}(\mathcal{L}_{\alpha_{0}})$ is non-empty.

By ([10], lemma(2.1)), $X(\tau^{-1})_{T}^{ss}(\mathcal{L}_{\alpha_{0}})$ is 
non empty if and only if $\tau^{-1}(-\alpha_{0})\in R^{+}$.

Hence, we have 

{\it Observation 1:}
$X(\tau^{-1})_{T}^{ss}(\mathcal{L}_{\alpha_{0}})$ is non empty
if and only if $-\alpha_{0}\in \tau(R^{+})$.

On the otherhand from lemma(3.6(2)), we have 
$Ker(\Pi_{\tau})=H^{0}(\tau, \mathfrak{b})$. Hence, using lemma(3.3), 
we see that $Ker(\Pi_{\tau})$ is a $B$-submodule of $\mathfrak{b}$. 

Since there is a unique $B$- stable line in $\mathfrak{b}$ and that is of 
weight $-\alpha_{0}$, we conclude that $Ker(\Pi_{\tau})$ is a non-zero 
$B$-submodule of $\mathfrak{b}$ if and only if the $-\alpha_{0}$-weight 
space of  $H^{0}(\tau, \mathfrak{b})$ is non zero.

Hence, $Ker(\Pi_{\tau})$ is non-zero if and only if 
$-\alpha_{0}\in \tau(R^{-})$.

Reformulating this statement, we have:

$Ker(\Pi_{\tau})$ is zero if and only if 
$-\alpha_{0}\in \tau(R^{+})$.

Using {\it Observation 1}, we see that 
$Ker(\Pi_{\tau})$ is zero if and only 
if the set of semi-stable points 
$X(\tau^{-1})_{T}^{ss}(\mathcal{L}_{\alpha_{0}})$ is non-empty.

This completes the proof of (2).

\end{proof}

Let $\tau\in W$. Let $\alpha_{0}$ denote the highest root.

Let $h^{0}(\tau , \alpha)$ denote the character of the $T$-module 
$H^{0}(\tau, \alpha)$. 

\begin{corollary}
$\sum_{\alpha\in R^{+}}h^{0}(\tau,\alpha)=Char(\mathfrak{g})$ if and only 
if the set of semi-stable points 
$X(\tau^{-1})_{T}^{ss}(\mathcal{L}_{\alpha_{0}})$ is non-empty.
\end{corollary}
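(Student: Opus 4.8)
The statement to prove, Corollary 3.10, asserts that the equivariant Euler-characteristic-type identity $\sum_{\alpha\in R^+} h^0(\tau,\alpha) = \mathrm{Char}(\mathfrak{g})$ holds precisely when $X(\tau^{-1})_T^{ss}(\mathcal{L}_{\alpha_0})\neq\emptyset$. The natural route is to bootstrap directly off Theorem 3.8 and the vanishing already established in Corollary 3.7, interpreting the left-hand side as a difference of traces on a filtration of $\mathfrak{g}$. First I would recall the $T$-stable filtration of $\mathfrak{g}$ whose successive quotients are the root spaces $\mathfrak{g}_\alpha$ for $\alpha\in R^+$ together with the Cartan $\mathfrak{h}$ and the negative root spaces; since the Borel used here is the one attached to $R^-$, the subalgebra $\mathfrak{b}$ has weights $\{-\alpha:\alpha\in R^+\}\cup\{0\}$, and $\mathfrak{g}/\mathfrak{b}$ has exactly the weights of the positive root spaces. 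Thus, as $T$-characters,
\[
\mathrm{Char}(\mathfrak{g}/\mathfrak{b}) \;=\; \sum_{\alpha\in R^+} \mathrm{Char}(\mathfrak{g}_\alpha)\,,
\]
and since each $\mathfrak{g}_\alpha\cong V_1/V_2$ in the notation of Corollary 3.7, one may build $\mathfrak{g}/\mathfrak{b}$ by iterating the short exact sequences of that corollary.

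\textbf{Key steps.} Step one: using the surjection in Lemma 3.6(2) applied repeatedly along the filtration (or equivalently using the long exact sequence in Lemma 3.6 together with the higher vanishing of Corollary 3.7), observe that $\mathrm{Char}\,H^0(\tau,\mathfrak{g}/\mathfrak{b}) = \sum_{\alpha\in R^+} \mathrm{Char}\,H^0(\tau,\mathfrak{g}_\alpha) = \sum_{\alpha\in R^+} h^0(\tau,\alpha)$; the higher cohomology terms never contribute because they all vanish, so the alternating sum collapses to the $H^0$ term. Step two: invoke Theorem 3.8(2), which says $H^0(\tau,\mathfrak{g}/\mathfrak{b})$ equals the adjoint representation $\mathfrak{g}$ exactly when $X(\tau^{-1})_T^{ss}(\mathcal{L}_{\alpha_0})\neq\emptyset$. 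Step three: reconcile the character statement with the module statement. One direction is immediate: if $H^0(\tau,\mathfrak{g}/\mathfrak{b})\cong\mathfrak{g}$ as a module then certainly their characters agree, giving $\sum_\alpha h^0(\tau,\alpha)=\mathrm{Char}(\mathfrak{g})$. For the converse I would use that, by Lemma 3.3 and the proof of Theorem 3.8, the evaluation map $ev\colon H^0(\tau,\mathfrak{g})\to\mathfrak{g}$ is an isomorphism and $\Pi_\tau\colon H^0(\tau,\mathfrak{g})\to H^0(\tau,\mathfrak{g}/\mathfrak{b})$ is surjective with kernel $H^0(\tau,\mathfrak{b})$, a $B$-submodule of $\mathfrak{b}$; hence $\dim H^0(\tau,\mathfrak{g}/\mathfrak{b}) = \dim\mathfrak{g} - \dim H^0(\tau,\mathfrak{b})$. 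If the characters satisfy $\sum_\alpha h^0(\tau,\alpha) = \mathrm{Char}(\mathfrak{g})$, then in particular the total dimensions agree, forcing $H^0(\tau,\mathfrak{b})=(0)$, i.e. $Ker(\Pi_\tau)=(0)$, which by the chain of equivalences in the proof of Theorem 3.8 is exactly the condition $X(\tau^{-1})_T^{ss}(\mathcal{L}_{\alpha_0})\neq\emptyset$.

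\textbf{Main obstacle.} The only subtlety is making sure the "character equality $\Rightarrow$ dimension equality $\Rightarrow$ $H^0(\tau,\mathfrak{b})=0$" step is airtight: one must know that $H^0(\tau,\mathfrak{b})$ is a $B$-submodule of $\mathfrak{b}$ (so that its vanishing is detected by the $-\alpha_0$ weight, per the argument in Theorem 3.8), and that $\Pi_\tau$ is genuinely surjective with exactly that kernel — both of which are supplied by Lemma 3.6(2) and Lemma 3.3 as used in Theorem 3.8. Since weight multiplicities are nonnegative integers, comparing the full characters term-by-term and comparing total dimensions are equivalent here, so no delicate cancellation can occur. Everything else is a mechanical assembly of Corollary 3.7, Lemma 3.6, and Theorem 3.8, so I expect the write-up to be short.
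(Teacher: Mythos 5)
Your proposal is correct and follows essentially the same route as the paper: the $T$-stable filtration of $\mathfrak{g}/\mathfrak{b}$ by positive root spaces together with the higher vanishing to get $\mathrm{Char}\,H^{0}(\tau,\mathfrak{g}/\mathfrak{b})=\sum_{\alpha\in R^{+}}h^{0}(\tau,\alpha)$, the identification $H^{0}(\tau,\mathfrak{g})=\mathfrak{g}$, and the kernel argument $\mathrm{Ker}(\Pi_{\tau})=H^{0}(\tau,\mathfrak{b})$ from the proof of the main theorem to convert the character equality into the non-emptiness of $X(\tau^{-1})_{T}^{ss}(\mathcal{L}_{\alpha_{0}})$. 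Your explicit dimension-count reformulation of the converse is just a clean restatement of the step the paper cites as ``Step 1,'' so no genuinely different ideas are involved.
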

\begin{proof}
Let $U^{+}$ denote the unipotent radical of $B^{+}$.
Let $\mathfrak{u}^{+}$ denote the Lie algebra of $U^{+}$. 

Since the natural map $\mathfrak{u}^{+}\longrightarrow \mathfrak{g}/\mathfrak{b}$ is an isomorphism, there is a total ordering 
$\{\beta_{1}, \beta_{2}, \cdots \beta_{N}\}$ of positive roots $R^{+}$ 
such that the $B$-module $\mathfrak{g}/\mathfrak{b})$ has a filtration of 
sub modules $V_{0}:=\mathfrak{g}/\mathfrak{b}\supset V_{1}\supset V_{2}\supset 
\cdots V_{N-1}\supset V_{N}=(0)$ with each successive quotients 
$V_{i}/V_{i+1}$ is one dimensional and is isomorphic to 
$\mathfrak{g}_{\beta_{i}}$.
Hence, we have $H^{j}(\tau, V_{i}/V_{i+1})=H^{j}(\tau, \beta_{i})$.

Using corollary(3.4), we have $H^{j}(\tau, V_{i}/V_{i+1})=(0)$ for every 
$j\geq 1$ and for every $i=1,2, \cdots N-1$. 

Hence, $H^{0}(\tau, \mathfrak{g}/\mathfrak{b})$ has a filtration of $B$-sub 
modules  $H^{0}(\tau, \mathfrak{g}/\mathfrak{b})\supset 
H^{0}(\tau,V_{1})\supset H^{0}(\tau,V_{2})\supset \cdots H^{0}(\tau,V_{N-1})\supset (0)$ such that each successive quotient 
$H^{0}(\tau, V_{i})/H^{0}(\tau, V_{i+1})$ is isomorphic to 
$H^{0}(\tau, \beta_{i})$.

Hence, we have 

{\it Observation 2}
$Char(H^{0}(\tau, \mathfrak{g}/\mathfrak{b}))=\sum_{i=1}^{N}Char(H^{0}(\tau, \beta_{i}))= \sum_{\alpha\in R^{+}}Char(H^{0}(\tau,\alpha))$.

Using lemma(2.5(1)), we have $H^{0}(\tau, \mathfrak{g})=\mathfrak{g}$
when ever $l(\tau)=1$. Now, using induction on $l(\tau)$, and again 
lemma(2.5(1)) successively, we conclude that 
$H^{0}(\tau, \mathfrak{g})=\mathfrak{g}$ for every $\tau\in W$.

Therefore, we have
$Char(H^{0}(\tau, \mathfrak{g}))=Char(\mathfrak{g})$. Hence, using 
{\it Step 1}, we see that 
$Char(\mathfrak{g})=Char(H^{0}(\tau, \mathfrak{g}/\mathfrak{b}))$ 
if and only if $X(\tau^{-1})_{T}^{ss}(\mathcal{L}_{\alpha_{0}})$ is non-empty.

Proof of theorem follows using {\it Observation 2} in the above statement.

\end{proof}

\section{Schubert varieties related to maximal parabolic subgroups:}

In this section, we apply the main theorem to certain Schubert varieties 
related to maximal parabolic subgroups of $G$. For a precise statement, 
see theorem(4.2).

\begin{lemma}
Let $w\in W$, and let $\beta$ be a positive root. Let $\gamma$ be 
a simple root $\gamma$ such that $l(ws_{\gamma})=l(w)-1$ and 
$\langle \beta, \gamma \rangle = -1$. Then, we have $H^{i}(w, \beta)=0$ 
for every $i\geq 0$. 
\end{lemma}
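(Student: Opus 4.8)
The plan is to use the descending 1-step construction together with the Demazure exact sequences, exactly as was done in the proof of Lemma~3.4 and Corollary~3.8, but now keeping track of the precise $B_\gamma$-module structure of $H^0(ws_\gamma,\beta)$. Write $\tau = w$ and $\mu = ws_\gamma$, so $l(\mu) = l(\tau) - 1$ and $\tau = s_?\cdots$; more precisely $w = \mu s_\gamma$ with $l(\mu s_\gamma) = l(\mu)+1$. By Lemma~2.2(1), since $\langle \beta,\gamma\rangle = -1$ is not $\geq 0$ we cannot directly push $\beta$ up through $s_\gamma$; instead the natural move is to work with the $B_\gamma$-module $\mathfrak{g}_\beta$ sitting inside $\mathfrak{g}$ (or inside a suitable $B$-submodule $V_1/V_2$ as in Corollary~3.8) and to analyze $H^0(\mu, \mathfrak g_\beta \oplus \mathfrak g_{\beta-\gamma})$ or the relevant indecomposable $B_\gamma$-summand.

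First I would reduce, exactly as in Corollary~3.8, to computing $H^i(w,\mathfrak g_\beta)$ where $\mathfrak g_\beta \cong V_1/V_2$ for $B$-submodules $V_2 \subset V_1 \subset \mathfrak g$ containing $\mathfrak b$, so that Lemma~3.6 applies and gives $H^i(w,\beta) = 0$ for $i \geq 1$ automatically; the content is then the vanishing of $H^0(w,\beta)$. Next, using the descending construction for the map $f_\gamma : Z(w) \to Z(\mu)$ (or equivalently the $\mathbf P^1$-fibration coming from the factorization $w = \mu s_\gamma$), I would write $H^0(w,\mathfrak g_\beta)$ in terms of $H^0(P_\gamma/B, \mathcal L_\mu(\mathfrak g_\beta))$, which reduces to understanding the indecomposable $B_\gamma$-summand $V'$ of $H^0(\mu,\mathfrak g_\beta)$ through which $\mathfrak g_\beta$ (or its image) is detected. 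By Lemma~3.4 applied with the simple root $\gamma$, every indecomposable $B_\gamma$-summand of $H^0(\mu,\mathfrak g_\beta)$ is one of the five listed types. The key point is that, since $\langle \beta,\gamma\rangle = -1$, the weight $\beta$ satisfies $\langle \beta,\gamma\rangle = -1$, so any $B_\gamma$-summand $V'$ carrying the weight $\beta$ must be of type (3) with $\mathfrak g_\beta$ itself as an indecomposable one-dimensional $B_\gamma$-summand on which $\langle \beta,\gamma\rangle = -1$; then Lemma~2.3(3) (equivalently Lemma~2.2(3)) gives $H^j(P_\gamma/B, V') = 0$ for all $j \geq 0$, hence $H^j(P_\gamma/B, H^0(\mu,\mathfrak g_\beta)) = 0$ for the relevant summand.

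Finally, feeding this into the short exact sequence {\it SES} (or the Leray spectral sequence for $f_\gamma$) together with the inductively-known vanishing of higher cohomology from Lemma~3.5, I would conclude $H^i(w,\beta) = 0$ for every $i \geq 0$. The main obstacle I anticipate is bookkeeping: one must be careful that $\mathfrak g_\beta$ really does sit as a type-(3) summand (and not, say, get absorbed into a type-(4) summand $\mathfrak g_{\beta'} \oplus \mathfrak g_{\beta'-\gamma}$ where the $\langle\,,\,\rangle = -1$ cancellation fails); this is where the hypothesis $\langle\beta,\gamma\rangle = -1$ combined with Lemma~2.6 (so that $\beta - \gamma$ and $\beta + \gamma$ behave as expected in the $\gamma$-root string through $\beta$) does the work, since $\langle\beta,\gamma\rangle = -1$ forces $\beta$ to be the \emph{lowest} weight of its $\gamma$-string inside that summand, making $\mathfrak g_\beta$ a genuine $B_\gamma$-submodule rather than a quotient. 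Once that structural point is pinned down, the cohomology vanishing is immediate from Lemma~2.3(3).
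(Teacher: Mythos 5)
The paper's proof of this lemma is a one-line application of Lemma 2.2(3): the hypothesis $l(ws_{\gamma})=l(w)-1$ means $w=(ws_{\gamma})s_{\gamma}$ is a length-additive factorization, so $w$ plays the role of $\tau=w's_{\alpha}$ in Lemma 2.2 with $\alpha=\gamma$ and $\lambda=\beta$; since $\langle\beta,\gamma\rangle=-1$, all $H^{i}(w,\beta)$ vanish at once. Geometrically, in the descending construction $f:Z(w)\to Z(ws_{\gamma})$ the sheaf $\mathcal{L}_{\beta}$ restricts to $\mathcal{O}(-1)$ on each fibre $P_{\gamma}/B$, so every direct image $R^{j}f_{*}\mathcal{L}_{\beta}$ is zero and Leray kills everything. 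No analysis of $B_{\gamma}$-module structure, and no simply-laced hypothesis, is needed.

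Your proposal reaches the right key fact (the $\langle\,\cdot\,,\gamma\rangle=-1$ vanishing on $P_{\gamma}/B$) but embeds it in a framework with a genuine gap. First, you use the two steps of the descending construction in the wrong order: you propose to compute $H^{0}(\mu,\mathfrak{g}_{\beta})$ for $\mu=ws_{\gamma}$ first and then apply $H^{*}(P_{\gamma}/B,-)$ to its $B_{\gamma}$-summands. That order of operations computes cohomology for the word $s_{\gamma}\,(ws_{\gamma})$, which is in general a different Weyl group element from $w$ (the ascending construction requires $s_{\gamma}$ to be a left descent, which is not the hypothesis here); the correct order takes fibre cohomology of the character $\beta$ itself first, where the vanishing is immediate. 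Second, the structural input you rely on is not available: Lemma 3.4 is stated only for $B$-submodules $V$ of $\mathfrak{g}$ containing $\mathfrak{b}$, so it does not apply to $H^{0}(\mu,\mathfrak{g}_{\beta})$, and your claim that the weight-$\beta$ line must sit as a type-(3) summand (rather than inside a two-dimensional summand, on which $H^{0}(P_{\gamma}/B,-)$ would not vanish) is asserted but not proved --- the weights of $H^{0}(\mu,\mathbb{C}_{\beta})$ need not be roots, so the root-string argument you invoke does not settle it. Finally, the reduction of the $i\geq 1$ cases via Lemma 3.6/Corollary 3.7 is superfluous and imports the simply-laced hypothesis, which the lemma and its direct proof do not require. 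The repair is simply to apply Lemma 2.2(3) (or the descending construction in the correct order) directly to the character $\beta$ on $Z(w)$.
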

\begin{proof}

Proof of this lemma follows from lemma(2.2(3)).

\end{proof}

Let $\alpha\in S$. Let $Q_{\alpha}$ denote the maximal parabolic subgroup of $G$
containing $B$ all $s_{\beta}$, where $\beta$ running over all simple roots
different from $\alpha$. 

Let $w_{\alpha}$ denote the unique minimal representative of the longest 
element $w_{0}$ of $W$ with respect to the maximal parabolic subgroup
$Q_{\alpha}$.

Recall that $R^{+}(\tau):=\{\beta \in R^{+}:\tau(\beta)\in -R^{+}\}$.

Let $\tau\in W$ be such that $\tau\geq w_{\alpha}$.
The following theorem describe the character of $\mathfrak{g}$ in terms of 
the sum of characters $h^{0}(\tau, \beta)$ of $H^{0}(\tau,\beta)$, $\beta$ 
running over all elements of $R^{+}(\tau)$.

\begin{theorem} For any $\tau\geq w_{\alpha}$, we have 
$\sum_{\beta\in R^{+}(\tau)}h^{0}(\tau,\beta))
=Char(\mathfrak{g})$
\end{theorem}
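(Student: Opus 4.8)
The plan is to reduce the statement to Corollary 3.9 by showing that the roots in $R^{+}\setminus R^{+}(\tau)$ contribute nothing, i.e. that $H^{0}(\tau,\beta)=(0)$ for every positive root $\beta$ with $\tau(\beta)\in R^{+}$, under the hypothesis $\tau\ge w_{\alpha}$. Granting this, Corollary 3.9 gives $\sum_{\beta\in R^{+}}h^{0}(\tau,\beta)=Char(\mathfrak g)$ if and only if $X(\tau^{-1})^{ss}_{T}(\mathcal L_{\alpha_{0}})$ is non-empty, and the vanishing just proved lets us replace the left-hand sum by $\sum_{\beta\in R^{+}(\tau)}h^{0}(\tau,\beta)$; so it remains only to check that $\tau\ge w_{\alpha}$ forces $X(\tau^{-1})^{ss}_{T}(\mathcal L_{\alpha_{0}})$ to be non-empty. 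By Observation~1 in the proof of Theorem~3.8 (equivalently \cite[lemma~2.1]{10}), the latter amounts to $\tau^{-1}(-\alpha_{0})\in R^{+}$, i.e.\ $-\alpha_{0}\in\tau(R^{+})$.

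First I would settle the vanishing of $H^{0}(\tau,\beta)$ for $\tau(\beta)>0$. The natural tool is the descending $1$-step construction together with Lemma~2.2(3): I pick a reduced expression for $\tau$ and peel off simple reflections from the right, $\tau=\cdots s_{\gamma}$, tracking the weight $\beta$ under successive $s_{\gamma}$. Since $\tau(\beta)>0$ and $\beta>0$, at some stage in the reduction the running weight pairs to $-1$ against the simple root being removed (this is the standard fact that $w(\beta)>0$, $\beta>0$ imply that along any reduced word for $w$ the value $\langle\cdot,\gamma_j\rangle$ cannot jump from positive to negative without passing through a step that kills the cohomology, using that $G$ is simply laced so the only values are $-1,0,1$ by Lemma~2.6); applying Lemma~4.1 / Lemma~2.2(3) at that step yields $H^{i}(\tau,\beta)=(0)$ for all $i\ge 0$. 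I would phrase this as an induction on $l(\tau)$: if $s_{\gamma}$ is removed from the right and $\langle\beta,\gamma\rangle=-1$ we are done by Lemma~4.1; otherwise $\langle\beta,\gamma\rangle\in\{0,1\}$, and $H^{0}(\tau,\beta)=H^{0}(s_{\gamma}\tau,\beta)$ or $H^{0}(s_{\gamma}\tau,V_{\beta,\gamma})$ via Lemma~2.2(1), where $s_{\gamma}\tau$ still sends the relevant weight(s) into $R^{+}$, so the inductive hypothesis applies.

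Second, I would verify $-\alpha_{0}\in\tau(R^{+})$ when $\tau\ge w_{\alpha}$. Here $w_{\alpha}$ is the minimal coset representative of $w_{0}$ modulo $Q_{\alpha}$, so $w_{\alpha}^{-1}$ carries the fundamental weight $\omega_{\alpha}$ to an extreme weight; concretely $R^{+}(w_{\alpha})$ is exactly the set of positive roots whose support contains $\alpha$, and since $\alpha_{0}$ has full support it lies in $R^{+}(w_{\alpha})$, giving $w_{\alpha}(\alpha_{0})\in -R^{+}$, i.e.\ $-\alpha_{0}\in w_{\alpha}(R^{+})$. The inequality $\tau\ge w_{\alpha}$ gives $R^{+}(\tau)\supseteq R^{+}(w_{\alpha})$ (monotonicity of inversion sets along the Bruhat order for $\tau\ge w_{\alpha}$, using that $w_{\alpha}$ is the minimal representative), hence $\alpha_{0}\in R^{+}(\tau)$ and $-\alpha_{0}\in\tau(R^{+})$ as needed. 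Then Corollary~3.9 closes the argument.

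\textbf{Main obstacle.} The delicate point is the first step: making precise and correct the claim that $\beta>0$ with $\tau(\beta)>0$ guarantees that some right-to-left peeling of a reduced word for $\tau$ hits a simple root $\gamma$ with $l(\cdot s_{\gamma})$ decreasing and $\langle\beta_{\text{running}},\gamma\rangle=-1$, so that Lemma~4.1 applies; I expect to spend most of the effort choosing the reduced expression carefully (or reformulating the induction so that at each stage either Lemma~2.2(1) reduces the length while keeping the weight positive on the image, or Lemma~2.2(3) kills everything) and in verifying $R^{+}(\tau)\supseteq R^{+}(w_{\alpha})$ and $\alpha_{0}\in R^{+}(w_{\alpha})$, which are purely combinatorial facts about the Bruhat order and supports of roots but need to be stated cleanly.
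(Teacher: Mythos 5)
Your overall skeleton (reduce to Corollary 3.9 plus the vanishing $H^{0}(\tau,\beta)=(0)$ for $\beta\notin R^{+}(\tau)$, and check that $\tau\geq w_{\alpha}$ forces the semistable locus to be non-empty) is the same as the paper's, but your proof of the vanishing step has a genuine gap. The induction you set up peels the rightmost letter of a reduced word for $\tau$ and neither uses nor preserves the hypothesis $\tau\geq w_{\alpha}$; without that hypothesis the statement you are inducting on is false (for instance $H^{0}(id,\beta)=\mathbb{C}_{\beta}$, and $H^{0}(s_{\gamma},\beta)\neq(0)$ whenever $\langle\beta,\gamma\rangle\geq 0$), so the ``standard fact'' you invoke --- that the running weight must at some stage pair to $-1$ with the peeled simple root --- is not a fact: nothing forces the $-1$ case to occur, and your induction can reach the identity with a nonzero module. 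The hypothesis $\tau\geq w_{\alpha}$ is exactly what rules this out, and the paper uses it in a different way: since $R^{+}(w_{\alpha})=\{\nu\in R^{+}:\nu\geq\alpha\}\subseteq R^{+}(\tau)$ (this containment itself needs the factorization $\tau=w_{\alpha}v$, $v\in W_{Q_{\alpha}}$, $l(\tau)=l(w_{\alpha})+l(v)$, not general Bruhat monotonicity of inversion sets), any $\beta\notin R^{+}(\tau)$ satisfies $\beta\not\geq\alpha$, hence $\beta\neq\alpha_{0}$, so one may choose a simple root $\gamma$ with $\langle\beta,\gamma\rangle=-1$ adapted to $\beta$ rather than to a reduced word. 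If $l(\tau s_{\gamma})=l(\tau)-1$, Lemma 4.1 kills everything; if $l(\tau s_{\gamma})=l(\tau)+1$, the Demazure sequence $(0)\to\mathbb{C}_{\beta}\to H^{0}(s_{\gamma},s_{\gamma}(\beta))\to\mathbb{C}_{s_{\gamma}(\beta)}\to(0)$ gives an embedding $H^{0}(\tau,\beta)\hookrightarrow H^{0}(\tau s_{\gamma},s_{\gamma}(\beta))$, and the pair $(\tau s_{\gamma},s_{\gamma}(\beta))$ again satisfies the hypotheses, so a descending induction on $l(w_{0})-l(\tau)$ --- which does preserve $\tau\geq w_{\alpha}$ --- concludes. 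Your downward peeling cannot be repaired without importing this idea.

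There is also a smaller slip in your second step: from $\alpha_{0}\in R^{+}(\tau)$ (i.e.\ $\tau(\alpha_{0})<0$) you infer $-\alpha_{0}\in\tau(R^{+})$ (i.e.\ $\tau^{-1}(\alpha_{0})<0$). These conditions are not equivalent for a general Weyl group element: in type $A_{3}$ the permutation $[3,1,4,2]$ sends the highest root to a negative root while its inverse sends it to a positive one. The statement you need is nevertheless true under $\tau\geq w_{\alpha}$, e.g.\ by writing $\tau=w_{\alpha}v$ with $v\in W_{Q_{\alpha}}$: the paper's computation $\langle\omega_{\alpha},w_{\alpha}^{-1}(\alpha_{0})\rangle\leq-1$ shows $w_{\alpha}^{-1}(\alpha_{0})$ is a negative root with nonzero $\alpha$-coefficient, and $v^{-1}$ keeps it negative; alternatively, $X(w_{\alpha}^{-1})\subseteq X(\tau^{-1})$, so non-emptiness of the semistable locus passes from $w_{\alpha}^{-1}$ to $\tau^{-1}$. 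With these two repairs your plan becomes the paper's proof.
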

\begin{proof}
{\it Step 1:}

We first show that $\sum_{\beta\in R^{+}}Char(H^{0}(\tau,\beta))
=Char(\mathfrak{g})$.

Since $w_{\alpha}(\omega_{\alpha})=w_{0}(\omega_{\alpha})$, it must be a 
non trivial negative dominant
character of $T$.

Since $\alpha_{0}\geq \nu$ for every simple root $\nu$,
$\langle w_{\alpha} (\omega_{\alpha}), \alpha_{0} \rangle\leq -1$.
Since the form $\langle , \rangle$ is $W$-invariant, 
$\langle \omega_{\alpha} , w_{\alpha}^{-1}(\alpha_{0}) \rangle\leq -1$.
Hence, $w_{\alpha}^{-1}(\alpha_{0})$ must be a negative root.

Using ([10], lemma(2.1)), we conclude that 
$X(w_{\alpha}^{-1})_{T}^{ss}(\mathcal{L}_{\alpha_{0}})$ is non empty.

By theorem (3.8), we conclude that 
$\sum_{\beta\in R^{+}(\tau)}Char(H^{0}(\tau,\beta))
=Char(\mathfrak{g})$.

This proves {\it Step 1}.

{\it Step 2:}

We now show that $H^{0}(\tau, \beta)=(0)$
for every $\beta\notin R^{+}(\tau)$.

We first note that a $\beta\in R^{+}$ belongs to $R^{+}(w_{\alpha})$
if and only if $\alpha \leq \beta$.

So, the highest root $\alpha_{0}$ lies in the set $R^{+}(w_{\alpha})$.

Proof of {\it Step 2} is by descending induction on $l(\tau)$.

Since $\tau\geq w_{\alpha}$ and since 
$R^{+}(w_{\alpha})=\{\nu \in R^{+}:\nu \geq \alpha\}$, we have 
$R^{+}(w_{\alpha})\subset R^{+}(\tau)$.

Now, since $\beta\notin R^{+}(\tau)$, we have $\beta \ngeq \alpha$
So, $\beta$ must be different from $\alpha_{0}$. Hence, there is 
a simple root $\gamma$ such that $\langle \beta, \gamma \rangle = -1$. 

If  $l(\tau s_{\gamma})=l(\tau)-1$, we have  
$\langle \beta, \gamma \rangle = -1$. 
By using lemma(4.1), we see that 
$H^{0}(\tau , \beta)=(0)$. 

Otherwise, we have $l(\tau s_{\gamma})=l(\tau)+1$.
So, we have $\tau s_{\gamma} \geq w_{\alpha}$ and 
$l(w_{0})-l(\tau s_{\gamma})=l(w_{0})-l(\tau)-1$.
Now, since $s_{\gamma}(\beta)\notin R^{+}(\tau s_{\gamma})$, 
using induction on $l(w_{0})-l(\tau)$, we have 

{\it Observation :}

$H^{0}(\tau s_{\gamma}, s_{\gamma}(\beta))=(0)$.

We now consider the following short exact sequence of $B$-modules:

$$(0)\longrightarrow \mathbb{C}_{\beta}\longrightarrow 
H^{0}(s_{\gamma},s_{\gamma}(\beta))\longrightarrow \mathbb{C}_{s_{\gamma}(\beta)}
\longrightarrow (0).$$

Applying $H^{0}(\tau, )$ to this short exact sequence and using 
corollary (3.4), we obtain the following short exact sequence of $B$-modules:

$$(0)\longrightarrow H^{0}(\tau, \beta)\longrightarrow 
H^{0}(\tau , H^{0}(s_{\gamma},s_{\gamma}(\beta)))\longrightarrow 
H^{0}(\tau, s_{\gamma}(\beta)) \longrightarrow (0).$$

Since $H^{0}(\tau, H^{0}(s_{\gamma},s_{\gamma}(\beta)))=
H^{0}(\tau s_{\gamma}, s_{\gamma}(\beta))$, the above short exact 
sequence can be written as:

$$(0)\longrightarrow H^{0}(\tau, \beta)\longrightarrow 
H^{0}(\tau s_{\gamma}, s_{\gamma}(\beta)))\longrightarrow 
H^{0}(\tau, s_{\gamma}(\beta)) \longrightarrow (0).$$

Now, from {\it Observation }, we have 
$H^{0}(\tau s_{\gamma}, s_{\gamma}(\beta))=(0)$.
Using this in the short exact sequence,
we conclude that $H^{0}(\tau , \beta)=(0)$.

This proves {\it Step2}.

Proof of theorem follows from {\it Step 1} and {\it Step2}.

\end{proof}

Let $G$ be a simple, simply connected and simply lacecd algebraic group over 
$\mathbb{C}$. Let $\alpha$ be a simple root.

Let $Q_{\alpha}$ be the maximal parabolic sub group of $G$ containing $B$ 
and all $s_{\beta}$, $\beta$ running over all simple roots different from 
$\alpha$.

We derive the following corollary as an application of theorem(4.2).

\begin{corollary}
Let $\mathcal{T}_{G/Q_{\alpha}}$ denote the tangent bundle of 
$G/Q_{\alpha}$. Then, we have
\begin{enumerate}
\item
$H^{i}(G/Q_{\alpha}, \mathcal{T}_{G/Q_{\alpha}})=(0)$ for every 
$i\geq 1$. 
\item
$H^{0}(G/Q_{\alpha} , \mathcal{T}_{G/Q_{\alpha}})$ is the adjoint representation 
$\mathfrak{g}$ of  $G$. 
\end{enumerate}
\end{corollary}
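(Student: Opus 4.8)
The plan is to deduce Corollary~4.3 from Theorem~4.2 together with the machinery of the Bott--Samelson resolution. The key point is to connect the cohomology of the tangent bundle of the partial flag variety $G/Q_\alpha$ with the cohomology $H^i(X(\tau),\mathcal{T}_{G/B})$ on the Schubert variety $X(\tau)$ for a well-chosen $\tau$. First I would take $\tau = w_\alpha$, the minimal coset representative of $w_0$ modulo $Q_\alpha$, so that $X(w_\alpha)$ is the image of the natural map $G/B \to G/Q_\alpha$ restricted appropriately; more precisely, the canonical projection $\pi:G/B\to G/Q_\alpha$ restricts to a birational morphism (in fact the fibration over the big cell degenerates) and, since $w_\alpha$ is the longest element with $R^+(w_\alpha) = \{\beta\in R^+:\beta\ge\alpha\}$, the Schubert variety $X(w_\alpha)$ maps onto $G/Q_\alpha$. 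I would then use the standard fact that $\pi_*\mathcal{O}_{X(w_\alpha)} = \mathcal{O}_{G/Q_\alpha}$ with vanishing higher direct images, so that the Leray spectral sequence reduces the computation of $H^i(G/Q_\alpha,\mathcal{T}_{G/Q_\alpha})$ to that of the pullback $\pi^*\mathcal{T}_{G/Q_\alpha}$ on $X(w_\alpha)$.

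Next I would compare $\pi^*\mathcal{T}_{G/Q_\alpha}$ with $\mathcal{T}_{G/B}$ restricted to $X(w_\alpha)$. At the level of $B$-modules, the fibre of $\mathcal{T}_{G/Q_\alpha}$ at the base point is $\mathfrak{g}/\mathfrak{q}_\alpha$, whereas that of $\mathcal{T}_{G/B}$ is $\mathfrak{g}/\mathfrak{b}$; the relative tangent bundle of $\pi$ is associated to $\mathfrak{q}_\alpha/\mathfrak{b}$, giving a short exact sequence of $B$-modules $(0)\to\mathfrak{q}_\alpha/\mathfrak{b}\to\mathfrak{g}/\mathfrak{b}\to\mathfrak{g}/\mathfrak{q}_\alpha\to(0)$. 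Applying $H^i(w_\alpha,-)$ and invoking Corollary~3.4 (vanishing of higher cohomology of all the one-dimensional pieces $\mathfrak{g}_\beta$, hence of $\mathfrak{q}_\alpha/\mathfrak{b}$ and of $\mathfrak{g}/\mathfrak{q}_\alpha$ by dévissage as in Lemma~3.6), I obtain $H^i(w_\alpha,\mathfrak{g}/\mathfrak{q}_\alpha)=(0)$ for $i\ge 1$, which yields part (1). For part (2), I would combine the isomorphism $H^0(w_\alpha,\mathfrak{g})\cong\mathfrak{g}$ (Lemma~2.5(1) iterated, as in the proof of Corollary~3.9) with Theorem~4.2 applied to $\tau=w_\alpha$: since $w_\alpha^{-1}(\alpha_0)$ is a negative root (shown in Step~1 of the proof of Theorem~4.2), the semistable locus $X(w_\alpha^{-1})^{ss}_T(\mathcal{L}_{\alpha_0})$ is non-empty, so Theorem~3.8(2) gives $H^0(w_\alpha,\mathfrak{g}/\mathfrak{b})=\mathfrak{g}$; the extra dévissage for the quotient $\mathfrak{g}/\mathfrak{q}_\alpha$ then shows $H^0(w_\alpha,\mathfrak{g}/\mathfrak{q}_\alpha)=\mathfrak{g}$ as a $G$-module, because the character count from Theorem~4.2 forces the filtration quotients to add up correctly and $G$-equivariance pins down the module structure via Lemma~3.2.

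The main obstacle I anticipate is the passage from the character identity of Theorem~4.2 to the actual $G$-module structure of $H^0(G/Q_\alpha,\mathcal{T}_{G/Q_\alpha})$: Theorem~4.2 and Corollary~3.9 give only an equality of characters, whereas the corollary asserts a module isomorphism with $\mathfrak{g}$. I would resolve this by noting that $G/Q_\alpha$ is a complete homogeneous space, so $H^0(G/Q_\alpha,\mathcal{T}_{G/Q_\alpha})$ is automatically a $G$-module, and then applying Lemma~3.2 to identify any $G$-module with prescribed character and a $G$-equivariant evaluation map to $\mathfrak{g}/\mathfrak{q}_\alpha$; since $\mathfrak{g}$ is irreducible (as $G$ is simple) the character equality upgrades to an isomorphism. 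A secondary technical point is verifying that $\pi$ restricted to $X(w_\alpha)$ genuinely has the cohomological properties of a rational resolution onto $G/Q_\alpha$; this is standard (it follows from $X(w_\alpha)$ being normal with rational singularities and $\pi$ being birational proper onto the normal variety $G/Q_\alpha$, cf.\ [7, II, Chapter 14]), but I would state it carefully since the whole reduction rests on it.
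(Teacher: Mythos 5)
Your proposal is correct and follows essentially the same route as the paper: reduce to $X(w_{\alpha})$ via the birational morphism onto $G/Q_{\alpha}$ (using $\phi_{*}\mathcal{O}_{X(w_{\alpha})}=\mathcal{O}_{G/Q_{\alpha}}$ and vanishing higher direct images), identify the pullback of $\mathcal{T}_{G/Q_{\alpha}}$ with the bundle associated to $\mathfrak{g}/\mathfrak{q}_{\alpha}$, kill higher cohomology by d\'evissage into the one-dimensional pieces indexed by $R^{+}(w_{\alpha})$, and get the global sections from Theorem 4.2. The only cosmetic differences are that you run the vanishing through the relative tangent sequence rather than the paper's direct filtration of $\mathfrak{g}/\mathfrak{q}_{\alpha}$, and that you spell out the character-to-$G$-module upgrade for part (2), which the paper leaves implicit in ``follows from theorem (4.2)''.
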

\begin{proof}

Let $w_{\alpha}$ denote the unique minimal representative of the longest 
element $w_{0}$ of $W$ with respect to the maximal parabolic subgroup
$Q_{\alpha}$.

Let $\phi$ denote the birational morphism from $X(w_{\alpha})$ onto 
$G/Q_{\alpha}$ given by the composition of the natural projection 
$p:G/B \longrightarrow G/Q_{\alpha}$ and the inclusion 
$X(w_{\alpha})\hookrightarrow G/B$.

Since the direct image $\phi_{*}(\mathcal{O}_{X(w_{\alpha})})$
of the structure sheaf $\mathcal{O}_{X(w_{\alpha})}$ of $X(w_{\alpha})$ is the 
structure sheaf $\mathcal{O}_{G/Q_{\alpha}}$ of $G/Q_{\alpha}$ and all higher 
direct images are zero,  we see that 
$\phi^{*}:H^{i}(G/Q_{\alpha}, \mathcal{T}_{G/Q_{\alpha}})\longrightarrow  
H^{i}(X(w_{\alpha}), \phi^{*}(\mathcal{T}_{G/Q_{\alpha}}))$ is an isomorphism 
for every $i\geq 0$. 

Let $\mathfrak{q}_{\alpha}$ denote the Lie algebra of $Q_{\alpha_{a}}$.
Then,  $p^{*}(\mathcal{T}_{G/Q_{\alpha}})$ is actually the
homogeneous vector bundle on $G/B$ associated to the $B$-module 
$\mathfrak{g}/\mathfrak{q}_{\alpha})$.

Let $Q^{+}_{\alpha}$ be the parabolic subgroup of $G$ opposite to $Q_{\alpha}$ 
containing $B^{+}$ and all $s_{\beta}$, $\beta$ running over all simple roots 
different from $\alpha$.

Since the Lie algebra of unipotent radical of $Q^{+}_{\alpha}$ is $\bigoplus_{\beta\in R^{+}(w_{\alpha})}\mathfrak{g}_{\beta}$, we can use arguments similar to proof of corollary(3.9) to obtain the following:

There is a total ordering $\{\beta_{1}, \beta_{2}, \cdots \beta_{m}\}$ of 
positive roots in $R^{+}(w_{\alpha})$ such that the $B$-module 
$\mathfrak{g}/\mathfrak{q}_{\alpha})$ has a filtration of 
sub modules $V_{0}:=\mathfrak{g}/\mathfrak{q}_{\alpha}\supset V_{1}\supset V_{2}
\supset \cdots V_{m-1}\supset V_{m}=(0)$ with each successive quotients $V_{i}/V_{i+1}$ is one dimensional and is isomorphic to $\mathfrak{g}_{\beta_{i}}$.

Hence, we have $H^{j}(\tau, V_{i}/V_{i+1})=H^{j}(\tau, \beta_{i})$.

Using corollary(3.4), we have $H^{j}(\tau, V_{i}/V_{i+1})=(0)$ for every 
$j\geq 1$ and for every $i=1,2, \cdots N-1$. 

This completes proof of (1).

Proof of (2) follows from theorem(4.2).

\end{proof}.

\section{Top cohomology  module  $H^{l(\tau)}(\tau, \tau^{-1}\cdot 0)$ }

Throughout this section, we assume that $G$ is a simple, simply connected 
and simply laced algebraic group over $\mathbb{C}$.

In this section, we show that for any $\tau\in W$
the top cohomolgy $H^{l(\tau)}(\tau, \tau^{-1}\cdot 0)$ is the 
one dimensional trivial representation of $B$. We also prove that 
for a given Coxeter element $c$ of $W$, $H^{i}(c, c^{-1}\cdot 0)$ is zero
for every $i\neq l(c)$ if and only if both 
$X(c)_{T}^{ss}(\mathcal{L}_{\alpha_{0}})$ and 
$X(c^{-1})_{T}^{ss}(\mathcal{L}_{\alpha_{0}})$ are non-empty.

We first obtain some application of theorem(4.2) in the following subsection.

\subsection{Yang-Zelevisky's proposition on Coxeter elements}

In this subsection, we obtain a corollary on Schubert varieties $X(c^{j})$ 
corresponding to some power $c^{j}$ of any given Coxeter element $c$ as an 
application of theorem(4.2). In the proof of this corollary, we use a 
Proposition about Coxeter elements by Yang and Zelevinsky. See [12, Proposition(1.3)].

We first recall that an element $c$ of $W$ is said to be a Coxeter element
if it has a reduced expression of the form 
$c=s_{\alpha_{i_{1}}}s_{\alpha_{i_{2}}}\cdots s_{\alpha_{i_{l}}}$, where $i_{j}\neq i_{k}$
when ever $j\neq k$ and $l$ is the rank of $G$.  

We now state the following proposition from [12].

\begin{proposition}[Yang-Zelevinsky]
Let $c$ be a Coxeter element. Let $\alpha$ be a simple root.
Then, there is a $j\in \mathbb{N}$ such that 
$c^{j}(\omega_{\alpha})=w_{0}(\omega_{\alpha})$. 
\end{proposition}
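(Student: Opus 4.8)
The plan is to analyze the orbit of the fundamental weight $\omega_{\alpha}$ under iteration of the Coxeter element $c$, using the fact that $c$ has finite order in $W$. Write a reduced expression $c = s_{\alpha_{i_1}} s_{\alpha_{i_2}} \cdots s_{\alpha_{i_l}}$ in which each simple reflection occurs exactly once, and set $\beta = \omega_{\alpha}$. Since $W$ is finite, $c$ has finite order, say $m$, so the orbit $\{\beta, c(\beta), c^2(\beta), \ldots, c^{m-1}(\beta)\}$ is finite and $c^m(\beta) = \beta$. The goal is to locate $w_0(\omega_{\alpha})$ inside this orbit.

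First I would recall that $\omega_{\alpha}$ is, up to the lattice generated by the other fundamental weights, the ``$\alpha$-direction'' in $X(T) \otimes \mathbb{R}$, and that $-w_0$ permutes the simple roots and hence the fundamental weights; so $w_0(\omega_{\alpha}) = -\omega_{\alpha^*}$ where $\alpha^* = -w_0(\alpha)$. The key point to exploit is that $w_0(\omega_{\alpha})$ is the unique antidominant weight in the $W$-orbit $W\cdot\omega_{\alpha}$, i.e.\ the unique element $\mu$ of the orbit with $\langle \mu, \gamma \rangle \le 0$ for all simple roots $\gamma$. So it suffices to show that some power $c^j$ carries $\omega_{\alpha}$ to an antidominant weight. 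Equivalently, denoting by $\mathrm{inv}(w) = \#\{\,\beta \in R^+ : w^{-1}(\beta) \in -R^+\,\} = l(w)$, one wants $l(c^j) = l(w_0)$ on the relevant ``$\alpha$-part'', but the cleanest route is the antidominance characterization: track how many positive roots become negative on the orbit.

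The main technical step, and the place I expect the real work to be, is a monotonicity/termination argument: I would show that as long as $c^j(\omega_{\alpha})$ is not antidominant there exists a simple root $\gamma$ with $\langle c^j(\omega_{\alpha}), \gamma\rangle > 0$, and then use the structure of the Coxeter word --- in particular that in passing from $c^j$ to $c^{j+1}$ one applies each simple reflection exactly once, in a fixed order --- to argue that the ``dominance defect'' of $c^j(\omega_{\alpha})$ strictly decreases, or that the set of positive roots sent to negative roots by $(c^j)^{-1}$ strictly grows, until it stabilizes. Here one uses that for a Coxeter element, repeatedly reflecting in all simple roots once each is enough to drive any weight to its antidominant representative; this is exactly the content behind [12, Proposition(1.3)], and I would either invoke the Bruhat-order/root-combinatorics lemma that $c^j \ge w_\alpha$ for $j$ large (as used later in the paper's Corollary following this proposition) or give the direct argument that the orbit, being finite, must contain the antidominant weight because the ``height'' functional $\langle \cdot, \rho^\vee\rangle$ cannot decrease forever. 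Once $c^j(\omega_{\alpha})$ is antidominant it must equal $w_0(\omega_{\alpha})$ by uniqueness, which completes the proof. Since the paper cites [12, Proposition(1.3)] for this statement, I would expect the author's proof to simply quote Yang--Zelevinsky rather than reprove it, but the self-contained argument above via finiteness of the orbit plus uniqueness of the antidominant representative is the natural fallback.
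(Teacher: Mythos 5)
The paper gives no argument for this proposition at all: it is quoted from [12, Proposition 1.3], so your primary suggestion --- simply cite Yang--Zelevinsky --- is exactly what the author does, and to that extent your proposal matches the paper.

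The self-contained fallback you sketch, however, has a genuine gap precisely where the real content of the statement lies. Your reduction is fine: $w_{0}(\omega_{\alpha})$ is the unique antidominant weight in the orbit $W\omega_{\alpha}$, so it suffices to show that some $c^{j}(\omega_{\alpha})$ is antidominant. But the cyclic orbit $\{c^{j}(\omega_{\alpha})\}$ has at most $h$ elements (the order of $c$), while $W\omega_{\alpha}$ is in general far larger, so there is no a priori reason this small orbit should meet the antidominant element; that it does is a special property of Coxeter elements (for a general element of $W$ the $\langle w\rangle$-orbit of $\omega_{\alpha}$ need not contain $w_{0}(\omega_{\alpha})$), so no soft finiteness or monotonicity argument can carry the step. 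Concretely: the appeal to ``the height functional cannot decrease forever'' is vacuous for a periodic orbit --- nothing decreases forever, and in fact $\sum_{j=0}^{h-1}c^{j}(\omega_{\alpha})=0$ since $1$ is not an eigenvalue of a Coxeter element, so the orbit goes up as well as down; the claim that the inversion sets of the $c^{j}$, or the dominance defect of $c^{j}(\omega_{\alpha})$, evolve monotonically ``until stabilization'' is unproved and false as stated; and invoking $c^{j}\geq w_{\alpha}$ for large $j$ is circular, because in the paper that inequality is Corollary 5.2, deduced from this very proposition, and indeed $c^{j}\geq w_{\alpha}$ is equivalent to $c^{j}(\omega_{\alpha})=w_{0}(\omega_{\alpha})$. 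Note also that when $w_{0}\neq -1$ (types $A_{n\geq 2}$, $D_{2k+1}$, $E_{6}$) no power of $c$ equals $w_{0}$, so the exponent $j$ genuinely depends on $\alpha$ and the result cannot follow from any statement about powers of $c$ alone; a correct proof must use the fine combinatorics of the Coxeter word, which is what Yang--Zelevinsky supply. So unless you simply cite [12], the key termination step still has to be proved, and the sketch as written does not do it.
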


We now use this proposition and theorem(4.2) to obtain the following corollary. 

Let $c$ be a Coxeter element. 

\begin{corollary}
Then, there is a  
$j\in \mathbb{N}$ such that $\sum_{\beta\in R^{+}(c^{j})}h^{0}(c^{j},\beta))
=Char(\mathfrak{g})$.
\end{corollary}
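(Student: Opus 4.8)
The plan is to deduce the corollary from theorem(4.2) applied to $\tau=c^{j}$, so the whole task reduces to producing a single simple root $\alpha$ and a single exponent $j$ with $c^{j}\geq w_{\alpha}$ in the Bruhat order. First I would fix an arbitrary simple root $\alpha$ and invoke the Yang--Zelevinsky proposition, Proposition(5.1), to obtain $j\in\mathbb{N}$ with $c^{j}(\omega_{\alpha})=w_{0}(\omega_{\alpha})$.

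The second step converts this equality of extremal weights into the desired Bruhat inequality. The isotropy group of $\omega_{\alpha}$ in $W$ is the standard parabolic subgroup $W_{Q_{\alpha}}=\langle s_{\beta}:\beta\in S,\ \beta\neq\alpha\rangle$, because $s_{\beta}(\omega_{\alpha})=\omega_{\alpha}$ for every simple root $\beta\neq\alpha$. Hence $c^{j}(\omega_{\alpha})=w_{0}(\omega_{\alpha})$ is equivalent to $w_{0}^{-1}c^{j}\in W_{Q_{\alpha}}$, i.e. $c^{j}$ lies in the coset $w_{0}W_{Q_{\alpha}}$. By definition $w_{\alpha}$ is the minimal-length element of that coset, so writing $c^{j}=w_{\alpha}u$ with $u\in W_{Q_{\alpha}}$ one has $l(c^{j})=l(w_{\alpha})+l(u)$; concatenating reduced words shows $w_{\alpha}\leq c^{j}$.

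With $c^{j}\geq w_{\alpha}$ in hand, theorem(4.2) applies verbatim with $\tau=c^{j}$ and yields $\sum_{\beta\in R^{+}(c^{j})}h^{0}(c^{j},\beta)=Char(\mathfrak{g})$, which is exactly the assertion. (One could instead argue directly: from $c^{j}(\omega_{\alpha})=w_{0}(\omega_{\alpha})$ and the fact that every simple root occurs with positive coefficient in $\alpha_{0}$ one gets $\langle\omega_{\alpha},(c^{j})^{-1}(\alpha_{0})\rangle\leq -1$, so $(c^{j})^{-1}(-\alpha_{0})\in R^{+}$, whence $X((c^{j})^{-1})_{T}^{ss}(\mathcal{L}_{\alpha_{0}})\neq\emptyset$ by $([10],\text{lemma}(2.1))$, and theorem(3.8) together with corollary(3.9) gives the character identity summed over all of $R^{+}$; but this version still needs the vanishing $H^{0}(c^{j},\beta)=(0)$ for $\beta\notin R^{+}(c^{j})$, which is precisely Step 2 of the proof of theorem(4.2), so the direct appeal to theorem(4.2) is the shortest route.)

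The essentially only obstacle is the second step, namely recognizing that the weight equality furnished by Yang--Zelevinsky forces the Bruhat relation $c^{j}\geq w_{\alpha}$ via the standard description of the minimal representative of the coset $w_{0}W_{Q_{\alpha}}$; everything else is a direct citation of results already established in the paper.
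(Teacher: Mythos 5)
Your proposal is correct and follows the paper's own route exactly: fix a simple root $\alpha$, invoke Proposition(5.1) to get $j$ with $c^{j}(\omega_{\alpha})=w_{0}(\omega_{\alpha})$, deduce $c^{j}\geq w_{\alpha}$, and apply theorem(4.2) with $\tau=c^{j}$. The only difference is that you spell out, via the stabilizer $W_{Q_{\alpha}}$ of $\omega_{\alpha}$ and the length-additive factorization through the minimal coset representative, the Bruhat inequality $c^{j}\geq w_{\alpha}$ that the paper asserts without comment -- a correct and welcome elaboration.
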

\begin{proof}
We first fix a simple root $\alpha$.
By propostion(5.1), there is a  $j\in \mathbb{N}$ such that 
$c^{j}(\omega_{\alpha})=w_{0}(\omega_{\alpha})$. 
For this choice of $j$, we have 
$c^{j}\geq w_{\alpha}$.
Proof of corollary follows from theorem(4.2)
by taking $\tau=c^{j}$. 

\end{proof}

We found some interesting facts about Coxeter elements in the study of torus 
quotients. For instance, see [[10], theorem(4.2)] and see [[11], theorem(3.3)].    
 
We now obtain the following corollary from [[10], theorem(4.2)]
in the context of the character of $\mathfrak{g}$.

Let $c$ be a Coxeter element.  
Let $h$ denote the order of the Coxeter element $c$.  Let $C$ denote the 
cyclic subgroup of $W$ generated by $c$. Let $C^{\prime}$ denote the 
complement subset of the singleton set $\{id\}$ in $C$. That is, let 
$C^{\prime}:=\{c^{j}: j=1, 2, \cdots h-1 \}$.

Then, we have 

\begin{corollary}
$\sum_{\tau\in C^{\prime}}\sum_{\beta\in R^{+}(\tau)}h^{0}(\tau,\beta)
=(h-1) Char(\mathfrak{g})$ if and only if both $X(c)_{T}^{ss}(\mathcal{L}_{\alpha_{0}})$ and $X(c^{-1})_{T}^{ss}(\mathcal{L}_{\alpha_{0}})$ are non-empty.
\end{corollary}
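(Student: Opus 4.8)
The plan is to reduce the ``if and only if'' statement to a counting identity over the cyclic group $C$, built out of Corollary(5.2) together with the characterization of non-emptiness of torus semi-stable loci via roots. First I would invoke [[10], theorem(4.2)]: for a Coxeter element $c$, both $X(c)_{T}^{ss}(\mathcal{L}_{\alpha_{0}})$ and $X(c^{-1})_{T}^{ss}(\mathcal{L}_{\alpha_{0}})$ are non-empty if and only if the cyclic group $C$ acts on the relevant set with the combinatorial property recorded there; equivalently, by ([10], lemma(2.1)), this happens exactly when, for \emph{every} $\tau \in C'$, one has $-\alpha_{0} \in \tau(R^{+})$, i.e. $\tau^{-1}(-\alpha_{0}) \in R^{+}$. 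The key observation is that since $c$ has order $h$, the set $\{-\alpha_0, c^{-1}(-\alpha_0), \dots, c^{-(h-1)}(-\alpha_0)\}$ together with the Coxeter-element combinatorics forces: $X(c)^{ss}$ and $X(c^{-1})^{ss}$ both nonempty $\iff$ $X(\tau^{-1})_T^{ss}(\mathcal{L}_{\alpha_0})$ is nonempty for all $\tau\in C'$.

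Next I would apply Theorem(3.8)(2) (or its character form, Corollary(3.9)) to each $\tau \in C'$ individually: for each such $\tau$, $\sum_{\beta\in R^{+}}h^{0}(\tau,\beta) = Char(\mathfrak{g})$ if and only if $X(\tau^{-1})_{T}^{ss}(\mathcal{L}_{\alpha_{0}})$ is non-empty, and in all cases $\sum_{\beta\in R^{+}}h^{0}(\tau,\beta) \leq Char(\mathfrak{g})$ coefficient-wise (because $H^{0}(\tau,\mathfrak{g}/\mathfrak{b})$ is always a $B$-submodule of $H^{0}(\tau,\mathfrak{g})=\mathfrak{g}$ via the evaluation map, by Lemma(3.3) and Lemma(2.5(1))). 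I would also need to replace the sum over $R^{+}$ by the sum over $R^{+}(\tau)$: this is exactly the content of Theorem(4.2) once we know $\tau \geq w_{\alpha}$ for some $\alpha$, and Corollary(5.2) supplies, for a suitable power $c^{j}$, that $c^{j}\geq w_{\alpha}$; more care is needed for a general $\tau = c^{k}\in C'$, where I would argue $H^{0}(\tau,\beta)=(0)$ for $\beta\notin R^{+}(\tau)$ by the same descending-induction argument (Step 2 of Theorem(4.2)'s proof, using Lemma(4.1)). Thus for every $\tau\in C'$,
$$\sum_{\beta\in R^{+}(\tau)}h^{0}(\tau,\beta) = \sum_{\beta\in R^{+}}h^{0}(\tau,\beta),$$
and this common quantity is $\leq Char(\mathfrak{g})$ with equality iff $X(\tau^{-1})^{ss}(\mathcal{L}_{\alpha_0})\neq\emptyset$.

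Summing over the $h-1$ elements $\tau\in C'$, the left side of the desired identity equals $\sum_{\tau\in C'}\sum_{\beta\in R^{+}}h^{0}(\tau,\beta)$, which is coefficient-wise at most $(h-1)Char(\mathfrak{g})$. Equality of the sums holds if and only if equality holds for each summand, i.e. if and only if $X(\tau^{-1})_T^{ss}(\mathcal{L}_{\alpha_0})\neq\emptyset$ for all $\tau\in C'$, which by the first paragraph is equivalent to $X(c)_{T}^{ss}(\mathcal{L}_{\alpha_{0}})$ and $X(c^{-1})_{T}^{ss}(\mathcal{L}_{\alpha_{0}})$ both being non-empty. This gives both directions at once.

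\textbf{Main obstacle.} The delicate point is the equivalence, via [[10], theorem(4.2)], between ``both $X(c)^{ss}$ and $X(c^{-1})^{ss}$ nonempty'' and ``$X(\tau^{-1})^{ss}$ nonempty for \emph{all} $\tau\in C'$'' — i.e. understanding the orbit of $-\alpha_0$ under the cyclic group generated by $c$ and why controlling the two extreme cases $\tau=c$ and $\tau=c^{-1}$ controls all intermediate powers. I expect this to rely essentially on the cited theorem(4.2) of [10] about torus quotients for Coxeter elements, and I would need to check that its hypotheses match the present setup; the remaining steps (the submodule inequality, the passage from $R^+$ to $R^+(\tau)$, and the final summation) are routine given the results already established in Sections 3 and 4.
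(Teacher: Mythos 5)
Your necessary direction is essentially the paper's argument and is fine: each $\sum_{\beta\in R^{+}(\tau)}h^{0}(\tau,\beta)$ is coefficient-wise at most $\sum_{\beta\in R^{+}}h^{0}(\tau,\beta)=Char(H^{0}(\tau,\mathfrak{g}/\mathfrak{b}))\leq Char(\mathfrak{g})$, so the total can reach $(h-1)Char(\mathfrak{g})$ only if each summand, in particular those for $\tau=c$ and $\tau=c^{-1}$, equals $Char(\mathfrak{g})$, and then Theorem(3.8)/Corollary(3.9) give the non-emptiness. (One small slip: $H^{0}(\tau,\mathfrak{g}/\mathfrak{b})$ is a \emph{quotient} of $H^{0}(\tau,\mathfrak{g})=\mathfrak{g}$ by Lemma(3.6(2)), not a submodule via Lemma(3.3), which only applies to $B$-submodules of $\mathfrak{g}$ containing $\mathfrak{b}$; the inequality you need still follows, just from the quotient statement.)

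The sufficient direction, however, has a genuine gap, and it sits exactly at the point you flag as the "main obstacle" plus one more place. First, your paraphrase of [10, Theorem(4.2)] is not what that theorem says: its content is a rigidity statement, namely that $X(c)_{T}^{ss}(\mathcal{L}_{\alpha_{0}})$ and $X(c^{-1})_{T}^{ss}(\mathcal{L}_{\alpha_{0}})$ are both non-empty only when $G$ is of type $A_{n}$ and $c$ or $c^{-1}$ equals $s_{\alpha_{n}}s_{\alpha_{n-1}}\cdots s_{\alpha_{1}}$. The implication "both non-empty $\Rightarrow$ $X(\tau^{-1})_{T}^{ss}(\mathcal{L}_{\alpha_{0}})\neq\emptyset$ for all $\tau\in C^{\prime}$" is true, but its proof in the paper goes \emph{through} this classification together with the explicit computation $c^{r}=w_{\alpha_{r}}$ for $r=1,\dots,n$ (so that $w_{\alpha_r}^{-1}(\alpha_0)<0$ as in Step 1 of Theorem(4.2)); your suggested mechanism via "the orbit of $-\alpha_{0}$ under $c$" is not an argument. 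Second, even granting that equivalence, your sufficient direction still needs $\sum_{\beta\in R^{+}(\tau)}h^{0}(\tau,\beta)=\sum_{\beta\in R^{+}}h^{0}(\tau,\beta)$ for every $\tau\in C^{\prime}$, i.e. $H^{0}(\tau,\beta)=(0)$ for $\beta\notin R^{+}(\tau)$. You assert this "by the same descending-induction argument (Step 2 of Theorem(4.2))", but that induction uses the hypothesis $\tau\geq w_{\alpha}$ at every stage (it guarantees $\beta\not\geq\alpha$, hence $\beta\neq\alpha_{0}$ and the existence of $\gamma$ with $\langle\beta,\gamma\rangle=-1$, and the hypothesis is preserved when passing to $\tau s_{\gamma}$); for an arbitrary power of an arbitrary Coxeter element no such $\alpha$ is available, so the argument does not transfer as stated. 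The paper avoids both issues at once: after invoking the rigidity of [10, Theorem(4.2)] it identifies $C^{\prime}=\{w_{\alpha_{r}}:r=1,\dots,n\}$ and then simply applies Theorem(4.2) to each $w_{\alpha_{r}}$, which already contains the vanishing off $R^{+}(\tau)$ and the equality with $Char(\mathfrak{g})$. So your outline can be repaired, but only by using the classification in full, which is precisely the paper's route.
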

\begin{proof}

{\it Proof of Necessary condition:}

We first prove that if $\sum_{\tau\in C^{\prime}}\sum_{\beta\in R^{+}(\tau)}h^{0}(\tau,\beta)=(h-1) Char(\mathfrak{g})$, then, both $X(c)_{T}^{ss}(\mathcal{L}_{\alpha_{0}})$ and $X(c^{-1})_{T}^{ss}(\mathcal{L}_{\alpha_{0}})$ are non-empty.

By lemma(3.6), $H^{0}(\tau , \mathfrak{g}/\mathfrak{b})$ is a quotient of 
$\mathfrak{g}$. So, the caharcater of $H^{0}(\tau , \mathfrak{g}/\mathfrak{b})$
must be less than or equal to the character of $\mathfrak{g}$. Further, 
by theorem(3.8), if $\sum_{\beta\in R^{+}(\tau)}h^{0}(\tau,\beta)= Char(\mathfrak{g})$ then, $X(\tau^{-1})_{T}^{ss}(\mathcal{L}_{\alpha_{0}})$ is non-empty. 

Hence, if $\sum_{\tau\in C^{\prime}}\sum_{\beta\in R^{+}(\tau)}h^{0}(\tau,\beta)=(h-1) Char(\mathfrak{g})$, then, we must have 
$\sum_{\beta\in R^{+}(c^{-1})}h^{0}(c^{-1},\beta)=Char(\mathfrak{g})$ and 
$\sum_{\beta\in R^{+}(c)}h^{0}(c,\beta)=Char(\mathfrak{g})$. 

Hence, by using above arguments, we conclude that 
both $X(c)_{T}^{ss}(\mathcal{L}_{\alpha_{0}})$ and 
$X(c^{-1})_{T}^{ss}(\mathcal{L}_{\alpha_{0}})$ are non-empty.

This proves {\it Necessary condition}.
. 
{\it Proof of Sufficient condition:}

We now prove that if both $X(c)_{T}^{ss}(\mathcal{L}_{\alpha_{0}})$ and $X(c^{-1})_{T}^{ss}(\mathcal{L}_{\alpha_{0}})$ are non-empty, then, $\sum_{\tau\in C^{\prime}}\sum_{\beta\in R^{+}(\tau)}h^{0}(\tau,\beta)
=(h-1) Char(\mathfrak{g})$.

Now, if both $X(c)_{T}^{ss}(\mathcal{L}_{\alpha_{0}})$ and $X(c^{-1})_{T}^{ss}(\mathcal{L}_{\alpha_{0}})$ are non-empty, then by [10, theorem (4.2)], we have 
$G$ must be of type $A_{n}$ and either $c=s_{\alpha_{n}}s_{\alpha_{n-1}}\cdots s_{\alpha_{1}}$ or $c^{-1}=s_{\alpha_{n}}s_{\alpha_{n-1}}\cdots s_{\alpha_{1}}$.

With out loss of generality, we may assume that 
$c=s_{\alpha_{n}}s_{\alpha_{n-1}}\cdots s_{\alpha_{1}}$. Now, a simple calculation shows that $c^{r}=w_{\alpha_{r}}$ for every $r=1, 2, \cdots n$.

Hence, we have $C^{\prime}=\{w_{\alpha_{r}}: r=1, 2, \cdots n\}$.

Now, proof of {\it Sufficient condition} follows by using theorem(4.2). 

This completes the proof of corollary.

\end{proof}

Let $c$ be a Coxeter element of $W$. We choose an ordering  
$\{\alpha_{1}, \alpha_{2} , \cdots \alpha_{n} \}$ of simple 
roots such that $c=s_{\alpha_{n}}s_{\alpha_{n-1}}\cdots s_{\alpha_{2}}
s_{\alpha_{1}}$ is a reduced expression for $c$.

By proposition(5.1), for every $j\in \{1, 2, \cdots n\}$, there is a positive 
integer $m_{j}$ such that $c^{m_{j}}(\omega_{\alpha_{j}})=
w_{0}(\omega_{\alpha_{j}})$. For this choice of $m_{j}$, we have 
$$1=\langle \omega_{\alpha_{j}}, \alpha_{j} \rangle=
\langle  c^{m_{j}}(\omega_{\alpha_{j}}), c^{m_{j}}(\alpha_{j}) \rangle 
 =\langle w_{0}(\omega_{\alpha_{j}}), c^{m_{j}}(\alpha_{j}) \rangle.$$ 
 
Hence, we see that $c^{m_{j}}(\alpha_{j})$ is a negative root.

To prove theorem C, we now proceed as follows:

Let $J^{\prime}$ denote the set of all integers  $j$ in 
$\{1, 2, \cdots n \}$ for which there is a positive integer $a_{j}$ 
such that $c^{i}(\alpha_{j})$ is a simple root for every 
$i=0, 1, 2, \cdots a_{j}-1$ and 
$c^{a_{j}}(\alpha_{j})$ is a negative root. 
Let $J$ denote the set of all elements $j$ in $J^{\prime}$
such that $c^{-1}(\alpha_{j})$ is not a simple root.

The following three lemmas describe some properties of the set $J$ 
which will be used in the proof of theorem(5.7).

\begin{lemma}
Let $i$ and $j$ be two distinct elements of $\{1, 2, \cdots n\}$.
Then, $c(\alpha_{i})=\alpha_{j}$ if and only if the following holds:

\begin{enumerate}
\item
$j$ is the unique element in $\{1,2, \cdots i-1\}$ such that 
$\langle \alpha_{j} , \alpha_{i} \rangle \neq 0$. \\  
\item
$i$ is the unique element in $\{j+1,j+2, \cdots n\}$ such that 
$\langle \alpha_{j} , \alpha_{i} \rangle \neq 0$. \\  
\end{enumerate}
\end{lemma}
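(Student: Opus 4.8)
The plan is to analyze the reduced expression $c = s_{\alpha_n}s_{\alpha_{n-1}}\cdots s_{\alpha_1}$ and track the action of $c$ on a simple root $\alpha_i$ by applying the simple reflections one at a time, from the rightmost. Since $c$ is a Coxeter element, each simple reflection occurs exactly once, so the key observation is the standard fact that $s_{\alpha_{i-1}}\cdots s_{\alpha_1}(\alpha_i) = \alpha_i$ whenever $\langle\alpha_j,\alpha_i\rangle = 0$ for all $j < i$, while in general $s_{\alpha_{i-1}}\cdots s_{\alpha_1}$ can only modify $\alpha_i$ by adding nonnegative combinations of $\alpha_1,\dots,\alpha_{i-1}$. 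Then the remaining reflections $s_{\alpha_n}\cdots s_{\alpha_i}$ act: $s_{\alpha_i}$ sends $\alpha_i$ to $-\alpha_i$, and subsequently the reflections with larger index can restore positivity. The cleanest route is to split $c = c_{>i}\cdot s_{\alpha_i}\cdot c_{<i}$ where $c_{<i} = s_{\alpha_{i-1}}\cdots s_{\alpha_1}$ and $c_{>i} = s_{\alpha_n}\cdots s_{\alpha_{i+1}}$, and compute $c(\alpha_i) = c_{>i}s_{\alpha_i}c_{<i}(\alpha_i)$.

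First I would establish the forward direction. Suppose $c(\alpha_i) = \alpha_j$ with $j \neq i$. Since simple reflections $s_{\alpha_k}$ with $k\neq i$ fix the positive root system minus $\{\alpha_k\}$ and only $s_{\alpha_i}$ ever produces a negative root from $\alpha_i$, I would argue that $c_{<i}(\alpha_i)$ must be a positive root supported on $\{\alpha_i\} \cup \{\alpha_k : k < i\}$ with $\alpha_i$-coefficient $1$; applying $s_{\alpha_i}$ and then $c_{>i}$ and demanding the result be the \emph{simple} root $\alpha_j$ forces strong constraints. Concretely, for $c(\alpha_i)$ to be simple, no "mixing" can survive: I expect to show $c_{<i}(\alpha_i) = \alpha_i + \alpha_j$ is the only possibility, which happens precisely when $\alpha_j$ is the unique simple root among $\alpha_1,\dots,\alpha_{i-1}$ non-orthogonal to $\alpha_i$ (here simply-lacedness via lemma(2.6) gives $\langle\alpha_j,\alpha_i\rangle = -1$, so $s_{\alpha_i}$ applied to $\alpha_i$ then the chain gives $s_{\alpha_j}(\alpha_i+\alpha_j)\cdots = \alpha_i$, wait — I would carefully recompute: $s_{\alpha_i}(\alpha_i+\alpha_j) = \alpha_j$, and then $c_{>i}(\alpha_j) = \alpha_j$ requires $\alpha_i$ to be the unique index in $\{j+1,\dots,n\}$ non-orthogonal to $\alpha_j$). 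Thus conditions (1) and (2) emerge. For the converse, I would assume (1) and (2) and simply run the same computation forward: orthogonality of $\alpha_i$ to all $\alpha_k$ with $k<i$, $k\neq j$, gives $c_{<i}(\alpha_i) = s_{\alpha_j}(\alpha_i) = \alpha_i + \alpha_j$; then $s_{\alpha_i}(\alpha_i+\alpha_j) = \alpha_j$; then since $\alpha_j$ is orthogonal to all $\alpha_k$ with $k > j$, $k\neq i$, and $s_{\alpha_i}(\alpha_j)=\alpha_j$ would need care but actually $c_{>i} = s_{\alpha_n}\cdots s_{\alpha_{i+1}}$ doesn't include $s_{\alpha_i}$, so $c_{>i}(\alpha_j) = \alpha_j$, yielding $c(\alpha_i) = \alpha_j$.

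The main obstacle I anticipate is the bookkeeping in the forward direction: ruling out the possibility that $c_{<i}(\alpha_i)$ is a positive root with support strictly larger than $\{\alpha_i,\alpha_j\}$, or with several simple roots non-orthogonal to $\alpha_i$ appearing, and then showing that after applying $s_{\alpha_i}$ and $c_{>i}$ one cannot accidentally collapse back to a simple root. The control here comes from the fact that each reflection is used once and from tracking coefficients: once a simple root $\alpha_k$ ($k<i$, $k\neq j$) enters the support of $c_{<i}(\alpha_i)$ with positive coefficient, the later reflections $s_{\alpha_n},\dots,s_{\alpha_{i+1}}$ (none of which is $s_{\alpha_k}$) cannot remove it, so the image cannot be simple. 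Making this precise — essentially an induction on the number of reflections applied, maintaining the invariant that the partial image is a positive root with $\alpha_i$-coefficient $1$ until $s_{\alpha_i}$ is reached — is the technical heart. I would also invoke lemma(2.6) throughout to keep all pairings in $\{-1,0,1\}$, which is what makes the "$\alpha_i+\alpha_j$" computation clean and rules out higher multiplicities.
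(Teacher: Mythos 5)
Your proposal is correct and follows essentially the same route as the paper, whose proof of this lemma is only the one-line remark that it follows from the chosen reduced expression $c=s_{\alpha_{n}}\cdots s_{\alpha_{1}}$: your decomposition $c=c_{>i}\,s_{\alpha_{i}}\,c_{<i}$, together with the observation that each $s_{\alpha_{k}}$ occurs exactly once and only alters the $\alpha_{k}$-coefficient (so a coefficient once created persists to the end), with lemma(2.6) keeping all pairings in $\{-1,0,1\}$, is precisely the fleshed-out version of that computation. One minor bookkeeping correction: the identity $c_{<i}(\alpha_{i})=\alpha_{i}+\alpha_{j}$ (in both directions) also uses the orthogonality of $\alpha_{j}$ to the $\alpha_{k}$ with $j<k<i$ (i.e., the part of condition (2) with $k<i$), not only the orthogonality of $\alpha_{i}$ to those $\alpha_{k}$, while the step $c_{>i}(\alpha_{j})=\alpha_{j}$ accounts only for the indices $k>i$.
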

\begin{proof}
Proof follows by our chosen ordering
$\{\alpha_{1}, \alpha_{2} , \cdots \alpha_{n} \}$ of simple 
roots such that $c=s_{\alpha_{n}}s_{\alpha_{n-1}}\cdots s_{\alpha_{2}}
s_{\alpha_{1}}$ is a reduced expression for $c$.
\end{proof}

We also have

\begin{lemma}
Let $i$ and $j$ be two distinct elements of $J$.
Then, we have 
$\langle  c^{p}(\alpha_{j}), c^{q}(\alpha_{k}) \rangle =0$ for any two 
distinct elements $j$ and $k$ of $J$ and for every 
$p=0, 1, 2, \cdots a_{j}-1$ and for every  $q=0, 1, 2,  \cdots a_{k}-1$.
\end{lemma}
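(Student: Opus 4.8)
The plan is to show that the roots $c^{p}(\alpha_{j})$ (for $0\le p\le a_{j}-1$) and $c^{q}(\alpha_{k})$ (for $0\le q\le a_{k}-1$) associated to two distinct indices $j,k\in J$ are pairwise orthogonal. First I would invoke the $W$-invariance of the form $\langle\,,\,\rangle$: since $c$ is an isometry, $\langle c^{p}(\alpha_{j}),c^{q}(\alpha_{k})\rangle=\langle \alpha_{j},c^{q-p}(\alpha_{k})\rangle$, so (after relabelling) it suffices to prove $\langle \alpha_{j},c^{r}(\alpha_{k})\rangle=0$ whenever $0\le r\le a_{k}-1$ and $c^{i}(\alpha_{k})$ is a simple root for $i=0,\dots,a_k-1$. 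Next I would use the defining property of $J$ together with Lemma 5.4: for $j\in J$, $c^{-1}(\alpha_{j})$ is \emph{not} simple, which by (the contrapositive of) Lemma 5.4 means there is \emph{no} simple root $\alpha_{i}$ with $c(\alpha_{i})=\alpha_{j}$; dually, for each $i$ with $0\le i\le a_k-1$, $c$ sends the simple root $c^{i}(\alpha_{k})$ to the (simple, when $i<a_k-1$) root $c^{i+1}(\alpha_{k})$, so Lemma 5.4 pins down exactly which simple roots are non-orthogonal to $c^{i}(\alpha_k)$ within the relevant index ranges.

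The key combinatorial step is then to track, as $r$ increases from $0$, which simple roots can fail to be orthogonal to $c^{r}(\alpha_{k})$. Writing $c^{r}(\alpha_{k})=\alpha_{k_{r}}$ (a simple root, by hypothesis on $r<a_k$), Lemma 5.4 applied to the pair $(\alpha_{k_{r-1}},\alpha_{k_{r}})$ shows $k_{r-1}$ is the unique element of $\{1,\dots,k_{r}-1\}$ non-orthogonal to $\alpha_{k_{r}}$, and $k_{r}$ is the unique element of $\{k_{r-1}+1,\dots,n\}$ non-orthogonal to $\alpha_{k_{r-1}}$. Combining these constraints down the chain $k=k_{0},k_{1},\dots,k_{r}$ forces any simple root $\alpha_{m}$ non-orthogonal to $\alpha_{k_r}$ to be one of the $\alpha_{k_{r-1}}$, or to have index $m>k_r$; in particular $\alpha_j$ could only be non-orthogonal to some $c^{r}(\alpha_k)$ if $\alpha_j$ occurs in the chain $k_{0},\dots,k_{a_k-1}$ (contradicting $j\ne k$ and the disjointness of the $c$-orbits of distinct elements of $J$ up to their escape time) or if the index relations force $j$ to be the predecessor $k_{r-1}$ — but $\alpha_{k_{r-1}}=c^{r-1}(\alpha_k)$ is then simple and in the $k$-chain, again excluded. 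The needed disjointness of the two chains is where the hypothesis $j,k\in J$ (i.e.\ $c^{-1}(\alpha_j),c^{-1}(\alpha_k)$ are not simple) is essential: it guarantees the chains for $j$ and $k$ do not merge backwards, since a merge would produce a common simple predecessor under $c$, contradicting Lemma 5.4's uniqueness clauses.

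The main obstacle I anticipate is the bookkeeping in this last step: making precise that the two ascending chains of simple roots $\{c^{i}(\alpha_j):0\le i<a_j\}$ and $\{c^{i}(\alpha_k):0\le i<a_k\}$ are disjoint and that no simple root in one chain is non-orthogonal to a simple root in the other. I would handle this by a careful induction on $\min(a_j,a_k)$ (or on $p+q$ after the reduction above), at each stage using Lemma 5.4 to convert a non-orthogonality $\langle c^{p}(\alpha_j),c^{q}(\alpha_k)\rangle\ne 0$ into an equality $c^{p+1}(\alpha_j)=c^{q}(\alpha_k)$ or $c^{q+1}(\alpha_k)=c^{p}(\alpha_j)$ (these being the only ways Lemma 5.4 permits non-orthogonality between consecutive images to persist), and then deriving a contradiction with $j\ne k$ and the definition of $J$ by walking the equality back with $c^{-1}$ until one side becomes a non-simple root while the other is forced simple. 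Once the reduction via $W$-invariance and the inductive Lemma 5.4 argument are set up, the remaining computations are routine.
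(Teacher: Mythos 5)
Your plan follows essentially the same route as the paper's proof: reduce by $W$-invariance to a pairing $\langle \alpha_{j}, c^{q-p}(\alpha_{k})\rangle$, use Lemma 5.4 to convert a non-zero pairing between a simple root of one chain and a simple root of the other into the statement that one is the $c$-image of the other, and then contradict the definition of $J$ (your ``walking back with $c^{-1}$'' is exactly the paper's Observation that $c^{m}(\alpha_{k})\neq\alpha_{j}$ for $m\leq a_{k}-1$, which rests on $c^{-1}(\alpha_{j})$ not being simple). One step, however, needs more than Lemma 5.4 as you invoke it: your dichotomy ``non-orthogonality forces $c^{p+1}(\alpha_{j})=c^{q}(\alpha_{k})$ or $c^{q+1}(\alpha_{k})=c^{p}(\alpha_{j})$'' comes from applying Lemma 5.4 to the chain element with the larger index, and this is only legitimate when its $c$-image is again a simple root; if that element is the last one of its chain (e.g. $q-p=a_{k}-1$, so $c^{q-p+1}(\alpha_{k})=c^{a_{k}}(\alpha_{k})$ is a negative root), Lemma 5.4 gives nothing, and in the configuration where the other chain element has the smaller index its own image under $c$ is constrained by Lemma 5.4 only in terms of indices smaller than it, so no equality is forced. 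The missing ingredient is the easy supplementary fact (same spirit as Lemma 5.4, read off from the fixed reduced word $c=s_{\alpha_{n}}\cdots s_{\alpha_{1}}$) that $c$ sends a simple root $\alpha_{t}$ to a negative root only when $\alpha_{t}$ is orthogonal to every $\alpha_{i}$ with $i<t$; with it, non-orthogonality with a smaller-indexed simple root already excludes the end-of-chain case, which is precisely how the paper argues (``$c(\alpha_{t})$ must be positive''). Finally, a small bookkeeping slip: since $c(\alpha_{k_{r-1}})=\alpha_{k_{r}}$, Lemma 5.4 forces $k_{r}<k_{r-1}$ (indices decrease along a chain), so your two uniqueness clauses have the roles of $k_{r-1}$ and $k_{r}$ interchanged; this is harmless for the strategy but should be straightened out in the write-up.
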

\begin{proof}

By the definition of $J$, we can see that 

{\it Observation 1:}
 
$c^{m}(\alpha_{j})\neq \alpha_{k}$ 
for any $m=0, 1, \cdots a_{j}-1$ and 
$c^{m}(\alpha_{k})\neq \alpha_{j}$ 
for any $m=0, 1, \cdots a_{k}-1$. 

For instance, we can prove {\it Observation } as follows:

Since $k\in J$, we see that $c^{t}(\alpha_{k})$
is a simple root for every $t=0, 1, \leq m$. On the other hand,
since $j\in J$, $c^{-1}(\alpha_{j})$ is not simple. Thus, we have 
$p=q$. This is a conradiction as $j\neq k$.

We now proceed to prove the lemma.

With out loss of generality, we may asuume that $p\leq q$.

Hence, we have 
$$\langle  c^{p}(\alpha_{j}), c^{q}(\alpha_{k})\rangle =\langle \alpha_{j}, 
c^{q-p}(\alpha_{k})\rangle.$$ 

If $\langle \alpha_{j} , c^{q-p}(\alpha_{k})\rangle$ is non-zero, either  
we have $\alpha_{j}=c^{q-p}(\alpha_{k})$ or 
$\langle \alpha_{j}, c^{q-p}(\alpha_{k}) \rangle=-1$.

$\alpha_{j}=c^{q-p}(\alpha_{k})$ is not possible by {\it Observation 1}.

So, we may assume that 
$\langle \alpha_{j}, c^{q-p}(\alpha_{k}) \rangle=-1$.
Let $c^{q-p}(\alpha_{k})=\alpha_{t}$ 
for some $t\in \{1, 2, \cdots n\}$.

With out loss of generality, we may assume that $j < t$.
Since $\langle \alpha_{j} , \alpha_{t}\rangle = \langle \alpha_{j} , c^{q-p}(\alpha_{k})\rangle$ is non-zero,  $c(\alpha_{t})$ must be positive. 
Since $q-p\leq a_{k}-1$, $c(\alpha_{t})$ must be a simple root. Since 
$\langle \alpha_{j} , \alpha_{t}\rangle $ is non-zero, we have 
$c(\alpha_{t})=\alpha_{j}$. Hence, we have 
$c^{q+1-p}(\alpha_{k})=\alpha_{j}$. This is not possible by {\it Observation 1}.

This completes proof of the lemma.

\end{proof}

For any $j\in J$,  we take $\phi_{j}=s_{\alpha_{j}} s_{c(\alpha_{j})}\cdots 
s_{c^{a_{j}-1}(\alpha_{j})}$. 

Then, we have 

\begin{lemma}
\begin{enumerate}
\item
$\phi_{j}$ commutes with  $\phi_{k}$ for any $j$ and $k$ in $J$.\\  
\item
Let $\phi$ denote the product $\Pi_{j\in J}\phi_{j}$. Then, 
we can write $c=\tau\phi$ such that 
$l(c)=l(\phi)+l(\tau)$. \\
\item
Let $r\in \{1, 2, \cdots n\}$ be such that $s_{\alpha_{r}}\leq \tau$.
Then, we have 
$height(c(\alpha_{r}))\geq height(\phi(\alpha_{r}))$.\\
\end{enumerate}
\end{lemma}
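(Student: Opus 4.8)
The three statements all follow by unwinding the definitions of $\phi_j$, $J$, and the fixed reduced word $c = s_{\alpha_n}s_{\alpha_{n-1}}\cdots s_{\alpha_1}$, together with Lemma~5.5 and Lemma~5.6. For part (1), I would argue that if $j,k\in J$ are distinct, then by Lemma~5.6 every pair of roots $c^p(\alpha_j)$ (for $0\le p\le a_j-1$) and $c^q(\alpha_k)$ (for $0\le q\le a_k-1$) is orthogonal, so the simple reflections $s_{c^p(\alpha_j)}$ and $s_{c^q(\alpha_k)}$ all commute with one another. Since $\phi_j$ is a product of the former and $\phi_k$ a product of the latter, $\phi_j$ and $\phi_k$ commute. (One should also note that the reflections occurring in a single $\phi_j$ are in distinct simple roots — this is already built into the definition of $J$ via the requirement that $c^i(\alpha_j)$ be simple for $i<a_j$ — so each $\phi_j$ is itself a genuine Coxeter-type product and $\phi = \prod_{j\in J}\phi_j$ is unambiguous; by part (1) the order of the product does not matter.)

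For part (2), the plan is to exhibit the factorization $c = \tau\phi$ combinatorially and then verify the length is additive. The key observation is that for $j\in J$, the factor $s_{\alpha_j}$ appearing leftmost in $\phi_j$ and the chain $\alpha_j \mapsto c(\alpha_j)\mapsto \cdots \mapsto c^{a_j-1}(\alpha_j)$ all ``march to the right'' through the word $c$ in the sense of Lemma~5.5: each step $c(\alpha_t)=\alpha_{t'}$ with $t' < t$ is forced by the shape of the reduced word. I would argue that the sub-word of $c$ consisting of the letters $s_{\alpha_j}$ for $j$ such that $\alpha_j$ occurs in one of the chains indexed by $J$ can be pulled out, after the commutations in part (1), as a consecutive block equal to $\phi$; what remains is a reduced word for some $\tau$ with $c=\tau\phi$ and $l(c)=l(\tau)+l(\phi)$. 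The precise bookkeeping is that $\phi$ is obtained from a reduced expression of $c$ by deleting a set of letters that is closed under the relevant commutations, so the remaining letters form a reduced expression for $\tau$; this uses the exchange/deletion condition together with Lemma~5.5 to see that no further cancellation occurs. \emph{This is the step I expect to be the main obstacle}, since it requires care in identifying exactly which letters of the reduced word for $c$ belong to $\phi$ and in checking that the commutations in (1) genuinely expose them as a contiguous block without shortening.

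For part (3), suppose $s_{\alpha_r}\le \tau$, so the letter $s_{\alpha_r}$ survives in the reduced word for $\tau$; in particular $r\notin J$ (or more precisely $\alpha_r$ does not occur in any chain pulled into $\phi$). I would compare $c(\alpha_r) = s_{\alpha_n}\cdots s_{\alpha_1}(\alpha_r)$ with $\phi(\alpha_r)$: since $\tau$ is obtained from $c$ by deleting the letters of $\phi$ and those deleted letters act (by Lemma~5.6 and the structure of $J$) on a set of roots ``disjoint'' from the part of the word relevant to $\alpha_r$, the action of $c$ on $\alpha_r$ differs from the action of $\phi$ on $\alpha_r$ only by the extra positive contributions coming from the letters of $\tau$. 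Concretely, writing $c=\tau\phi$ I would track the height: $\mathrm{height}(\phi(\alpha_r))$ is the height of a root that $\tau$ then acts on, and because $s_{\alpha_r}\le\tau$ one shows the subsequent letters of $\tau$ can only add simple roots $\ge 0$ to the expression (no cancellation of $\alpha_r$ occurs before it is ``used''), giving $\mathrm{height}(c(\alpha_r)) = \mathrm{height}(\tau(\phi(\alpha_r))) \ge \mathrm{height}(\phi(\alpha_r))$. The inequality is then a direct consequence of the fact that applying a simple reflection $s_{\alpha_i}$ to a root $\mu$ changes its height by $-\langle\mu,\alpha_i\rangle$, and one checks using Lemma~5.5 and Lemma~5.6 that along the relevant portion of $\tau$ these contributions are non-negative.
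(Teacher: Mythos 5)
Your part (1) is fine and is exactly the paper's argument: the orthogonality of the two chains (the paper's lemma(5.5); your ``Lemma 5.6'') makes all the simple reflections occurring in $\phi_{j}$ commute with those in $\phi_{k}$. The problems are in (2) and (3). For (2), the mechanism you propose --- extracting the letters of $\phi$ from the reduced word $c=s_{\alpha_{n}}\cdots s_{\alpha_{1}}$ as a contiguous right-hand block ``after the commutations in part (1)'' --- does not work as stated: part (1) only allows you to permute the blocks $\phi_{j}$ among themselves, and says nothing about moving a letter of $\phi$ past the intervening letters that will belong to $\tau$; those letters need not commute with the chain letters, and you yourself flag this as the unresolved main obstacle. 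The paper avoids word surgery altogether: it observes that $R^{+}(\phi)\subseteq R^{+}(c)$ (the inversions of $\phi_{j}$ are the roots $\sum_{i=r}^{a_{j}-1}c^{i}(\alpha_{j})$, each an inversion of $c$), and then the standard weak-order fact that $R^{+}(\phi)\subseteq R^{+}(c)$ is equivalent to $c=\tau\phi$ with $l(c)=l(\tau)+l(\phi)$ (take $\tau=c\phi^{-1}$) finishes (2) in one line. If you insist on the subword route you would have to invoke the adjacency/uniqueness statement of the paper's lemma(5.4) to show that every letter lying between two consecutive chain letters is orthogonal to them, which is a different and heavier argument than the one you sketched.

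For (3) there is a genuine gap of the same kind: the sentence ``the subsequent letters of $\tau$ can only add simple roots \dots\ giving $\mathrm{height}(c(\alpha_{r}))=\mathrm{height}(\tau(\phi(\alpha_{r})))\geq \mathrm{height}(\phi(\alpha_{r}))$'' is precisely the statement to be proved, and you give no reason why the increment $-\langle \mu,\alpha_{i}\rangle$ is non-negative at each relevant step; in general applying an element of positive length to a positive root can very well decrease its height, so ``one checks that the contributions are non-negative'' is not an argument. The paper proves (3) by a case analysis anchored in lemma(5.4): if $\phi(\alpha_{r})=\alpha_{r}$, or if $c(\alpha_{r})$ is simple, the two heights coincide; otherwise the uniqueness of the index adjacent to $\alpha_{r}$ on either side of $r$ in the chosen ordering forces either $c(\alpha_{r})\geq \phi(\alpha_{r})$ or $c(\alpha_{r})+\alpha_{r}\geq \phi(\alpha_{r})+\alpha_{k}$ for a suitable $k$ with $s_{\alpha_{k}}\nleq\phi$, and the height inequality follows by comparing these explicit roots. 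Some comparison of that concrete kind is unavoidable; your proposal needs it to be supplied before (3) can be considered proved.
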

\begin{proof}

{\it Proof of (1):}

Proof of (1) follows from lemma(5.5).

{\it Proof of (2):}

Proof of (2) follows from the fact that $R^{+}(\phi)\subset R^{+}(c)$.

{\it Proof of (3):}

Since $s_{\alpha_{r}}\leq \tau$, we have 
$s_{\alpha_{r}}\leq c$ and $s_{\alpha_{r}}\nleq \phi$.
Hence, $\phi(\alpha_{r})$ is a positive root. 

Since $s_{\alpha_{r}}\nleq \phi$, $c(\alpha_{r})\neq c^{t}(\alpha_{j})$ for any 
$j\in J$ and for any $t=0, 1, \cdots a_{j}-1$.

If $\phi(\alpha_{r})=\alpha_{r}$, then, we have  

$$height(c(\alpha_{r}))=1=height(\alpha_{r})=height(\phi(\alpha_{r})).$$

Further, if $c(\alpha_{r})$ is a simple root, then, 
$\phi(\alpha_{r})=\alpha_{r}$. Proof is similar to the above case.

So, we may assume that  $c(\alpha_{r})$ is not a simple root and 
$\phi(\alpha_{r})\neq \alpha_{r}$. Hence, we can use lemma(5.4) to conclude
that either there are two distict 
elements $j$ and $k$ in $\{1,2, \cdots r-1\}$ such that both 
$\langle \alpha_{j} , \alpha_{r} \rangle$ and 
$\langle \alpha_{k} , \alpha_{r} \rangle$ are non-zero 
or there is a positive integer $j < r$ such that $s_{\alpha_{j}}\leq\phi$ and a 
positive integer $k>j$, $k\neq r$ such that both 
$\langle \alpha_{j} , \alpha_{r} \rangle$ and 
$\langle \alpha_{k} , \alpha_{j} \rangle$ are non-zero.

If there are two distict elements $j$ and $k$ in $\{1,2, \cdots r-1\}$ 
such that both $\langle \alpha_{j} , \alpha_{r} \rangle$ and 
$\langle \alpha_{k} , \alpha_{r} \rangle$ are non-zero, then,  
 we must have $c(\alpha_{r})\geq \phi(\alpha_{r})$.
Hence, we have $height(c(\alpha_{r}))\geq height(\phi(\alpha_{r}))$.

Otherwise, we use lemma(5.4) to conclude that 
$c(\alpha_{k})\neq \alpha_{j}$. Hence, we have
$s_{\alpha_{k}}\nleq\phi$. 
Thus, we have $c(\alpha_{r})+\alpha_{r}\geq \phi(\alpha_{r})+\alpha_{k}$.

Hence, we have $height(c(\alpha_{r}))\geq height(\phi(\alpha_{r}))$.

This completes the proof of (3).

\end{proof}

\subsection{Proof of theorem C}

In this subsection , we prove theorem C as follows:

\begin{theorem}
\begin{enumerate}
\item
Let $\tau\in W$. The cohomology module $H^{l(\tau)}(\tau, \tau^{-1}\cdot 0)$ 
is the one dimensional trivial representation of $B$. 
\item
Let $c$ be a Coxeter element of $W$. Then, 
$H^{i}(c, c^{-1}\cdot 0)$ is zero for every $i\neq l(c)$ if and only if both 
$X(c)_{T}^{ss}(\mathcal{L}_{\alpha_{0}})$ and 
$X(c^{-1})_{T}^{ss}(\mathcal{L}_{\alpha_{0}})$ are non-empty.
\end{enumerate}
\end{theorem}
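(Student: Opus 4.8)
\emph{Part (1).} The plan is to prove this by induction on $l(\tau)$ using the descending $1$-step construction. The base case $l(\tau)=0$ is clear since $\tau^{-1}\cdot 0 = 0$ and $H^0(\mathrm{pt},\mathbb{C}_0)=\mathbb{C}$. For the inductive step, write $\tau = s_\gamma w$ with $l(w)=l(\tau)-1$ for a suitable simple root $\gamma$; note that then $\tau^{-1}\cdot 0 = (w^{-1}s_\gamma)\cdot 0 = w^{-1}\cdot(s_\gamma\cdot 0) = w^{-1}\cdot(-\gamma)$, so we are looking at the weight $\mu := \tau^{-1}\cdot 0$ with $\langle \mu,\cdot\rangle$ chosen so the ascending construction raises cohomological degree by exactly one at each stage. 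The cleaner route is the ascending $1$-step construction: applying \emph{SES} with $g_1: Z(\tau)\to P_\gamma/B$ having fibres $Z(w)$, together with Lemma~2.2(2) (the case $\langle\lambda,\alpha\rangle\le -2$, or rather the Demazure shift that raises degree), one gets $H^{l(\tau)}(\tau,\tau^{-1}\cdot 0) \cong H^{l(w)}(w, w^{-1}\cdot 0)$ as $B$-modules, and one must check the weight bookkeeping makes the relevant $\langle\cdot,\gamma\rangle$ land in the regime where the map is an isomorphism onto the top cohomology. By induction this equals the one-dimensional trivial representation. The main care here is to verify that at each step the intermediate weight pairs with $\gamma$ in the correct range, which follows from standard properties of the dot action along a reduced word.

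\emph{Part (2), sufficiency.} Assume both $X(c)^{ss}_T(\mathcal{L}_{\alpha_0})$ and $X(c^{-1})^{ss}_T(\mathcal{L}_{\alpha_0})$ are non-empty. By Corollary~5.3 and the analysis of Coxeter elements there, $G$ is of type $A_n$ and (up to inversion) $c = s_{\alpha_n}s_{\alpha_{n-1}}\cdots s_{\alpha_1}$, so in particular $c^{-1}$ has the same property. The plan is to combine Theorem~A/Theorem~3.8 with the Euler-characteristic identity: for each $\beta\in R^+$ one knows $H^i(\tau,\beta)$ and for the tangent bundle $\mathfrak{g}/\mathfrak{b}$ one has the vanishing of higher cohomology, while $H^0(c,\mathfrak{g}/\mathfrak{b}) = \mathfrak{g}$ precisely when $X(c^{-1})^{ss}_T(\mathcal{L}_{\alpha_0})\ne\emptyset$. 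One then relates $H^i(c, c^{-1}\cdot 0)$ to a product/alternating sum over the Bott--Samelson factors of the $SL_2$-cohomologies of line bundles whose weights are the $c^t(\alpha_j)$, using the decomposition $c = \tau\phi$ from Lemma~5.6. The height-monotonicity statement Lemma~5.6(3) is exactly what guarantees that along the reduced word each intermediate weight pairs non-negatively (or $\le -2$) with the next simple root, so that at each Bott--Samelson stage only one cohomological degree survives; summing the degrees gives $l(c)$ and no other degree appears. Symmetrically using the hypothesis on $X(c)^{ss}_T(\mathcal{L}_{\alpha_0})$ handles the dual bookkeeping (e.g. for $c^{-1}$, or equivalently for the other end of the word).

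\emph{Part (2), necessity.} For the converse, suppose $H^i(c,c^{-1}\cdot 0)=0$ for all $i\ne l(c)$. The strategy is contrapositive: if (say) $X(c^{-1})^{ss}_T(\mathcal{L}_{\alpha_0})=\emptyset$, then by \emph{Observation 1} in the proof of Theorem~3.8 we have $-\alpha_0\in c(R^-)$, equivalently $H^0(c,\mathfrak{b})\ne 0$, so $H^0(c,\mathfrak{g}/\mathfrak{b})$ is a proper quotient of $\mathfrak{g}$; tracking this deficiency through the Bott--Samelson filtration forces some $H^0(c,\beta)$ with $\beta\in R^+$ to be smaller than expected, which via the Demazure exact sequences forces a nonzero cohomology of $\mathcal{L}_{c^{-1}\cdot 0}$ in a degree strictly below $l(c)$ — contradicting the hypothesis. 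The case $X(c)^{ss}_T(\mathcal{L}_{\alpha_0})=\emptyset$ is handled by the analogous argument applied to the reverse reduced word (or to $c^{-1}$, using that $H^i(c,c^{-1}\cdot 0)$ and $H^i(c^{-1},c\cdot 0)$ are governed by the same combinatorics up to Serre-type duality). The hardest step is this necessity direction: pinning down \emph{which} intermediate cohomological degree picks up a nonzero contribution requires carefully locating the first index along the word where Lemma~5.6(3)'s height inequality becomes strict, and arguing that the corresponding Demazure shift lands in the $\langle\cdot,\gamma\rangle=-1$ vanishing regime for the ``good'' case but not when semistability fails.
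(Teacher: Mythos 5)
Your part (1) takes a different route from the paper (which simply combines Borel--Weil--Bott with the non-vanishing result [9, Prop.\ 4.2] and the surjectivity of the restriction map [9, Cor.\ 4.3]); an induction via the Demazure shift is plausible, but as written the essential step is deferred: after the shift $H^{l(\tau)}(\tau,\tau^{-1}\cdot 0)\cong H^{l(w)}(\tau,w^{-1}\cdot 0)$ you still must pass through $H^{l(w)}(w,V_{w^{-1}\cdot 0,\alpha})$ and use both Demazure exact sequences plus top-degree vanishing to get back to $H^{l(w)}(w,w^{-1}\cdot 0)$, and none of this bookkeeping is carried out. This is repairable, so I regard it as a minor (if real) incompleteness.

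The genuine gaps are in part (2). For sufficiency, the paper's proof is geometric: after [10, Thm.\ 4.2] reduces to type $A_n$ with $c=s_{\alpha_n}\cdots s_{\alpha_1}$, one checks $c=w_{\alpha_1}$, identifies $c^{-1}\cdot 0=-(n+1)\omega_{\alpha_1}$ with the canonical bundle of $G/Q_{\alpha_1}$, and transfers its concentration in top degree through the birational morphism $X(w_{\alpha_1})\to G/Q_{\alpha_1}$. Your plan instead invokes an ``Euler-characteristic identity'' together with Lemma 5.6(3) to claim that at each Bott--Samelson stage ``only one cohomological degree survives''; an Euler characteristic can never yield vanishing of individual $H^i$, Lemma 5.6(3) concerns the factorization $c=\tau\phi$ and is used in the paper for the \emph{necessity} direction (to produce non-vanishing, not vanishing), and you give no argument that the intermediate weights along the word always pair with the next simple root in the ranges $\geq 0$ or $\leq -2$. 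Theorem 3.8 about $H^0(c,\mathfrak g/\mathfrak b)$ plays no role in the line-bundle computation. For necessity, your contrapositive asserts that emptiness of $X(c^{-1})^{ss}_T(\mathcal L_{\alpha_0})$ gives $H^0(c,\mathfrak b)\neq 0$, hence a ``deficiency'' in $H^0(c,\mathfrak g/\mathfrak b)$, which then ``via the Demazure exact sequences'' forces $H^i(c,c^{-1}\cdot 0)\neq 0$ for some $i<l(c)$. No mechanism is exhibited linking the cohomology of the bundles $\mathcal L_\beta$, $\beta\in R^+$, to that of the very negative weight $c^{-1}\cdot 0$, and I do not see one; the paper's argument is entirely different: it builds, from the orbit structure of simple roots under $c$ (Lemmas 5.4--5.6), the factorization $c=\tau\phi$ and shows directly that the $c^{-1}\cdot 0-\phi^{-1}\cdot 0$ weight space of $H^{l(\phi)}(c,c^{-1}\cdot 0)$ is non-zero, so the vanishing hypothesis forces $\phi=c$, whence type $A_n$ with the standard ordering and then non-emptiness of both semistable loci. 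Your appeal to a ``Serre-type duality'' between $H^i(c,c^{-1}\cdot 0)$ and $H^i(c^{-1},c\cdot 0)$ is also unjustified (Schubert varieties are singular, and the paper never uses such a duality). As it stands, both directions of part (2) are missing their key arguments.
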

\begin{proof}

{\it Proof of (1):} 

Since $\tau\cdot\tau^{-1}\cdot 0=0$, by the 
Borel-Weil-Bott's theorem, $H^{l(\tau)}(w_{0}, \tau^{-1}\cdot 0)$ is the one
dimensional trivial representation of $G$. On the otherhand, 
by [9, Proposition (4.2)], 
$H^{l(\tau)}(\tau, \tau^{-1}\cdot 0)$ is non-zero. 

By [9, corollary (4.3)], the restriction map 
$H^{l(\tau)}(w_{0}, \tau^{-1}\cdot 0)\longrightarrow 
H^{l(\tau)}(\tau, \tau^{-1}\cdot 0)$ is surjective. Thus, 
$H^{l(\tau)}(\tau, \tau^{-1}\cdot 0)$ is the one dimensional trivial 
representation of $G$. 
This proves (1).

{\it Proof of (2):}

{\it Proof of sufficinet condition:}

We first prove that if both 
$X(c)_{T}^{ss}(\mathcal{L}_{\alpha_{0}})$ and 
$X(c^{-1})_{T}^{ss}(\mathcal{L}_{\alpha_{0}})$ are non-empty, then, 
$H^{i}(c, c^{-1}\cdot 0)$ is zero for every $i\neq l(c)$.

We now assume that both $X(c)_{T}^{ss}(\mathcal{L}_{\alpha_{0}})$ and 
$X(c^{-1})_{T}^{ss}(\mathcal{L}_{\alpha_{0}})$ are non-empty.

Then by [10, theorem (4.2)], we have 
$G$ must be of type $A_{n}$ and either $c=s_{\alpha_{n}}s_{\alpha_{n-1}}\cdots s_{\alpha_{1}}$ or $c^{-1}=s_{\alpha_{n}}s_{\alpha_{n-1}}\cdots s_{\alpha_{1}}$.

With out loss of generality, we may assume that 
$c=s_{\alpha_{n}}s_{\alpha_{n-1}}\cdots s_{\alpha_{1}}$. 

{\it Step 1} We show that  $c^{r}=w_{\alpha_{r}}$ for every $r=1, 2, \cdots n$.

Using a simple computation, we see that 
$c(\alpha_{1})=-(\sum_{t=1}^{n}\alpha_{t})$ and that $c(\alpha_{j})=\alpha_{j-1}$
for every $j=2, 3, \cdots n$.

Now, let $\overline{m}$ denote the remainder when $m$ is divided by 
$n+1$.

Using recursion on $r$, we can show that 
$c^{r}(\alpha_{r})=-(\sum_{t=1}^{n}\alpha_{t})$ and
$c^{r}(\alpha_{j})=\alpha_{\overline{n+1+j-r}}$ for every $j\neq r$.

Hence, we have  $R^{+}(c^{r})=\{\beta\in R^{+}: \beta \geq \alpha_{r} \}$
for every $r=1, 2, \cdots n$. On the otherhand, we have 
$R^{+}(w_{\alpha_{r}})=\{\beta\in R^{+}: \beta \geq \alpha_{r} \}$.  
Thus, we have $c^{r}=w_{\alpha_{r}}$ for every $r=1, 2, \cdots n$.

This proves {\it Step 1}.

We consider the natural projection $\pi_{r}:G/B\longrightarrow G/Q_{\alpha_{r}}$
given by $\pi_{r}(xB)=xQ_{\alpha_{r}}$.

Since $R^{+}(w_{\alpha_{r}})=\{\beta\in R^{+}: \beta \geq \alpha_{r} \}$,  
$w_{\alpha_{r}}^{-1}\cdot 0$ is equal to $\sum_{\beta\geq \alpha_{r}}-\beta$
of all negative roots $-\beta$ such that $\beta\geq \alpha_{r}$.

On the otherhand, we have $\sum_{\beta\geq \alpha_{r}}-\beta$
is equal to the multiple $(n+1)\omega_{\alpha_{r}}$ of the fundamental weight
$\omega_{\alpha_{r}}$ corresponding to the simple root $\alpha_{r}$.

Since $\mathcal{L}_{-(n+1)\omega_{\alpha_{r}}}$ is the canonical line bundle 
on $G/Q_{\alpha_{r}}$, $H^{i}(G/Q_{\alpha_{r}}, \mathcal{L}_{-(n+1)\omega_{\alpha_{r}}})$
vanishes for every $i\neq dim(G/Q_{\alpha_{r}})$. Since 
$dim(G/Q_{\alpha_{r}})=l(w_{\alpha_{r}})$, 
$H^{i}(G/Q_{\alpha_{r}}, \mathcal{L}_{-(n+1)\omega_{\alpha_{r}}})$
vanishes for every $i\neq l(w_{\alpha_{r}})$. 

Thus, we have 

{\it Observation 1:} $H^{i}(G/Q_{\alpha_{r}}, \mathcal{L}_{w_{\alpha_{r}}^{-1}\cdot 0})$
vanishes for every $i\neq l(w_{\alpha_{r}})$. 

Since the restriction of $\pi_{r}$ to $X(w_{\alpha_{r}})$ is a 
birational morphism, the pull back map 
$\pi_{r}^{*}:H^{i}(w_{\alpha_{r}}, w_{\alpha_{r}}^{-1}\cdot 0)\longrightarrow 
H^{i}(G/Q_{\alpha_{r}}, \mathcal{L}_{w_{\alpha_{r}}^{-1}\cdot 0})$
is an isomorphism for every $i$. 

{\it Proof of sufficinet condition} follows from {\it Observation 1}. 

{\it Proof of necessary condition:}

We now prove the necessary condition.

Let $c$ be a Coxeter element of $W$ such that $H^{i}(c, c^{-1}\cdot 0)$ is zero
for every $i\neq l(c)$. Firstly, we can find an ordering 
$\{\alpha_{1}, \alpha_{2} , \cdots \alpha_{n} \}$ of simple 
roots such that $c=s_{\alpha_{n}}s_{\alpha_{n-1}}\cdots s_{\alpha_{2}}
s_{\alpha_{1}}$ is a reduced expression for $c$.

Let $J$ be as in subsection 5.1. For any $j\in J$, we take 
$\phi_{j}=s_{\alpha_{j}} s_{c(\alpha_{j})}\cdots s_{c^{a_{j}-1}(\alpha_{j})}$ as in 
subsection 5.1. By lemma(5.6), we see that $\phi_{j}$ commutes with $\phi_{k}$ 
for any $j$ and $k$ in $J$.  

As in lemma(5.6), we denote the product $\Pi_{j\in J}\phi_{j}$ of the 
$\phi_{j}$'s by $\phi$. as in lemma(5.6), we can write 
$c$ as a product $c=\tau\phi$ with $l(c)=l(\phi)+l(\tau)$.

{\it Claim:}

We first show that $c^{-1}\cdot 0  - \phi_{j}^{-1}\cdot 0$- weight space of 
$H^{l(\phi_{j})}(\phi_{j}, c^{-1}\cdot 0)$ is non-zero.

We first note that $-\phi_{j}^{-1}\cdot 0$ is equal to the sum of all
positive roots $\beta$ in $R^{+}(\phi_{j})$. The set $R^{+}(\phi_{j})$ 
consists of precisely the roots of the  form 
$\sum_{i=r}^{a_{j}-1}c^{i}(\alpha_{j})$, $r$-running over all integers from $0 , 
1, \cdots , a_{1}-1$.   

Since $c^{i}(\alpha_{j})$ is a simple root for every 
$i=0, 1, 2, \cdots a_{j}-1$ and $c^{a_{j}}(\alpha_{j})$ is a negative root,
we have $c^{a_{j}}(\alpha_{j}) \leq - \sum_{i=0}^{a_{j}-1}c^{i}(\alpha_{j})$.
On the otherhand, we have $c^{-1}\cdot 0= c^{-1}(\rho) - \rho$.

Hence, we have 
$$\langle  c^{-1}\cdot 0 , c^{a_{j}-1}(\alpha_{j}) \rangle = 
\langle \rho  , c(c^{a_{j}-1}(\alpha_{j}))  \rangle  - \langle \rho  , 
c^{a_{j}-1}(\alpha_{j}) \rangle  \leq \langle \rho  , 
\sum_{i=0}^{a_{j}-1}c^{i}(\alpha_{j}) \rangle - 1   =  -a_{j}-1.$$

For simplicity of notation, we let $\gamma_{i}= c^{i}(\alpha_{j})$
for every $i=0,1, 2, \cdots a_{j}-1$.

By lemm(2.3), $c^{-1}\cdot 0 + a_{j} \gamma_{a_{j}-1}$-weight 
space of $H^{1}(s_{\gamma_{a_{j}-1}}, c^{-1}\cdot 0)$ is non-zero.

Further, we have,  
$$\langle  c^{-1}\cdot 0 + a_{j} \gamma_{a_{j}-1} , \gamma_{a_{j}-2} 
\rangle = \langle \rho  , c(\gamma_{a_{j}-2})  \rangle  - \langle \rho  , 
\gamma_{a_{j}-2} \rangle -a_{j}= -a_{j}.$$

Since each $\gamma_{a_{j}-2}$-string of weights of 
$H^{1}(s_{\gamma_{a_{j}-1}}, c^{-1}\cdot 0)$ is of length one, each indecomposable 
$B_{\gamma_{a_{j}-2}}$- module is one dimensional.

Inparticular, the one dimensional $\mathbb{C}_{c^{-1}\cdot0 +a_{j}\gamma_{a_{j}-1}}$
is $B_{\gamma_{a_{j}-2}}$-direct summand of 
$H^{1}(s_{\gamma_{a_{j}-1}}, c^{-1}\cdot 0)$.

Using the same argument again, we see that 
the one dimensional space $\mathbb{C}_{c^{-1}\cdot 0 +a_{j}\gamma_{a_{j}-1}+(a_{j}-1)\gamma_{a_{j}-2}}$ is $B_{\gamma_{a_{j}-3}}$-direct summand of 
$H^{2}(s_{\gamma_{a_{j}-2}}s_{\gamma_{a_{j}-1}} , c^{-1}\cdot 0 )$.

Proceeding recursively, we can show that 
$c^{-1}\cdot 0  - \sum_{i=0}^{a_{j}-1}(i+1)\gamma_{i}$- weight space of 
$H^{l(\phi_{j})}(\phi_{j}, c^{-1}\cdot 0)$ is non-zero.

Since $ \sum_{i=0}^{a_{j}-1}(i+1)\gamma_{i} = \phi_{j}^{-1}\cdot 0$,
it follows that $c^{-1}\cdot 0  - \phi_{j}^{-1}\cdot 0$- weight space of 
$H^{l(\phi_{j})}(\phi_{j}, c^{-1}\cdot 0)$ is non-zero.

Using the same process, we can show that 
$c^{-1}\cdot 0  - \phi^{-1}\cdot 0$- weight space of 
$H^{l(\phi)}(\phi, c^{-1}\cdot 0)$ is non-zero.

This proves the {\it Claim}.

We now prove that the $c^{-1}\cdot 0 - \phi^{-1}\cdot 0$- weight space of 
$H^{l(\phi)}(c , c^{-1}\cdot 0)$ is non-zero. 

Let $r\in \{1, 2, \cdots n\}$ be such that $s_{\alpha_{r}}\leq \tau$.
That is, $s_{\alpha_{r}}\leq c$ but $s_{\alpha_{r}}\nleq \phi$.

$$\langle  c^{-1}\cdot 0 - \phi^{-1}\cdot 0 , \gamma_{r} \rangle = \langle \rho , c(\alpha_{r}) \rangle - \langle \rho  , \phi(\alpha_{r}) \rangle 
= height(c(\alpha_{r})) -height(\phi(\alpha_{r})).$$ 

Hence, using lemma(5.6), we see that 
$height(c(\alpha_{r}))\geq height(\phi(\alpha_{r}))$.

Thus, we conclude that the line bundle 
$\mathcal{L}_{\mathbb{C}_{c^{-1}\cdot 0  - \phi^{-1}\cdot 0}}$ corresponding to the 
one dimensional $B_{\alpha_{r}}$-module 
$\mathbb{C}_{c^{-1}\cdot 0  - \phi^{-1}\cdot 0}$ is an effective line bundle 
on $P_{\alpha_{r}}/B$. 

Hence, $\mathbb{C}_{c^{-1}\cdot 0  - \phi^{-1}\cdot 0}$ is a 
$B_{\alpha_{t}}$-direct summand of $H^{0}(s_{\alpha_{r}}, H^{l(\phi)}(\phi, c^{-1}\cdot 0)$ for every $t\neq r$ such that $s_{\alpha_{t}}\leq \tau$.

Using the same argument recursively, we conclude that the 
$c^{-1}\cdot 0  - \phi^{-1}\cdot 0$-weight space of 
$H^{0}(\tau , H^{l(\phi)}(\phi, c^{-1}\cdot 0))$ is non-zero.

Using {\it SES}, we see that 
$c^{-1}\cdot 0  - \phi^{-1}\cdot 0$-weight space of 
$H^{l(\phi)}(c, c^{-1}\cdot 0))$ is non-zero.

On the otherhand, by hypothesis, we have $H^{i}(c, c^{-1}\cdot 0)$ is zero 
for every $i\neq l(c)$. Hence, we have $\phi=c$. Since $G$ is simple, the 
Dynkin diagram of the Lie algebra $\mathfrak{g}$ of $G$ is connected.
Hence, $J$ can have only point $\{j\}$ as the $\phi_{i}$'s commute with 
each other.

Further, by our chosen reduced expression $c=s_{\alpha_{n}}s_{\alpha_{n-1}}\cdots s_{\alpha_{2}}s_{\alpha_{1}}$ for $c$, $j$ must be $n$ and 
$c^{k}(\alpha_{n})=\alpha_{n-k}$ for every $k=0,1, \cdots n-1$.

Hence $G$ must be of type $A_{n}$ and the ordering of the simple roots is 
simply the ordering in its Dynkin diagram.

This proves the {\it necessary condition}.

This completes the proof of theorem.

\end{proof}

Let $c$ be a Coxeter element of $W$. Let $C$ denote the cyclic subgroup of 
$W$ generated by $c$. Then, the order of $C$ is the Coxeter number and we 
denote it by $h$. The sum $\sum_{\tau\in C}\sum_{j=0}^{l(\tau)}h^{j}(\tau,\tau^{-1}\cdot 0)$ of the chracters $h^{j}(\tau,\tau^{-1}\cdot 0)$ of the cohomolgy 
modules $H^{j}(\tau,\tau^{-1}\cdot 0)$, $\tau$ running over all elements of 
$C$ and $j$ running over all integers from $\{0, 1, 2, \cdots l(\tau)\}$
is an element of the representation ring $\mathbb{Z}[X(T)]$ of $T$.

The following corollary is another application of our main theorem. 
 
\begin{corollary}
$\sum_{\tau\in C}\sum_{i=0}^{l(\tau)}h^{i}(\tau,\tau^{-1}\cdot 0)$ is equal
to the Coxeter number $h$ if and only if both 
$X(c)_{T}^{ss}(\mathcal{L}_{\alpha_{0}})$ and 
$X(c^{-1})_{T}^{ss}(\mathcal{L}_{\alpha_{0}})$ are non-empty.
\end{corollary}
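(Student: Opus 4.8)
The plan is to reduce the corollary to Theorem~C(2) together with Theorem~C(1) and a positivity argument. By Theorem~C(1), for every $\tau\in W$ the top cohomology $H^{l(\tau)}(\tau,\tau^{-1}\cdot 0)$ is the one-dimensional trivial $B$-module, so its character is $1$ (the identity of $\mathbb{Z}[X(T)]$). Hence $\sum_{\tau\in C}h^{l(\tau)}(\tau,\tau^{-1}\cdot 0)=h$ always, since $|C|=h$. Therefore the claimed equality $\sum_{\tau\in C}\sum_{i=0}^{l(\tau)}h^{i}(\tau,\tau^{-1}\cdot 0)=h$ is equivalent to the vanishing of all the lower characters, i.e. to $H^{i}(\tau,\tau^{-1}\cdot 0)=0$ for every $\tau\in C$ and every $i\neq l(\tau)$.

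First I would make the elementary but essential observation that characters of cohomology modules of line bundles on Bott--Samelson varieties, when regarded as elements of $\mathbb{Z}[X(T)]$, need not be effective individually, so one cannot simply compare the total sum against $h$ termwise; one must exploit that the discrepancy $\sum_{\tau,i\neq l(\tau)}h^{i}(\tau,\tau^{-1}\cdot 0)$ is a sum of characters of genuine $B$-modules and hence is zero in $\mathbb{Z}[X(T)]$ if and only if each such $H^{i}(\tau,\tau^{-1}\cdot 0)$ is the zero module. (Here the point is that $H^{i}(\tau,\tau^{-1}\cdot 0)$ is a finite-dimensional $B$-module, so its character is a nonnegative integral combination of weights, and a sum of such is zero only if every summand is.) Granting this, the sufficient direction is immediate: if both $X(c)_{T}^{ss}(\mathcal{L}_{\alpha_{0}})$ and $X(c^{-1})_{T}^{ss}(\mathcal{L}_{\alpha_{0}})$ are non-empty, then by Theorem~C(2) we get $H^{i}(c,c^{-1}\cdot 0)=0$ for $i\neq l(c)$, and by [10, theorem(4.2)] (as invoked in the proof of Theorem~C) $G$ is of type $A_{n}$ with $c$ (up to inverse) the standard Coxeter element, whence $c^{r}=w_{\alpha_{r}}$ and each $X(c^{r})$ is birational to $G/Q_{\alpha_{r}}$ with $c^{-r}\cdot 0$ restricting to the canonical bundle; then \emph{Observation 1} in the proof of Theorem~C gives $H^{i}(c^{r},c^{-r}\cdot 0)=0$ for $i\neq l(c^{r})=l(w_{\alpha_{r}})$ for every $r$, so all lower characters vanish and the total sum is $h$.

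For the necessary direction I would run the contrapositive using exactly the weight-space argument of the proof of Theorem~C(2). If the two semistable loci are not both non-empty, then by Theorem~C(2) applied to $c$ we have $H^{i}(c,c^{-1}\cdot 0)\neq 0$ for some $i\neq l(c)$; more precisely, the argument there (the \emph{Claim}, lemma(5.6), and the recursive \emph{SES} propagation) exhibits a specific non-zero weight space of $H^{l(\phi)}(c,c^{-1}\cdot 0)$ with $l(\phi)<l(c)$ whenever $\phi\neq c$. This produces a genuine non-trivial character contributing to the sum over $\tau=c$ with $i=l(\phi)\neq l(c)$, so the total exceeds $h$. The main obstacle is ensuring that these extra weight-space contributions coming from different $\tau\in C$ cannot cancel in $\mathbb{Z}[X(T)]$; this is handled precisely by the effectivity remark above — every $h^{i}(\tau,\tau^{-1}\cdot 0)$ is the character of an honest $B$-module, so the full sum $\sum_{\tau\in C}\sum_{i=0}^{l(\tau)}h^{i}(\tau,\tau^{-1}\cdot 0)$ is an effective element of $\mathbb{Z}[X(T)]$, and it equals $h$ (the character of $h$ copies of the trivial module) if and only if all the non-top summands vanish. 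Thus the equality forces $H^{i}(c,c^{-1}\cdot 0)=0$ for $i\neq l(c)$, and Theorem~C(2) then yields the non-emptiness of both semistable loci, completing the proof.
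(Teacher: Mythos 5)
Your proposal is correct and takes essentially the same route as the paper: Theorem~C(1) accounts for the contribution $h$ from the top cohomologies, and Theorem~C(2), combined with the effectivity of the characters of the genuine $B$-modules $H^{i}(\tau,\tau^{-1}\cdot 0)$, reduces the equality to the vanishing of all lower cohomologies. You merely make explicit two points the paper leaves implicit — the positivity argument in $\mathbb{Z}[X(T)]$ and the extension of the vanishing from $\tau=c$ to all powers $c^{r}=w_{\alpha_{r}}$ via the canonical-bundle argument in the sufficiency part of the proof of Theorem~C — so no further changes are needed.
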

\begin{proof}

By theorem(5.5(1)), $H^{l(\tau)}(\tau, \tau^{-1}\cdot 0)$ 
is the one dimensional trivial representation of $B$
for every element $\tau$ in $W$. Therefore, 
the sum $\sum_{\tau\in C}\sum_{i=0}^{l(\tau)}h^{l(\tau)}(\tau,\tau^{-1}\cdot 0)$ 
is equal to $h.1$.

Again using theorem (5.5(2)), we see that $\sum_{\tau\in C}\sum_{i=0}^{l(\tau)-1}h^{i}(\tau,\tau^{-1}\cdot 0)$ is zero if and only both 
$X(c)_{T}^{ss}(\mathcal{L}_{\alpha_{0}})$ and 
$X(c^{-1})_{T}^{ss}(\mathcal{L}_{\alpha_{0}})$ are non-empty.

This completes proof of corollary.

\end{proof}

\section{Proof of theorem B }

Throughout this section, we assume that $G$ is simple, simply connected
algebraic group over $\mathbb{C}$ which is not simply laced.

We first prove that when $G$ is not simply laced, then , there is a 
positive root $\beta$ and a simple root $\alpha$ such that $s_{\alpha} \cdot \beta$ is the highest short root. 

\begin{lemma}
Let $G$ be a simple algebraic group which is not simply laced.
Then, there is a positive root $\beta$ and a simple root $\alpha$ such 
that $s_{\alpha}\cdot \beta$ is the highest short root. 
\end{lemma}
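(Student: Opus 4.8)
The plan is to work with the highest short root $\beta_0$ directly and find a simple root $\alpha$ for which $s_\alpha\cdot\beta_0$ is a root that is \emph{higher} than $\beta_0$ in the partial order; one then sets $\beta = s_\alpha\cdot\beta_0$ (or more precisely argues that $\beta_0 = s_\alpha\cdot\beta$ for a suitable positive root $\beta$). Recall the dot action is $s_\alpha\cdot\mu = s_\alpha(\mu+\rho)-\rho = s_\alpha(\mu) - \alpha$. So for any root $\mu$ with $\langle \mu,\alpha\rangle = 0$ we get $s_\alpha\cdot\mu = \mu - \alpha$, which is not what we want; we want instead to \emph{raise} $\beta_0$, so we should look for $\alpha$ with $s_\alpha\cdot\beta = \beta_0$, i.e. $\beta = s_\alpha^{-1}\cdot\beta_0 = s_\alpha(\beta_0) - \alpha$ as well (since $s_\alpha = s_\alpha^{-1}$). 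Thus the task reduces to: \emph{find a simple root $\alpha$ such that $\beta := s_\alpha(\beta_0) - \alpha$ is a positive root.}

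First I would recall the standard fact that the highest short root $\beta_0$ is dominant, so $\langle \beta_0,\alpha\rangle \ge 0$ for every simple root $\alpha$. In the non–simply-laced case $\beta_0$ is the highest weight of a nontrivial irreducible representation (the "small" adjoint-type representation), and it is a short root, while the highest root $\alpha_0$ is long; in particular $\beta_0 \neq \alpha_0$, so there is at least one simple root $\alpha$ with $\langle\beta_0,\alpha\rangle = c > 0$. For such an $\alpha$ we have $s_\alpha(\beta_0) = \beta_0 - c\,\alpha$, hence $\beta = s_\alpha(\beta_0) - \alpha = \beta_0 - (c+1)\alpha$. I would then need to check that $\beta_0 - (c+1)\alpha$ is actually a root. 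The cleanest way is to use the $\alpha$-root string through $\beta_0$: since $\beta_0$ is dominant, the string $\beta_0, \beta_0-\alpha, \dots, \beta_0 - \langle\beta_0,\alpha\rangle\alpha$ has the reflection $s_\alpha(\beta_0) = \beta_0 - c\alpha$ at its bottom, so $\beta_0 - (c+1)\alpha$ is \emph{not} on this string. So I must instead choose $\alpha$ with $c = 1$ and argue $\beta_0 - 2\alpha$ is a root by an ad hoc/case argument, or — better — realize the right normalization: one wants $\beta$ with $s_\alpha(\beta) - \alpha = \beta_0$, which forces $\langle\beta,\alpha\rangle$ to be $-1$ and $\beta = \beta_0 + \alpha$ or $\beta = s_\alpha(\beta_0+\alpha)$. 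Concretely, take $\alpha$ a simple root with $\langle\beta_0,\alpha\rangle = 1$ and with $\alpha$ \emph{long} relative to $\beta_0$ where the diagram has its multiple bond; then $\beta := \beta_0 + \alpha$ should be the relevant positive root (this is where the short/long distinction enters decisively, and why simply-laced groups are excluded). The main obstacle is verifying $\beta_0 + \alpha \in R^+$ and $s_\alpha\cdot\beta = \beta_0$: this is a finite computation in each non–simply-laced type $B_n, C_n, F_4, G_2$, which I would organize by exhibiting, in each type, the explicit simple root $\alpha$ adjacent to the short end of the Dynkin diagram and the positive root $\beta$ (e.g. in $B_n$: $\beta_0 = \epsilon_1$, $\alpha = \alpha_n = \epsilon_n$ won't be adjacent, so pick the correct neighbor; in $G_2$: $\beta_0 = $ highest short root $= 2\alpha_1+\alpha_2$ with $\alpha_1$ short, $\alpha = \alpha_1$, $\beta = 3\alpha_1+\alpha_2$ the highest long root $=\alpha_0$).

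I expect the proof to be short once the case division is set up, since in each type both $\beta_0$ and the highest long root $\alpha_0$ have standard explicit expressions, and one simply checks that a single simple reflection (dotted) connects a positive root $\beta$ to $\beta_0$. The key structural input is: $\beta_0$ is dominant and $\beta_0 \neq \alpha_0$ precisely when $G$ is not simply laced, and the simple root realizing $\langle\beta_0,\alpha\rangle \neq 0$ is the one at the "transition" of the diagram where root lengths change. The hard part — really the only nontrivial point — is identifying the correct $\beta$ and confirming $s_\alpha\cdot\beta = \beta_0$ rather than some other root; I would double-check this using $\langle\beta,\alpha\rangle = -1$ (so that $s_\alpha(\beta) = \beta+\alpha$ and $s_\alpha\cdot\beta = \beta$, which is wrong) — meaning the correct condition is actually $\langle\beta,\alpha\rangle = 1$ giving $s_\alpha\cdot\beta = \beta - 2\alpha$, forcing yet another reconsideration of which root $\beta$ to pick. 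Resolving this sign/string bookkeeping cleanly, uniformly across the four types, is the crux; everything else is routine.
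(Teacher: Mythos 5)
Your opening reduction is correct: since the dot action of $s_{\alpha}$ is an involution, it suffices to produce a simple root $\alpha$ for which $\beta:=s_{\alpha}\cdot\beta_{0}=s_{\alpha}(\beta_{0})-\alpha$ is a positive root, where $\beta_{0}$ is the highest short root. But the proposal never produces such an $\alpha$, and the ``sign/string bookkeeping'' you defer is exactly where it breaks. If $s_{\alpha}\cdot\beta=\beta_{0}$ then $\langle\beta,\alpha\rangle=-\langle\beta_{0},\alpha\rangle-2\leq -2$, so neither of the conditions you cycle through ($\langle\beta,\alpha\rangle=-1$, then $+1$) can hold; the correct one is $\langle\beta_{0},\alpha\rangle=0$, hence $\langle\beta,\alpha\rangle=-2$ and $\beta=\beta_{0}-\alpha$. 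Ironically you brush right past this when you dismiss ``$\langle\mu,\alpha\rangle=0$ gives $s_{\alpha}\cdot\mu=\mu-\alpha$, which is not what we want'': it is precisely what is wanted, applied to $\mu=\beta_{0}$. Your first attempt (take $\alpha$ with $\langle\beta_{0},\alpha\rangle=c>0$) genuinely fails outside $G_{2}$: in $C_{n}$, $\beta_{0}=\epsilon_{1}+\epsilon_{2}$, the only such simple root is $\alpha_{2}$ with $c=1$, and $\beta_{0}-2\alpha_{2}$ is not a root. Your one worked example is also wrong: in $G_{2}$ with $\beta=3\alpha_{1}+\alpha_{2}=\alpha_{0}$ one gets $s_{\alpha_{1}}\cdot\beta=\alpha_{2}-\alpha_{1}$, not a root; the correct choice there is $\beta=\alpha_{2}$ (equivalently $\beta_{0}-2\alpha_{1}$), $\alpha=\alpha_{1}$. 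Since the promised four-type case check is not carried out either, what remains is a plan with the decisive step missing, not a proof.

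For contrast, the missing existence statement is what the paper actually proves, and uniformly: for $G$ not of type $G_{2}$ there is a simple root $\alpha$ with $\beta_{0}+\alpha\in R$ and $\langle\beta_{0},\alpha\rangle=0$. Existence of $\alpha$ with $\beta_{0}+\alpha\in R$ follows because $\mathfrak{g}$ is irreducible and $\beta_{0}\neq\alpha_{0}$ (this is where non-simply-lacedness enters), so $x_{\beta_{0}}$ is not a highest weight vector and some $\mathrm{ad}\,x_{\alpha}$ does not kill it; then dominance gives $\langle\beta_{0},\alpha\rangle\geq 0$, while $\langle\beta_{0}+\alpha,\alpha\rangle\leq 2$ (no Cartan integer of absolute value $3$ outside $G_{2}$) gives $\langle\beta_{0},\alpha\rangle\leq 0$. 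The $\alpha$-string through $\beta_{0}$ then shows $\beta:=\beta_{0}-\alpha$ is a root, necessarily positive, and $s_{\alpha}\cdot\beta=\beta_{0}$; type $G_{2}$ is handled by the explicit choice above. If you want to salvage your route, this pairing-zero criterion (or an honest explicit list of $(\alpha,\beta)$ in types $B_{n}$, $C_{n}$, $F_{4}$, $G_{2}$) is the ingredient you must supply.
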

\begin{proof}

If $G$ is of type $G_{2}$, then, the simple roots $\alpha_{1}$ and 
$\alpha_{2}$ satisfy the following:

$\langle \alpha_{1} , \alpha_{2} \rangle =-1$ and 
$\langle \alpha_{2} , \alpha_{1} \rangle =-3$.

Here, we follow the convention in [6].

In this case, we take $\beta=\alpha_{2}$ and $\alpha=\alpha_{1}$.
Hence, $s_{\alpha}\cdot \beta=\alpha_{2}+2\alpha_{1}$ is the highest 
short root. 
 
Hence, we may assume that $G$ is a simple algebraic group of type 
$B_{n}$, $C_{n}$ or $F_{4}$.

Let $\nu$ be a the highest short root. 
We now show that there is a simple root $\alpha$ such that 
$\nu+\alpha$ is a root and $\langle \nu , \alpha \rangle  = 0$.

To show that $\nu+\alpha$ is a root, it is sufficient to show that 
the weight space $\mathfrak{g}_{\nu+\alpha}$ is non-zero.

On the otherhand, $\mathfrak{g}_{\nu+\alpha}$ is non-zero is a statement 
independent of the characteristic. So, we may assume that 
$k=\mathbb{C}$. Hence, $\mathfrak{g}$ is an irreducible $G$-module.

Hence, $\mathfrak{g}_{\alpha_{0}}$ is the unique $B^{+}$-stable line in
$\mathfrak{g}$. Hence, there is a simple root $\alpha$ such that 
$ad\mathfrak{X}_{\alpha}(\mathfrak{g}_{\nu})$ is non-zero.
Thus, $\nu+\alpha$ is a root. 

Since $\nu$ is dominant, we have 

{\it Observation 1}

$\langle \nu , \alpha \rangle \geq 0$.

On the other hand, since $G$ is not of type $G_{2}$, 
$\langle \nu +\alpha , \alpha \rangle \leq 2$.
Hence, we have $\langle \nu , \alpha \rangle \leq  0$
By {\it Observation }, we have 
$\langle \nu , \alpha \rangle  = 0$.

Proof of the lemma follows by taking $\beta=s_{\alpha}\cdot\nu$.

\end{proof}

Let $\alpha$ and $\beta$ be as in lemma(6.1). 

Let $\tau\in W$ be such that $s_{\alpha}\leq \tau$. 

Let $V=\bigoplus_{\mu\leq \beta}\mathfrak{g}_{\mu}$ be the direct sum of all 
$T$-weight spaces  of weights $\mu$ satisfying $\mu\leq \beta$. Clearly $V$ is a
 $B$-sub module of $\mathfrak{g}$ 
containing $\mathfrak{b}$. 

Then, we have 

\begin{lemma}
The $s_{\alpha}\cdot \beta$- weight space of   
$H^{1}(\tau , V)$  is a non-zero.  
\end{lemma}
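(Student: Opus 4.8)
The plan is to proceed by induction on $l(\tau)$ among those $\tau$ with $s_\alpha \le \tau$, using the descending/ascending $1$-step constructions together with the explicit structure of $V$. Since $s_\alpha \cdot \beta$ is the highest short root $\nu$ and $\langle \nu, \alpha \rangle = 0$ (from lemma(6.1)), while $\beta = s_\alpha \cdot \nu$ satisfies $\beta = \nu + \alpha - \langle \nu+\rho,\alpha\rangle\alpha$, one computes $\langle \beta, \alpha \rangle \le -2$; this is the key numerical input. First I would treat the base case. Choose a reduced expression $\tau = s_\alpha w$ with $l(\tau) = l(w)+1$ if possible, or more carefully: since $s_\alpha \le \tau$, write $\tau = s_\gamma \tau'$ with $l(\tau') = l(\tau)-1$ for a suitable simple $\gamma$, arranging things so that eventually one reduces to the single reflection $s_\alpha$. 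For $\tau = s_\alpha$: here $V$ as a $B_\alpha$-module contains the indecomposable summand $\mathfrak{g}_\beta \oplus \mathfrak{g}_{\beta - \alpha} \oplus \cdots$ generated over $sl_{2,\alpha}$ by $x_\beta$; because $\langle \beta, \alpha \rangle \le -2$, lemma(2.3(2)) (or the Demazure sequences, lemma(2.1)--(2.2)) shows $H^1(s_\alpha, \beta) = H^0(s_\alpha, s_\alpha \cdot \beta) = \mathbb{C}_{s_\alpha \cdot \beta}$ is non-zero, and this weight survives in $H^1(s_\alpha, V)$ since the relevant $B_\alpha$-summand of $V$ contributes exactly this $H^1$.

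Next, for the inductive step, suppose $l(\tau) > 1$ and write $\tau = s_\gamma \tau'$ with $l(\tau') = l(\tau) - 1$ and $s_\alpha \le \tau'$ (this is possible unless $\tau = s_\alpha$, handled above — one uses that $s_\alpha \le \tau$ and the exchange condition). By the inductive hypothesis the $s_\alpha\cdot\beta$-weight space of $H^1(\tau', V)$ is non-zero. I would then feed this through the {\it SES} attached to $g_1 : Z(\tau) \to P_\gamma/B$ with fibre $Z(\tau')$:
\[
(0) \to H^1(P_\gamma/B, {g_1}_* \mathcal{L}_{\tau'}(V)) \to H^1(\tau, V) \to H^0(P_\gamma/B, R^1{g_1}_* \mathcal{L}_{\tau'}(V)) \to (0),
\]
i.e. in the simplified notation, $H^1(\tau,V)$ receives $H^1(P_\gamma/B, H^0(\tau', V))$ as a submodule and surjects onto $H^0(P_\gamma/B, H^1(\tau', V))$. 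The cleanest route is to locate the weight $s_\alpha \cdot \beta$ inside $H^0(P_\gamma/B, H^1(\tau', V))$: take the indecomposable $B_\gamma$-summand $V''$ of $H^1(\tau', V)$ containing the $s_\alpha\cdot\beta$-weight line. One checks $\langle s_\alpha\cdot\beta, \gamma\rangle \ge 0$ for the $\gamma$ at hand (or handles the $=-1$ and $\le -2$ cases separately), so that by lemma(2.3(1)) the weight $s_\alpha \cdot \beta$ persists in $H^0(P_\gamma/B, V'')$, hence in $H^0(P_\gamma/B, H^1(\tau', V))$, hence in $H^1(\tau, V)$.

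The main obstacle I anticipate is controlling the sign of $\langle s_\alpha \cdot \beta, \gamma \rangle$ as $\gamma$ ranges over the simple roots that can appear in reduced words for such $\tau$: since $s_\alpha\cdot\beta = \nu$ is dominant, $\langle \nu, \gamma \rangle \ge 0$ for every simple $\gamma$, which is exactly what is needed to apply lemma(2.3(1)) and conclude persistence of the weight — so in fact dominance of the highest short root is precisely the structural fact that makes the induction go through, and there is no genuine obstacle once that observation is in hand. A secondary subtlety is ensuring one can always choose the descending simple reflection $\gamma$ so that $s_\alpha \le \tau'$ still holds; this follows from the subword characterization of the Bruhat order applied to a reduced word for $\tau$ that begins with $s_\alpha$ (such a reduced word exists since $s_\alpha \le \tau$), deleting a letter other than the initial $s_\alpha$. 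I would also remark that $H^0(\tau', V)$, by lemma(3.4), has only small indecomposable $B_\gamma$-summands, so the $H^1(P_\gamma/B, H^0(\tau', V))$ term contributes nothing that could cancel our weight — in any case cancellation is impossible since the {\it SES} is a genuine short exact sequence and the weight is produced in the quotient term.
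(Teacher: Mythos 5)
Your base case is fine, but the inductive step has a genuine gap at exactly the point you wave away. To conclude that the weight $s_{\alpha}\cdot\beta=\nu$ survives into $H^{0}(P_{\gamma}/B, H^{1}(\tau',V))$ you apply lemma(2.3(1)), whose hypothesis is $\langle \lambda,\gamma\rangle\geq 0$ for the \emph{character twist} $\lambda$ in the decomposition $V''\cong V'\otimes\mathbb{C}_{\lambda}$ of the indecomposable $B_{\gamma}$-summand containing the $\nu$-weight line --- not $\langle\nu,\gamma\rangle\geq 0$. Dominance of $\nu$ only forces $\langle\lambda,\gamma\rangle\geq 0$ when $\nu$ is the lowest weight of the $\gamma$-string of $V''$; if the summand reaches weights $\nu+\gamma$ or higher, one can have $\langle\lambda,\gamma\rangle=-1$ (all cohomology of $V''$ vanishes, so the weight dies in the quotient term of \emph{SES}) or $\langle\lambda,\gamma\rangle\leq -2$ (no $H^{0}$). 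Ruling this out requires structural control of the indecomposable $B_{\gamma}$-summands of $H^{1}(\tau',V)$, which in the paper is the content of a separate lemma (lemma(6.3)) proved by its own induction; your assertion that ``there is no genuine obstacle once dominance is in hand'' is therefore unjustified. A second, independent defect is the combinatorial reduction: it is not true that $s_{\alpha}\leq\tau$ yields a reduced word beginning with $s_{\alpha}$ (that would need $\tau^{-1}(\alpha)\in -R^{+}$), and it is not true that one can always pick a simple left descent $\gamma$ with $s_{\alpha}\leq s_{\gamma}\tau$: for $\tau=s_{\alpha}s_{\delta}$ with $\delta$ adjacent to $\alpha$ the only left descent is $\alpha$ and $s_{\delta}\not\geq s_{\alpha}$. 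Deleting a non-initial letter of a reduced word gives some $\tau'<\tau$ but not a factorization $\tau=s_{\gamma}\tau'$ with $\gamma$ simple, which is what the \emph{SES} requires; so the exceptional cases would have to be handled through the sub-term $H^{1}(s_{\gamma},H^{0}(\tau',V))$, and you give no argument there.

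Note also that the paper proves this lemma by a completely different, non-inductive route: since $s_{\alpha}\cdot\beta$ is the highest short root (hence dominant) and $\langle\beta,\alpha\rangle\leq -2$, Borel--Weil--Bott identifies the relevant $H^{1}$ on $G/B$, one has $H^{1}(s_{\alpha},\beta)\neq(0)$, and the surjectivity of restriction maps to Schubert varieties ([9, Proposition (4.2)] and [9, Corollary (4.3)]) factored through $X(\tau)$ via $s_{\alpha}\leq\tau$ produces the required non-zero weight space. If you want to salvage your bottom-up induction you would essentially have to prove the analogue of lemma(6.3) first; as written, the proposal does not close.
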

\begin{proof}

Since $s_{\alpha}\cdot \beta$ is the highest short root, 
$s_{\alpha}\cdot \beta$ is dominant character of $T$.  
Hence, by the Borel-Weil-Bott's theorem, 
$H^{1}(w_{0}, s_{\alpha}\cdot \beta)$ is an irreducible representation of $G$
with highest weight $s_{\alpha}\cdot \beta$.

On the otherhand, by [9, Proposition (4.2)], 
$H^{1}(s_{\alpha}, s_{\alpha}\cdot \beta)$ is non-zero. 

By [9, corollary (4.3)], the restriction map 

$H^{1}(w_{0}, s_{\alpha}\cdot \beta) \longrightarrow 
H^{1}(s_{\alpha}, s_{\alpha}\cdot \beta)$ is surjective. 

Thus, the restriction map $H^{1}(\tau,  s_{\alpha}\cdot \beta)
\longrightarrow H^{1}(s_{\alpha}, s_{\alpha}\cdot \beta)$ is also 
surjective.

This completes proof the lemma.

\end{proof}

Let $G$ be a simple, simply connected algebraic group over $\mathbb{C}$
which is not simply laced.

Let $V_{1}$ be a $B$-sub module of $\mathfrak{g}$ containing $\mathfrak{b}$.
Let $V_{2}$ be a $B$-sub module of $V_{1}$ containing $\mathfrak{b}$.
Let $\tau\in W$.

The natural projection $\Pi:V_{1} \longrightarrow 
V_{1}/V_{2}$ of $B$-modules induces a 
homomorphism of $B$-modules $\Pi_{\tau}:H^{0}(\tau, V_{1})
\longrightarrow H^{0}(\tau, V_{1}/V_{2})$ of $B$-modules.

We now deduce the following lemma as a consequence of the above lemma.

Let $\tau\in W$. Let $\gamma$ be a simple root. 
Let $V$ be a $B$-sub module of $\mathfrak{g}$ 
containing $\mathfrak{b}$. {\it Then, we have the following lemma on 
indecomposable sub modules of $H^{1}(\tau, V)$ similar to lemma(3.4) except  
that (2) and (5) are possible only if $\gamma$ is a short root }

\begin{lemma}
Every indecomposable $B_{\gamma}$- summand $V^{\prime}$ of 
$H^{1}(\tau , V)$ must be one of the following:
\begin{enumerate}
\item
$V^{\prime}=\mathbb{C}\cdot H$ for some $H\in \mathfrak{h}$ such that 
$\gamma(H)=0$. \\
\item  
$V^{\prime}=\mathbb{C}\cdot H \bigoplus \mathfrak{g}_{-\gamma}$ for some $H\in \mathfrak{h}$ such that $\gamma(H)=1$ and 
$\nu(H)=0$ for every simple root $\nu$ different from $\gamma$. \\
\item
$V^{\prime}=\mathfrak{g}_{\beta}$ for some short root $\beta$ 
different from $\gamma$. \\
\item  $V^{\prime}=\mathfrak{g}_{\beta} \bigoplus \mathfrak{g}_{\beta-\gamma}$ 
for some short root $\beta$. \\ 
\item
$V^{\prime}=sl_{2,\gamma}$, the three dimensional irreducible 
$L_{\gamma}$-module with highest weight $\gamma$.\\
\end{enumerate}
\end{lemma}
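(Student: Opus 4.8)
The plan is to mimic the proof of Lemma 3.4 closely, carrying along at each stage the extra bookkeeping needed because we are now dealing with a non–simply-laced group and with the \emph{first} cohomology module rather than the zeroth. As in Lemma 3.4 the argument is by induction on $l(\tau)$, using the short exact sequence \emph{SES} together with the ascending $1$-step construction $g_1\colon Z(\tau)\to P_\gamma/B$ with fibre $Z(s_\gamma\tau)$. The base case $l(\tau)=0$ gives $H^1(\tau,V)=(0)$, so there is nothing to prove. For the inductive step I would fix a simple root $\alpha$ with $l(\tau)=1+l(s_\alpha\tau)$ and analyse, just as in Lemma 3.4, the three cases $\gamma=\alpha$, $\gamma\neq\alpha$ with $\langle\gamma,\alpha\rangle\neq 0$, and $\langle\gamma,\alpha\rangle=0$, transferring information about indecomposable $B_\gamma$- and $B_\alpha$-summands back and forth through the functors $H^0(P_\alpha/B,-)$ and $H^1(P_\alpha/B,-)$ via Lemma 2.3 and Lemma 2.5.

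The first step is to establish the analogue of the weight-constraint part of Lemma 3.4: that every indecomposable $B_\gamma$-summand $V'$ of $H^1(\tau,V)$ is at most three-dimensional and, as a $B_\gamma$-module, is built out of a single $\gamma$-string. This follows from Lemma 2.7 applied to $\mathfrak g$ together with the observation (from Lemma 2.2 and the descending/ascending constructions) that $H^1(\tau,V)$ has a filtration whose subquotients are cohomology modules of root-weight line bundles, so that its indecomposable $B_\gamma$-summands are governed by the same combinatorics as those of $\mathfrak g$. If the $B_\gamma$-stable line in $V'$ has weight different from $-\gamma$, then $V'$ is of type (1), (3) or (4); here the refinement over Lemma 3.4 is that in cases (3) and (4) the root $\beta$ must be \emph{short}. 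This restriction is exactly where Lemma 6.2 enters: the only mechanism producing a two-dimensional summand $\mathfrak g_\beta\oplus\mathfrak g_{\beta-\gamma}$ in a first-cohomology module of a subbundle of $\mathcal T_{G/B}$ is the "jump'' coming from the $s_\alpha$-reflection attached to the highest short root, and by Lemma 6.1 the weight so produced, $s_\alpha\cdot\beta$, is short; longer roots cannot arise because $H^1$ of the corresponding line bundles on the one-step fibrations vanishes by Lemma 2.2(3) (whenever $\langle\beta',\gamma\rangle=-1$) or contributes only to $H^0$ (whenever $\langle\beta',\gamma\rangle\geq 0$).

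The second step treats the case where the $B_\gamma$-stable line of $V'$ has weight $-\gamma$, so that $\mathfrak g_{-\gamma}\subseteq V'$. Here I would argue, exactly as in Subcases 1 and 2 of Case 2 of Lemma 3.4, that $\mathfrak g_{-\gamma}$ is a \emph{proper} subspace of $V'$, so that $V'$ is either $\mathbb C\cdot H\oplus\mathfrak g_{-\gamma}$ (type (2)) or $sl_{2,\gamma}$ (type (5)); the element $H\in\mathfrak h$ is produced, as before, by combining the Cartan element $H_\gamma=-[x_{-\gamma},y_{-\gamma}]$ (using the relation $[x_{-\alpha},H_\gamma]=x_{-\alpha}$ when $\langle\gamma,\alpha\rangle=-1$) with a second Cartan vector coming from the inductive hypothesis applied to the simple root $\alpha$. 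The point now to be checked — and this is the place where the non-simply-laced hypothesis bites — is that both type (2) and type (5) can occur only when $\gamma$ is a \emph{short} root: if $\gamma$ is long, then $\mathfrak g_{-\gamma}$ cannot acquire a $B_\gamma$-partner inside $H^1(\tau,V)$, because the relevant $\gamma$-string in $\mathfrak g$ through a long root has length governed by $\langle\beta,\gamma\rangle\in\{-2,-3\}$ or $\{0\}$, and in every such case Lemma 2.3 forces the contribution to land in $H^0$ rather than $H^1$, or to vanish. I expect the main obstacle to be precisely this last verification: keeping track, through the inductive transfer across $P_\alpha/B$, of \emph{which} $\gamma$-strings can survive into $H^1$ when $\gamma$ is long versus short, and ruling out the spurious possibility of a long-root type-(4) summand. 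Once that dichotomy is pinned down the five cases assemble exactly as in Lemma 3.4, and the proof concludes by invoking \emph{SES} to push the classification from $H^0(P_\gamma/B,H^1(s_\gamma\tau,V))$ and $H^1(P_\gamma/B,H^0(s_\gamma\tau,V))$ up to $H^1(\tau,V)$.
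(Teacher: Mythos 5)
Your overall skeleton --- induction on $l(\tau)$, the sequence \emph{SES} attached to a simple root $\alpha$ with $l(\tau)=1+l(s_{\alpha}\tau)$, and Lemma 2.3 to transport indecomposable summands across $P_{\alpha}/B$ --- is the same as the paper's. But the content of this lemma beyond Lemma 3.4, namely that in types (3) and (4) the root $\beta$ must be \emph{short} (and, as the paper's induction explicitly uses, that types (2) and (5) can occur only when $\gamma$ is short, which is what gives $\langle \gamma,\alpha\rangle=-1$ and sends type (5) to type (2) under $H^{0}(s_{\alpha},-)$), is precisely what your proposal does not establish, and the justifications you offer for it do not work. First, Lemma 2.7 cannot be invoked: its proof rests on the simply-laced Lemma 2.6, in a non-simply-laced $\mathfrak{g}$ the $\gamma$-strings through long roots (for $\gamma$ short) have length $3$ or $4$, and in any case $H^{1}(\tau,V)$ is not a $B$-submodule of $\mathfrak{g}$, so a filtration with line-bundle-cohomology subquotients does not control its indecomposable $B_{\gamma}$-summands. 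Second, Lemmas 6.1 and 6.2 are not ingredients of the paper's proof of this classification; they produce one specific non-zero weight space of $H^{1}$ (the highest short root $s_{\alpha}\cdot\beta$), which is used to explain why the simply-laced statement fails, not to bound what can occur. Third, your dichotomy for long $\gamma$ is backwards: if $\gamma$ is long then $\langle\beta,\gamma\rangle\in\{-1,0,1\}$ for every root $\beta\neq\pm\gamma$; the values $-2,-3$ occur exactly when $\gamma$ is short and $\beta$ long, and that is the one mechanism by which Lemma 2.3(2) creates a genuinely new class in $H^{1}$, with weight $s_{\gamma}\cdot\beta=\beta-(\langle\beta,\gamma\rangle+1)\gamma$, which is then a short root.

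In the paper the refinement comes out of the induction itself: the quotient term $H^{0}(s_{\alpha},H^{1}(s_{\alpha}\tau,V))$ of \emph{SES} is handled by applying Lemma 2.3 to each of the five (already short-root-constrained) types of summands of $H^{1}(s_{\alpha}\tau,V)$, while the sub term $H^{1}(s_{\alpha},H^{0}(s_{\alpha}\tau,V))$ can only receive contributions from summands of $H^{0}(s_{\alpha}\tau,V)\subseteq\mathfrak{g}$ whose weights pair $\leq -2$ against $\alpha$, i.e.\ from long roots with $\alpha$ short, and these reappear with short-root weights of the form $s_{\alpha}\cdot\beta$; it is this bookkeeping, not a property of $\mathfrak{g}$ or the special pair of Lemma 6.1, that forces the short-root restrictions in (2)--(5). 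Since you explicitly defer exactly this verification (``I expect the main obstacle to be precisely this last verification''), the proposal as written reproduces the argument of Lemma 3.4 but leaves the new assertion of the present lemma unproved; that is a genuine gap, not a routine detail.
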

\begin{proof}

Proof is by induction on $l(\tau)$.

If $l(\tau)=0$, then, $\tau=id$ and so we are done.

Otherwise, choose a simple root $\alpha$ such that 
$l(\tau)=1+l(s_{\alpha}\tau)$.

By induction, every indecomposable $B_{\gamma}$- summand $V^{\prime}$ of 
$H^{1}(s_{\alpha}\tau, V)$ must be one of the above mentioned 5 types.

We first assume that $\gamma=\alpha$. 

Now, using lemma(2.3(1)), we see that if $V^{\prime}$ is one of 
the types (1), (4), and (5), then, $H^{0}(s_{\gamma}, V^{\prime})$ must be 
of the same type in $H^{0}(s_{\gamma}, H^{1}(s_{\gamma}\tau, V))$.
In $V^{\prime}$ is of type (2), using lemma(2.3(3)), we see that 
$H^{0}(s_{\gamma}, V^{\prime})=(0)$.

In type (3), using lemma(2.3), we see that $H^{0}(s_{\gamma}, V^{\prime})$ is 
either zero or is one of the types (3) or (4).

We may therefore assume that $\alpha\neq\gamma$.

In this case, we use lemma(2.3) to see that if $V^{\prime}$ is of type 
different from type (5), then  $H^{0}(s_{\alpha}, V^{\prime})$ must be of the 
same type in $H^{0}(s_{\alpha}, H^{1}(s_{\alpha}\tau, V))$.

If $V^{\prime}$ is of type (5), we again use lemma (2.3) to conclude 
that $H^{0}(s_{\alpha}, V^{\prime})$ must be of type (2) in
$H^{0}(s_{\alpha}, H^{1}(s_{\alpha}\tau, V))$.

(Here, we use the induction hypothesis that $V^{\prime}$ can be of type(5) 
only if $\gamma$ is a short root. Hence, we have
$\langle \gamma , \alpha \rangle =-1$)

We recall {\it SES}: 

$$
(0) \longrightarrow H^{1}(s_{\gamma}, H^{0}(s_{\gamma}\tau, V) ) 
\longrightarrow H^{1}(\tau , V) \longrightarrow
H^{0}(s_{\gamma} , H^{1}(s_{\gamma}\tau, V )) \longrightarrow (0).$$

This completes the proof for the case when $\alpha=\gamma$.

We may therefore assume that $\alpha\neq\gamma$.

In this case, we use lemma(2.3) to see that if $V^{\prime}$ is of type 
different from type (5), then  $H^{0}(s_{\alpha}, V^{\prime})$ must be of the 
same type in $H^{0}(s_{\alpha}, H^{1}(s_{\alpha}\tau, V))$.

If $V^{\prime}$ is of type (5), we again use lemma (2.3) to conclude 
that $H^{0}(s_{\alpha}, V^{\prime})$ must be of type (2) in
$H^{0}(s_{\alpha}, H^{1}(s_{\alpha}\tau, V))$.

(This is because $\langle \gamma , \alpha \rangle =-1$)

Proof the lemma is completed using {\it SES}.

\end{proof}

For any $B$ module $V$, and for any $\tau\in W$, 
we recall the evaluation map $ev:H^{0}(\tau,V)\longrightarrow V$
by $ev(s)=s(idB)$, the evaluation of the section at the identity
coset $idB$ as point in $X(\tau)$.  

Then, we have

\begin{lemma}
Let $V$ be a $B$-sub module of $\mathfrak{g}$ containing $\mathfrak{b}$.
Then, we have 
\begin{enumerate}
\item
The evaluation map 
$ev:H^{0}(\tau, V)\longrightarrow V$ is injective.\\
\item
$H^{i}(\tau, V)$ is zero for all $i\geq 2$.\\
\end{enumerate}
\end{lemma}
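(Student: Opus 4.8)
Both statements will be proved at once, by induction on $l(\tau)$. If $l(\tau)=0$, then $\tau=id$, the map $ev:H^{0}(id,V)=V\to V$ is the identity, and $H^{i}(id,V)=(0)$ for $i\geq 1$, so (1) and (2) hold. Assume $l(\tau)\geq 1$ and choose a simple root $\gamma$ with $l(s_{\gamma}\tau)=l(\tau)-1$; put $w=s_{\gamma}\tau$. The inductive hypothesis applies to $w$ with the same module $V$, and the ascending $1$-step construction gives the exact sequence \emph{SES}
$$0\longrightarrow H^{1}(P_{\gamma}/B,H^{i-1}(w,V))\longrightarrow H^{i}(\tau,V)\longrightarrow H^{0}(P_{\gamma}/B,H^{i}(w,V))\longrightarrow 0$$
for every $i\geq 0$; this is the engine of both parts.

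\textbf{Part (1).} This follows by the argument of Lemma 3.3, which uses only Lemma 2.5(1) (equivalently Lemma 3.2) and is insensitive to the simply-laced hypothesis. By induction $ev:H^{0}(w,V)\to V$ is injective, so $H^{0}(w,V)$ is, via $ev$, a $B$-submodule of the $P_{\gamma}$-module $\mathfrak{g}$. Since $ev:H^{0}(P_{\gamma}/B,\mathfrak{g})\to\mathfrak{g}$ is an isomorphism by Lemma 2.5(1) and $H^{0}(P_{\gamma}/B,-)$ is left exact, naturality of $ev$ forces $ev:H^{0}(P_{\gamma}/B,H^{0}(w,V))\to H^{0}(w,V)$ to be injective. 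The case $i=0$ of \emph{SES} identifies $H^{0}(\tau,V)$ with $H^{0}(P_{\gamma}/B,H^{0}(w,V))$, and $ev:H^{0}(\tau,V)\to V$ is the composite of these two injective evaluation maps, hence injective.

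\textbf{Part (2).} By induction $H^{j}(w,V)=(0)$ for every $j\geq 2$. For $i\geq 3$ both outer terms of \emph{SES} vanish (the left one since $H^{i-1}(w,V)=(0)$, the right one since $H^{i}(w,V)=(0)$), so $H^{i}(\tau,V)=(0)$. For $i=2$ the right-hand term $H^{0}(P_{\gamma}/B,H^{2}(w,V))$ vanishes, so \emph{SES} yields an isomorphism $H^{2}(\tau,V)\cong H^{1}(P_{\gamma}/B,H^{1}(w,V))$, and it remains to see this last group is zero. Decompose $H^{1}(w,V)$ into indecomposable $B_{\gamma}$-summands; by Lemma 6.4 every such summand $V'$ is of one of the five listed types, and a direct computation with Lemma 2.3 shows $H^{1}(P_{\gamma}/B,V')=(0)$ in each case. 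Writing $V'=V''\otimes\mathbb{C}_{\lambda}$ with $V''$ an irreducible $L_{\gamma}$-module, the point is that in every one of the five cases $\langle\lambda,\gamma\rangle\geq -1$: this makes Lemma 2.3(1) (if $\langle\lambda,\gamma\rangle\geq 0$) or Lemma 2.3(3) (if $\langle\lambda,\gamma\rangle=-1$) give the vanishing; for the short-root types (2) and (5) the relevant description gives $\langle\lambda,\gamma\rangle=-1$, respectively $\lambda=0$. Hence $H^{1}(P_{\gamma}/B,H^{1}(w,V))=(0)$, so $H^{2}(\tau,V)=(0)$, which closes the induction.

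\textbf{Where the difficulty lies.} The substance is entirely in the case $i=2$, and specifically in checking the five types of Lemma 6.4 against Lemma 2.3. The genuinely new feature compared with the simply-laced Lemma 3.5 is the presence of the types (2) and (5), which occur precisely when $\gamma$ is a short root; these are also the reason the conclusion holds only for $i\geq 2$ and cannot be improved to $i\geq 1$, since by Lemma 6.2 the group $H^{1}(\tau,V)$ is in general nonzero. One must also make sure that the type-(4) summands actually produced along the induction of Lemma 6.4 pair non-negatively with $\gamma$ (which is how that type is generated there, via Lemma 2.3(1)), for otherwise $H^{1}(P_{\gamma}/B,V')$ need not vanish.
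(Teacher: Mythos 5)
Your proposal is correct and takes essentially the same route as the paper: part (1) by repeating the argument of Lemma 3.3, and part (2) by induction on $l(\tau)$ using \emph{SES}, where the only nontrivial case $i=2$ reduces to the vanishing of $H^{1}(P_{\gamma}/B, H^{1}(s_{\gamma}\tau, V))$, obtained exactly as in the paper from the five-type classification of indecomposable $B_{\gamma}$-summands of $H^{1}(s_{\gamma}\tau, V)$ together with Lemma 2.3. The only blemish is a citation slip: the structural lemma giving the five types is Lemma 6.3 (the statement you are proving is itself Lemma 6.4), and, like the paper, you take that classification as given.
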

\begin{proof}

Proof of (1) is similar to that of lemma(3.3).

{\it Proof of (2):}

Proof is by induction on $l(\tau)$.

If $l(\tau)=0$, we are done.

Otherwise, choose a simple root $\gamma\in S$ be such that 
$l(s_{\gamma}\tau)=l(\tau)-1$.

By lemma(6.3), every indecomposable $B_{\gamma}$- summand $V^{\prime}$ 
of $H^{1}(s_{\gamma}\tau , V)$ must be one of the 5 types given 
in lemma(6.3).    

Hence, using lemma(2.3), we conclude that 
$H^{i}(P_{\gamma}/B, V^{\prime})$ is zero  
for every indecomposable $B_{\gamma}$-summand $V^{\prime}$ of  
$H^{1}(s_{\gamma}\tau , V))$ and for every $i\geq 1$.

Hence, we have 
$H^{i}(P_{\gamma}/B, H^{1}(s_{\gamma}\tau , V))=(0)$ 
and for every $i\geq 1$.

Since $dim(P_{\gamma}/B)=1$, 
$H^{i}(P_{\gamma}/B , H^{0}(s_{\gamma}\tau , V))=(0)$ for every 
$i\geq 2$.

Thus, we have shown that 

{\it Observation 1}
\begin{enumerate}
\item
$H^{i}(P_{\gamma}/B, H^{1}(s_{\gamma}\tau , V))=(0)$ 
for all $i\geq 1$.
\item
$H^{i}(P_{\gamma}/B , H^{0}(s_{\gamma}\tau , V))=(0)$ for every 
$i\geq 2$.
\end{enumerate}

By induction on $l(\tau)$, we have 

{\it Observation 2}

$H^{i}(s_{\gamma}\tau , V)$ is zero 
for all $i\geq 2$.

Now, using {\it Observation 1}, we conclude that 
$H^{i}(\tau, V)$ is zero for all $i\geq 2$.

We recall {\it SES}: 
For every $i\geq 1$, we have:

$$
(0) \longrightarrow H^{1}(s_{\gamma}, H^{i-1}(s_{\gamma}\tau, V)) 
\longrightarrow H^{i}(\tau , V) \longrightarrow
H^{0}(s_{\gamma} , H^{i}(s_{\gamma}\tau, V )) \longrightarrow (0).$$

Using {\it Observation 1} and {\it Observation 2} in the above short 
exact sequence, we conclude that $H^{i}(\tau , V)=(0)$ for every 
$i\geq 2$. 

This completes the proof of (2).

\end{proof}

We have the following corollary as an application of the lemma.

\begin{corollary}
Let $\tau\in W$. Let $\alpha$ be a positive root.
Then, $H^{i}(\tau, \alpha)=(0)$ for every $i\geq 2$.
\end{corollary}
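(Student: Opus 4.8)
The plan is to imitate the argument used for the simply laced case (Corollary 3.4 of the excerpt), now invoking Lemma 6.4 in place of Lemma 3.5. First I would set up the same filtration: given a positive root $\alpha$, put $V_{1}:=\bigoplus_{\mu\leq \alpha}\mathfrak{g}_{\mu}$ and $V_{2}:=\bigoplus_{\mu<\alpha}\mathfrak{g}_{\mu}$, so that $V_{2}$ is a $B$-submodule of $\mathfrak{g}$ containing $\mathfrak{b}$, $V_{1}$ is a $B$-submodule of $\mathfrak{g}$ containing $V_{2}$, and the quotient $V_{1}/V_{2}$ is one-dimensional and $T$-equivariantly isomorphic to $\mathfrak{g}_{\alpha}$, hence isomorphic as a $B$-module to $\mathbb{C}_{\alpha}$. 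Thus $H^{i}(\tau,\alpha)=H^{i}(\tau,\mathfrak{g}_{\alpha})=H^{i}(\tau,V_{1}/V_{2})$ for all $i$.

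Next I would run the long exact cohomology sequence attached to $(0)\to V_{2}\to V_{1}\to V_{1}/V_{2}\to (0)$, namely the piece
$$\cdots \to H^{i}(\tau,V_{1}) \to H^{i}(\tau,V_{1}/V_{2}) \to H^{i+1}(\tau,V_{2}) \to \cdots$$
By Lemma 6.4(2), applied to the $B$-submodules $V_{1}$ and $V_{2}$ of $\mathfrak{g}$ (both of which contain $\mathfrak{b}$), we have $H^{i}(\tau,V_{1})=0$ for all $i\geq 2$ and $H^{i+1}(\tau,V_{2})=0$ for all $i\geq 1$, i.e. for all $i$ with $i+1\geq 2$. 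Hence the middle term $H^{i}(\tau,V_{1}/V_{2})$ is squeezed to zero for every $i\geq 2$, which gives $H^{i}(\tau,\alpha)=0$ for every $i\geq 2$, as claimed. This mirrors exactly the structure of the proof of Corollary 3.7 (there called corollary(3.4) in the text) in the simply laced setting, with the bound $i\geq 1$ replaced by $i\geq 2$.

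There is essentially no obstacle here: the entire content has already been placed in Lemma 6.4(2), and the corollary is a formal consequence obtained by peeling off a single one-dimensional weight space of $\mathfrak{g}$ via a two-step filtration. The only point requiring a word of care is the verification that both $V_{1}$ and $V_{2}$ contain $\mathfrak{b}$ (so that Lemma 6.4 applies): this holds because every nonpositive root $\mu$ satisfies $\mu\leq \beta$ for any positive root $\beta$, and in particular $\mu\leq \alpha$ and, for $\mu$ negative or zero weight, $\mu<\alpha$ as well, so $\mathfrak{b}=\mathfrak{h}\oplus\bigoplus_{\mu<0}\mathfrak{g}_{\mu}\subseteq V_{2}\subseteq V_{1}$. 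With that observed, the proof is just the two displayed lines above.
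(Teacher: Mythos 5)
Your proposal is correct and follows essentially the same route as the paper: the same two-step filtration $V_{2}\subset V_{1}$ of weight spaces $\leq\alpha$, the identification $V_{1}/V_{2}\cong\mathbb{C}_{\alpha}$, and the long exact sequence combined with Lemma 6.4(2) to squeeze the middle term to zero for $i\geq 2$. Your extra remark verifying $\mathfrak{b}\subseteq V_{2}\subseteq V_{1}$ is a point the paper simply declares clear, so there is no substantive difference.
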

\begin{proof}
 Let $V_{1}:=\bigoplus_{\mu\leq \alpha}\mathfrak{g}_{\mu}$ denote the 
direct sum of the weight spaces of $\mathfrak{g}$ of weights $\mu$ satisfying
$\mu\leq \alpha$. 

Let $V_{2}:=\bigoplus_{\mu < \alpha}\mathfrak{g}_{\mu}$ denote the 
direct sum of the weight spaces of $\mathfrak{g}$ of weights $\mu$ satisfying
$\mu < \alpha$. 

It is clear that $V_{2}$ is a $B$-sub module of $\mathfrak{g}$ containing $\mathfrak{b}$ and $V_{1}$ is a $B$-sub module of $\mathfrak{g}$ containing $V_{2}$.

Since $\mathfrak{g}_{\alpha}$ is one dimensional and is isomorphic to
the quotient $V_{1}/V_{2}$, we have 

{\it Observation 1}

$H^{i}(\tau, \alpha)= H^{i}(\tau, \mathfrak{g}_{\alpha})=
H^{i}(\tau, V_{1}/V_{2})$ for every $i\geq 2$.

We have the short exact sequence    
 $(0)\longrightarrow V_{2} \longrightarrow V_{1} \longrightarrow 
V_{1}/V_{2} \longrightarrow (0)$ of $B$- modules.

Applying $H^{i}(\tau, -)$ to this short
exact sequence of $B$-modules,  we obtain 
the following long exact sequence of $B$-modules: 

{\it Observation 2}

$$\cdots H^{i}(\tau, V_{2}) \longrightarrow H^{i}(\tau, V_{1}) 
\longrightarrow H^{i}(\tau, V_{1}/V_{2})\longrightarrow 
H^{i+1}(\tau, V_{2}) \cdots$$

By lemma(6.4), $H^{i}(\tau, V_{1})$ and  
$H^{i+1}(\tau, V_{2})$ are all zero for every $i\geq 2$.
Using {\it Observation 2}, we conclude that $H^{i}(\tau, V_{1}/V_{2})=(0)$
for every $i\geq 2$. 

Proof of corollary follows by applying {\it Observation 1}.

\end{proof}

We now prove theorem B.

\begin{theorem}

Let $G$ be a simple, simply connected but not simply laced algebraic group 
over $\mathbb{C}$. 
Let $\tau \in W$. Then, we have
\begin{enumerate}
\item
$H^{i}(X(\tau), \mathcal{T}_{G/B})=(0)$ for every 
$i\geq 1$. 
\item
The adjoint representation $\mathfrak{g}$ is a $B$-submodule of 
$H^{0}(X(\tau) , \mathcal{T}_{G/B})$  if and 
only if the set of semi-stable points 
$X(\tau^{-1})_{T}^{ss}(\mathcal{L}_{\alpha_{0}})$ is non-empty.
\end{enumerate}
\end{theorem}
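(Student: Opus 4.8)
The plan is to run the argument of Theorem~3.8 essentially verbatim, feeding in the non-simply-laced analogues Lemma~6.4 and Corollary~6.5 in place of Lemmas~3.5--3.7, and supplying two extra inputs to compensate for the fact that now $H^1(\tau,\mathfrak{b})$ need not vanish. As there, I would identify $\mathcal{T}_{G/B}$ with $\mathcal{L}(\mathfrak{g}/\mathfrak{b})$, so that it suffices to compute $H^i(\tau,\mathfrak{g}/\mathfrak{b})$, and use the short exact sequence $(0)\to\mathfrak{b}\to\mathfrak{g}\to\mathfrak{g}/\mathfrak{b}\to(0)$ of $B$-modules with its long exact sequence for the functors $H^i(\tau,-)$. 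The first extra input is that $H^i(\tau,\mathfrak{g})=(0)$ for all $i\geq 1$ and $H^0(\tau,\mathfrak{g})=\mathfrak{g}$: since $\mathfrak{g}$ is a $G$-module, the homogeneous bundle $\mathcal{L}(\mathfrak{g})$ on $G/B$ is the trivial bundle $\mathcal{O}_{G/B}\otimes\mathfrak{g}$, so its restriction to $X(\tau)$ is trivial and $H^i(\tau,\mathfrak{g})=H^i(X(\tau),\mathcal{O}_{X(\tau)})\otimes\mathfrak{g}$, which is $\mathfrak{g}$ for $i=0$ and $(0)$ for $i\geq 1$ (using $R{\phi_\tau}_*\mathcal{O}_{Z(\tau)}=\mathcal{O}_{X(\tau)}$ and the vanishing of the higher cohomology of $\mathcal{O}$ on Schubert varieties). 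Combining this with $H^i(\tau,\mathfrak{b})=(0)$ for $i\geq 2$ (Lemma~6.4(2), applied to $V=\mathfrak{b}$) in the long exact sequence yields $H^i(\tau,\mathfrak{g}/\mathfrak{b})=(0)$ for every $i\geq 1$, which is part~(1).

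For part~(2), the same long exact sequence, together with $H^1(\tau,\mathfrak{g})=(0)$, gives the four-term exact sequence $(0)\to H^0(\tau,\mathfrak{b})\to H^0(\tau,\mathfrak{g})\stackrel{\Pi_\tau}{\longrightarrow}H^0(\tau,\mathfrak{g}/\mathfrak{b})\to H^1(\tau,\mathfrak{b})\to(0)$ of $B$-modules. By Lemma~6.4(1) the evaluation map $ev\colon H^0(\tau,\mathfrak{g})\to\mathfrak{g}$ is injective, and by the triviality above it is also surjective, hence a $B$-isomorphism; so $\Pi_\tau$ becomes a $B$-map $\mathfrak{g}\to H^0(\tau,\mathfrak{g}/\mathfrak{b})$ with kernel $H^0(\tau,\mathfrak{b})$ and cokernel $H^1(\tau,\mathfrak{b})$. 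Exactly as in Theorem~3.8, $H^0(\tau,\mathfrak{b})$ is a $B$-submodule of $\mathfrak{b}$ via $ev$, the unique $B$-stable line of $\mathfrak{b}$ (indeed of $\mathfrak{g}$) is $\mathfrak{g}_{-\alpha_0}$, and hence $H^0(\tau,\mathfrak{b})\neq(0)$ if and only if $-\alpha_0\in\tau(R^-)$, i.e.\ $\tau^{-1}(-\alpha_0)\notin R^+$, i.e.\ (by [10, Lemma~2.1]) if and only if $X(\tau^{-1})_T^{ss}(\mathcal{L}_{\alpha_0})=\emptyset$. Consequently, if $X(\tau^{-1})_T^{ss}(\mathcal{L}_{\alpha_0})\neq\emptyset$ then $H^0(\tau,\mathfrak{b})=(0)$, so $\Pi_\tau$ is injective and $\mathfrak{g}\cong H^0(\tau,\mathfrak{g})$ is a $B$-submodule of $H^0(\tau,\mathfrak{g}/\mathfrak{b})$.

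It remains to prove the converse, and this is the step special to the non-simply-laced case (where $\Pi_\tau$ need not be surjective). Assume $X(\tau^{-1})_T^{ss}(\mathcal{L}_{\alpha_0})=\emptyset$; then $H^0(\tau,\mathfrak{b})\neq(0)$, so it contains $\mathfrak{g}_{-\alpha_0}$, and since $-\alpha_0$ occurs in $\mathfrak{g}$ with multiplicity one, the quotient $\mathfrak{g}/H^0(\tau,\mathfrak{b})\cong\mathrm{im}\,\Pi_\tau$ has no $(-\alpha_0)$-weight vector. Suppose for contradiction that $\mathfrak{g}$ is a $B$-submodule of $H^0(\tau,\mathfrak{g}/\mathfrak{b})$. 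The socle of $\mathfrak{g}$ is the line $\mathfrak{g}_{-\alpha_0}$, so its image is a $B$-stable line of weight $-\alpha_0$ inside $H^0(\tau,\mathfrak{g}/\mathfrak{b})$; this nonzero $(-\alpha_0)$-weight vector cannot lie in $\mathrm{im}\,\Pi_\tau$, hence it maps to a nonzero $(-\alpha_0)$-weight vector of the cokernel $H^1(\tau,\mathfrak{b})$. But by Lemma~6.3 every indecomposable $B_\gamma$-summand of $H^1(\tau,\mathfrak{b})$ (for every simple $\gamma$) is of one of the five listed types, and none of these has $-\alpha_0$ among its weights: $-\alpha_0$ is a long, non-simple root, so it is not $0$, not $-\gamma$, not a short root, and not of the form $\beta-\gamma$ with $\beta$ short and $\langle\beta,\gamma\rangle=1$ (such $\beta-\gamma=s_\gamma\beta$ would again be short). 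This contradiction shows $\mathfrak{g}$ is not a $B$-submodule of $H^0(\tau,\mathfrak{g}/\mathfrak{b})$, which proves the converse and completes part~(2).

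I expect the main obstacle to be exactly this converse: isolating and verifying that $-\alpha_0$ can never occur as a weight of $H^1(\tau,\mathfrak{b})$, which is the essential use of Lemma~6.3 and of the long/short dichotomy, together with carefully justifying the auxiliary equivalence $H^0(\tau,\mathfrak{b})=(0)\iff-\alpha_0\in\tau(R^+)$ and the triviality of $\mathcal{L}(\mathfrak{g})$ that promotes the vanishing of Lemma~6.4 from $i\geq 2$ to $i\geq 1$ for the module $\mathfrak{g}$ itself.
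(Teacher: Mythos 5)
Your part (1) and the ``if'' direction of part (2) are essentially the paper's own proof: the identification $\mathcal{T}_{G/B}=\mathcal{L}(\mathfrak{g}/\mathfrak{b})$, the long exact sequence of $0\to\mathfrak{b}\to\mathfrak{g}\to\mathfrak{g}/\mathfrak{b}\to 0$ with $H^{i}(\tau,\mathfrak{g})=0$ for $i\geq 1$ and Lemma 6.4(2) for $\mathfrak{b}$, and the identification $\ker\Pi_{\tau}=H^{0}(\tau,\mathfrak{b})$, whose nonvanishing is equivalent to $-\alpha_{0}\in\tau(R^{-})$ and hence, via [10, Lemma 2.1], to emptiness of $X(\tau^{-1})_{T}^{ss}(\mathcal{L}_{\alpha_{0}})$. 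Where you genuinely diverge is the ``only if'' direction: the paper simply declares it sufficient to show $\ker\Pi_{\tau}=0$ iff the semistable locus is nonempty, which settles the matter only when $\Pi_{\tau}$ is surjective (as in the simply laced Theorem 3.8); in the non-simply-laced case the cokernel is $H^{1}(\tau,\mathfrak{b})$, which can be nonzero (the paper's own $B_{2}$ remark), and the paper gives no argument that a copy of $\mathfrak{g}$ cannot sit inside $H^{0}(\tau,\mathfrak{g}/\mathfrak{b})$ without factoring through $\Pi_{\tau}$. Your socle argument --- the image of the unique $B$-stable line $\mathfrak{g}_{-\alpha_{0}}$ would produce a $(-\alpha_{0})$-weight vector that must inject into $\operatorname{coker}\Pi_{\tau}=H^{1}(\tau,\mathfrak{b})$ --- is exactly the kind of supplement this direction needs, and is not in the paper.

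However, the step where you rule out $-\alpha_{0}$ as a weight of $H^{1}(\tau,\mathfrak{b})$ has a gap as you have written it. Lemma 6.3, type (4), only says $V'=\mathfrak{g}_{\beta}\oplus\mathfrak{g}_{\beta-\gamma}$ ``for some short root $\beta$''; it does not assert $\langle\beta,\gamma\rangle=1$, and that condition (which you import from the simply laced Lemma 3.4 to conclude $\beta-\gamma=s_{\gamma}\beta$ is again short) is not automatic in the non-simply-laced case. Indeed, two-dimensional indecomposable $B_{\gamma}$-modules of the shape $\mathfrak{g}_{\beta}\oplus\mathfrak{g}_{\beta-\gamma}$ with $\beta$ short, $\langle\beta,\gamma\rangle=0$ and $\beta-\gamma=-\alpha_{0}$ do exist and do occur in these computations: in type $B_{2}$, with $\gamma=\alpha_{2}$ and $\beta=-(\alpha_{1}+\alpha_{2})$, the module $\mathfrak{g}_{-(\alpha_{1}+\alpha_{2})}\oplus\mathfrak{g}_{-\alpha_{0}}$ is an indecomposable $B_{\alpha_{2}}$-summand of $H^{0}(s_{\alpha_{1}},\mathfrak{b})$ (it happens to have vanishing cohomology on $P_{\alpha_{2}}/B$, but the literal statement of Lemma 6.3(4) does not exclude such summands, hence does not exclude the weight $-\alpha_{0}$, from $H^{1}(\tau,\mathfrak{b})$). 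So to complete your converse you must either justify the sharper form of Lemma 6.3(4) (with $\langle\beta,\gamma\rangle=1$, i.e.\ $\beta-\gamma$ short, which is plausibly what the author intends by ``similar to Lemma 3.4''), or prove directly that the $(-\alpha_{0})$-weight space of $H^{1}(\tau,\mathfrak{b})$ vanishes when $X(\tau^{-1})_{T}^{ss}(\mathcal{L}_{\alpha_{0}})=\emptyset$. Everything else in your proposal is sound and, on this converse direction, more careful than the paper itself.
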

\begin{proof}
Proof is similar to that of theorem A.

We provide a proof here for completeness.

Since the tangent space of $G/B$ at the point $idB$ is 
$\mathfrak{g}/\mathfrak{b}$, the tangent bundle $\mathcal{T}_{G/B}$ 
is the homogeneous vector bundle $\mathcal{L}(\mathfrak{g}/\mathfrak{b})$ 
on $G/B$ associated to the $B$-module $\mathfrak{g}/\mathfrak{b}$.

Hence, it is sufficient to prove the following:

\begin{enumerate}
\item
$H^{i}(\tau, \mathfrak{g}/\mathfrak{b})=(0)$ for every $i\geq 1$. \\
\item
The adjoint representation $\mathfrak{g}$ of $G$ is a $B$-sub module of 
$H^{0}(\tau , \mathfrak{g}/\mathfrak{b})$ if and only if 
the set of semi stable points $X(\tau^{-1})_{T}^{ss}(\mathcal{L}_{\alpha_{0}})$ 
is non-empty. \\
\end{enumerate}

We prove this now.

Let $V_{1}:\mathfrak{g}$ and let $V_{2}:=\mathfrak{b}$.
The natural projection $\Pi:\mathfrak{g} \longrightarrow 
\mathfrak{g}/\mathfrak{b}$ of $B$-modules induces a 
homomorphism $\Pi_{\tau}:H^{0}(\tau, \mathfrak{g})
\longrightarrow H^{0}(\tau, \mathfrak{g}/\mathfrak{b})$ of $B$-modules.

{\it Proof of (1):} 

We have the short exact sequence    
 $(0)\longrightarrow \mathfrak{b} \longrightarrow \mathfrak{g} \longrightarrow 
\mathfrak{g}/\mathfrak{b} \longrightarrow (0)$ of $B$- modules.

Applying $H^{i}(\tau, -)$ to this short
exact sequence of $B$-modules,  we obtain 
the following long exact sequence of $B$-modules: 

{\it Observation 1}

$$\cdots H^{i}(\tau, \mathfrak{b}) \longrightarrow H^{i}(\tau, \mathfrak{g}) 
\longrightarrow H^{i}(\tau, \mathfrak{g}/\mathfrak{b})\longrightarrow 
H^{i+1}(\tau, \mathfrak{b}) \cdots$$

Onh the otherhand, by lemma(2.5(2)), we have 
$H^{i}(\tau, \mathfrak{g})=(0)$ for every $i\geq 1$. Further, by lemma(6.4), 
we have $H^{i+1}(\tau, \mathfrak{b})=(0)$ for every $i\geq 1$.
Applying this in the above long exact sequence of $B$-modules, we conclude 
that $H^{i}(\tau, \mathfrak{g}/\mathfrak{b})=(0)$ for every $i\geq 1$. 

This proves (1).

{\it Proof of 2:}

Since $H^{0}(\tau, \mathfrak{g})=\mathfrak{g}$, in order to prove (2),
it is sufficient to prove that the kerenel of the linear 
map $\Pi_{\tau}:H^{0}(\tau, \mathfrak{g}) \longrightarrow 
H^{0}(\tau, \mathfrak{g}/\mathfrak{b})$ is zero if and only if 
$X(\tau^{-1})_{T}^{ss}(\mathcal{L}_{\alpha_{0}})$ is non-empty.

We now show that the kerenel of the linear map 
$\Pi_{\tau}:H^{0}(\tau, \mathfrak{g}) \longrightarrow 
H^{0}(\tau, \mathfrak{g}/\mathfrak{b})$ is zero if and only 
if $X(\tau^{-1})_{T}^{ss}(\mathcal{L}_{\alpha_{0}})$ is non-empty.

By [10, lemma(2.1)], $X(\tau^{-1})_{T}^{ss}(\mathcal{L}_{\alpha_{0}})$ is 
non empty if and only if $\tau^{-1}(-\alpha_{0})\in R^{+}$.

Hence, we have 

{\it Observation 2}
$X(\tau^{-1})_{T}^{ss}(\mathcal{L}_{\alpha_{0}})$ is non empty
if and only if $-\alpha_{0}\in \tau(R^{+})$.

It is easy to see that  
$Ker(\Pi_{\tau})=H^{0}(\tau, \mathfrak{b})$. Hence, using lemma(6.4(1)), 
we see that $Ker(\Pi_{\tau})$ is a $B$-submodule of $\mathfrak{b}$. 

Since there is a unique $B$- stable line in $\mathfrak{b}$ and that is of 
weight $-\alpha_{0}$, we conclude that $Ker(P_{\tau})$ is a non-zero 
$B$-submodule of $\mathfrak{b}$ if and only if the $-\alpha_{0}$-weight 
space of  $H^{0}(\tau, \mathfrak{b})$ is non zero.

Hence, $Ker(P_{\tau})$ is non-zero if and only if 
$-\alpha_{0}\in \tau(R^{-})$.

Reformulating this statement, we have:

$Ker(P_{\tau})$ is zero if and only if 
$-\alpha_{0}\in \tau(R^{+})$.

Using {\it Observation 2}, we see that 
$Ker(P_{\tau})$ is zero if and only 
if the set of semi-stable points 
$X(\tau^{-1})_{T}^{ss}(\mathcal{L}_{\alpha_{0}})$ is non-empty.

This completes the proof of  (2).

\end{proof}

{\bf Remark:}
The second statement of theorem A does not hold for an arbitrary $\tau$ 
in case of $G$ is not simply laced. 

{\it Reason:}

For instance, let $G$ be of type $B_{2}$.
Let $\alpha_{1}$ and $\alpha_{2}$ be two simple roots such that  
$\langle \alpha_{1} , \alpha_{2} \rangle =-2$ and 
$\langle \alpha_{2} , \alpha_{1} \rangle =-1$.

We can take $\tau=s_{\alpha_{1}}s_{\alpha_{2}}s_{\alpha_{1}}$.
We see that $H^{1}(\tau, \mathfrak{b})$ is one dimensional representation 
$\mathbb{C}_{-(\alpha_{1}+\alpha_{2})}$ of $B$.

\end{document}